\let\span\relax
\DeclareMathOperator{\span}{span}
\begin{document}

\newcommand{\sgn}{\operatorname{sgn}}

\newtheorem{theorem}{Theorem}[section]
\newtheorem{lemma}[theorem]{Lemma}
\newtheorem{proposition}[theorem]{Proposition}
\newtheorem{corollary}[theorem]{Corollary}
\newtheorem{definition}[theorem]{Definition}
\newtheorem{example}[theorem]{Example}
\newtheorem{remark}[theorem]{Remark}

\def\a{\alpha}
\def\b{\beta}
\def\d{\delta}
\def\g{\gamma}
\def\l{\lambda}
\def\o{\omega}
\def\s{\sigma}
\def\t{\tau}
\def\r{\rho}
\def\D{\Delta}
\def\G{\Gamma}
\def\O{\Omega}
\def\e{\epsilon}
\def\p{\phi}
\def\P{\Phi}
\def\S{\Psi}
\def\E{\eta}
\def\m{\mu}
\def\Lam{\Lambda}
\def\eff{\mathcal{F}}
\def\ess{\mathfrak{P}}
\def\eee{\mathcal{E}}
\newcommand{\reals}{\mathbb{R}}
\newcommand{\naturals}{\mathbb{N}}
\newcommand{\ints}{\mathbb{Z}}
\newcommand{\complex}{\mathbb{C}}
\newcommand{\rationals}{\mathbb{Q}}
\newcommand{\downss}{\rotatebox{-90}{$\subseteq$}}
\newcommand{\nubar}{\overline{\nu}}
\newcommand{\ubar}{\overline{u}}
\newcommand{\obar}{\overline{\o}}

\author{Travis B. Russell}
\title{Characterizations of ordered operator spaces}
\address{Department of Mathematics, University of Nebraska-Lincoln, Lincoln NE 68588-0130}
 \email{trussell8@huskers.unl.edu}
 \keywords{operator system, operator space, matrix ordered vector space, real completely positive maps}
\urladdr{http://www.math.unl.edu/~trussell8/}
\subjclass[2010]{Primary 46L07.}

\date{September 9, 2016. Revised February 15, 2017.}
 \maketitle

\begin{abstract} We demonstrate new abstract characterizations for
unital and non-unital operator spaces. We characterize unital
operator spaces in terms of the cone of accretive operators
(operators whose real part is positive). We show that matrix norms
and accretive cones are induced by gauges, although inducing gauges
are not unique in general. Finally, we show that completely
positive completely contractive linear maps on non-unital operator
spaces extend to any containing operator system if and only if the
operator space is induced by a unique gauge.
\end{abstract}

\section{Introduction}

The study of operator spaces and operator systems has played an
increasingly important role in operator theory and operator algebras
since the introduction of completely positive maps by Stinespring in
\cite{Stinespring} and the seminal work on completely positive maps
by Arveson in \cite{Arveson69}. \textbf{Operator spaces} (vector
subspaces of $B(H)$, the bounded linear operators on a Hilbert space
$H$) have been studied in the context of Ruan's Theorem (Theorem
\ref{Ruan} below), which characterizes them up to complete isometry.
\textbf{Operator systems} (unital self-adjoint operator spaces), on
the other hand, have been studied in the context of the Choi-Effros
Theorem (Theorem \ref{ChoiEffros} below) which characterizes them
 up to unital complete order isomorphism. Since the norm
structure of an operator system is determined by its
order structure, operator systems are examples of ordered operator
spaces - operator spaces possessing a specified cone of positive operators at
each matrix level. Abstract operator spaces,
on the other hand, lack natural cones of positive operators.
For example, if $V \subset B(H)$ is a concrete operator
space, then the mapping $$x \mapsto \begin{bmatrix} 0 & x \\ 0 & 0
\end{bmatrix}$$ into $B(H^2)$ is easily seen to be completely isometric. As abstract operator spaces, $V$ and its image under the above mapping are identical, even though $V$ could contain non-zero positive operators while its image in $B(H^2)$ does not. In other words, abstract operator spaces forget their order structure.

In this paper, we will demonstrate an abstract characterization for
operator spaces in terms of matrix gauges (Theorem \ref{mainThm}).
Concretely, we define the gauge of an operator $T \in B(H)$ to be
$\nu(T)=\|Re(T)_+\|$, where $Re(T)_+$ is the positive part of the
real part of $T$. From the gauge, we can recover the norm (see Lemma
\ref{GaugeNormLemma}), involution and order structure at every
matrix level. Any mapping which is ``completely gauge isometric"
will automatically be completely isometric, self-adjoint, and a
complete order embedding. In the course of proving our main result,
we will also recover an abstract characterization for unital
operator spaces in terms of cones of \textbf{accretive operators},
operators whose real part is positive (Theorem \ref{UnitalOpSp}). As
applications, we will consider representations of normal operator
spaces and extensions of completely positive completely contractive
maps. A normal operator space is an abstract operator space together
with an order structure which satisfies the condition that $x \leq y
\leq z$ implies $\|y\| \leq \max(\|x\|,\|z\|)$ at every matrix
level. We provide representation theorems for these objects
(Theorems \ref{accRep} and \ref{normalRep}). This is achieved by
proving that the norm and order structures on a normal operator
space are induced by a matrix gauge. We show by example that this
inducing matrix gauge is not always unique. In fact, we can
characterize operator spaces with unique inducing gauges as operator
spaces with the ``real-cpcc extension property". This is is the
property that every completely positive completely contractive
linear map from the given operator space into $B(H)$ can be extended
to a completely positive completely contractive map on any
containing operator system. For example, operator systems have the
real-cpcc extension property, by the Arveson extension theorem.


Before moving on, we briefly review some related literature. The
results in sections 2 and 3 rely heavily upon the theory of
accretive operators and real-completely positive maps. Cones of
accretive operators have been studied in the context of operator
algebras, unital operator spaces, and Banach algebras in
\cite{BeardenBlecherRoots}, \cite{BlecherRealPosBanach},
\cite{BlecherOzawa}, \cite{BlecherReadI}, \cite{BlecherReadII}, and
\cite{BlecherRead}. See section 3 of \cite{BlecherReadII} for
several fundamental properties of the cone of accretive operators.
The real-completely positive maps defined in section 2 were also
studied by Blecher, Read and other authors. See section 2 of
\cite{BeardenBlecherRoots} for several fundamental results
concerning these maps. The study of accretive operators and
real-completely positive maps was brought to the author's attention
by David Blecher. Abstract characterizations of unital operator
spaces up to complete isometry can be found in \cite{BlecherNealI}
and \cite{BlecherNealII}. Another characterization of unital
operator spaces (in terms of the existence of sufficiently many
unital functionals) can be found in \cite{unitalNg}. Cones of
accretive operators are not addressed in \cite{BlecherNealI},
\cite{BlecherNealII} or \cite{unitalNg}. Matrix gauges, considered
in section 3, were introduced by Effros and Winkler in
\cite{Winkler} as non-commutative generalizations of Minkowski
gauges. Effros and Winkler were able to prove analogues of the
classical bipolar and Hahn-Banach Theorems for matrix gauges. We
prove a special case of their Hahn-Banach Theorem using our results
(Theorem \ref{EffrosWinkler}). Questions about abstract operator
spaces with a matricial order structure go back to the work of
Schreiner who studied ``matrix regular operator spaces" in
\cite{SCHREINER1998136}. The matrix regular condition is similar to,
but more restrictive than, our notion of normality. For example, the
positive cone of a matrix regular operator space spans the entire
space, while this may not be the case in a normal operator space.
Abstract characterizations of non-unital self-adjoint ordered
operator spaces can also be found in \cite{Karn10}, \cite{Ng}, and
\cite{Werner02}. Each of these authors take as an axiom the
existence of sufficiently many positive functionals to norm the
space, whereas we make no such assumption. Normality is also
mentioned in Werner's paper \cite{Werner02} in connection with the
existence of sufficiently many positive functionals to norm the
space. An abstract characterization for matrix-ordered $*$-algebras
due to Juschenko and Popovych can by found in
\cite{JuschenkoPopovychMatrixOrdAlg}.


We now summarize some basic definitions, notation, and background.
For a detailed introduction to these topics, we refer the reader to
\cite{paulsen2002completely}. We will call a vector space $V$ a
$\mathbb{R}$-vector space (respectively $\mathbb{C}$-vector space)
if the underlying field is $\mathbb{R}$ (respectively,
$\mathbb{C}$). For any subset $S$ of a $\mathbb{C}$-vector space, we
let ``span $S$" denote the set of $\mathbb{C}$-linear combinations
of elements of $S$. Let $V$ be a ($\mathbb{R}$ or
$\mathbb{C}$)-vector space. For each $n \in \mathbb{N}$, we let
$M_n(V)$ denote the vector space of $n \times n$ matrices with
entries in $V$. For each $n,m \in \mathbb{N}$, we let $M_n$
(respectively, $M_{n,m}$) denote the $n \times n$ (respectively, $n
\times m$) matrices with entries in $\mathbb{C}$. We use the
notation $x=[x_{k,l}] \in M_n(V)$ to indicate that the $(k,l)$-entry
of $x$ is $x_{k,l}$. When $x \in V$ and $Y = [y_{k,l}] \in M_n$, we
let $x \otimes Y$ denote the matrix $[y_{k,l} x] \in M_n(V)$. For $x
\in M_n(V)$, we let $x^T$ denote the transpose of $x$.

Given a Hilbert space $H$ and $T \in B(H)$, we let $\sigma(T)$
denote the spectrum of $T$ and, when $T$ is self-adjoint, we let
$T_+$ denote the positive part of $T$.

A subset $C$ of a ($\mathbb{R}$ or $\mathbb{C}$)-vector space is
called a cone if $C + C \subseteq C$ and $tC \subseteq C$ for all $t
\geq 0$. A cone is called \textbf{proper} if $C \cap -C = \{0\}$.
Now, let $V$ be a $\complex$-vector space. A \textbf{matrix cone} is
a sequence of cones $\{C_n \subset M_n(V)\}$ satisfying the
condition that for every $X \in M_{n,k}$, we have $X^*C_nX \subset
C_k$. If, in addition, each $C_n$ is proper, we call $\{C_n\}$ a
\textbf{proper} matrix cone.

By a \textbf{$*$-vector space}, we mean a $\complex$-vector space
$V$ equipped with a conjugate-linear \textbf{involution} $*$, i.e.,
a map satisfying $(x + \lambda y)^* = x^* + \overline{\lambda}y^*$
and $x^{**}=x$ for each $x, y \in V$ and $\lambda \in \complex$. For
each $x \in V$, we define $Re(x):= \frac{1}{2}(x + x^*)$ and $Im(x)
:= \frac{1}{2i}(x - x^*)$. We say $x$ is \textbf{self-adjoint} if
$x^*=x$, and write $V_{sa}$ for the $\reals$-vector space $\{x:
x^*=x\}$. When $V$ is a $*$-vector space, we may extend $*$ to
$M_n(V)$ by setting $[a_{k,l}]^* = [a_{k,l}^*]^T$, the transpose of
the matrix obtained by applying $*$ to each entry.

By a \textbf{matrix-ordered $*$-vector space}, we mean a $*$-vector
space $V$ together with a proper matrix cone $V_+ =
\{V_+^n\}$ satisfying $V_+^n \subset M_n(V)_{sa}$ for each $n$. We call a vector $e \in V_{sa}$ a \textbf{matrix-order unit} if for each $x \in M_n(V)_{sa}$, there exists a $t > 0$ such that $te \otimes I_n + x \in V_+^n$. We call $e$ an \textbf{archimedean matrix-order unit} if whenever $t e \otimes I_n + x \in V_+^n$ for all $t > 0$ it follows that $x \in V_+^n$. When $e$ is an archimedean matrix-order unit for a matrix-ordered $*$-vector space $(V,V_+)$, we call the triple $(V,V_+,e)$ an \textbf{archimedean matrix-order unit space}.

\begin{example} Let $H$ be a Hilbert space, and let $S \subset B(H)$
be an operator system. We may identify $M_n(S) \subset B(H^n)$. Let
$S_+^n$ be the set of positive operators in $M_n(S)$, $S_+ = \{S_+^n\}$, and $e =
I_H$. Then $(S,S_+,e)$ is an archimedean matrix-order unit space. \end{example}

Given two matrix-ordered $*$-vector spaces $(S,S_+)$ and $(T,T_+)$,
we call a map $\phi:S \rightarrow T$ \textbf{completely positive} if
the map $\phi^{(n)}: M_n(S) \rightarrow M_n(T)$ (defined by applying
$\phi$ to each entry of a matrix) is positive for each $n$ (i.e.,
$\phi^{(n)}(S_+^n) \subset T_+^n$). $\phi$ is called
\textbf{self-adjoint} if $\phi(x)^* = \phi(x^*)$ for each $x \in S$.
$\phi$ is called a \textbf{complete order embedding} if $\phi$ is
completely positive and one-to-one, and $\phi^{-1}$ (restricted to
the image of $\phi$) is also completely positive. We call a
surjective complete order embedding a \textbf{complete order
isomorphism}. It is well known that completely positive maps on
operator systems are automatically self-adjoint.

The following Theorem, due to Choi and Effros, shows that
archimedean matrix-order unit spaces can always be realized as
operator systems.

\begin{theorem}[Choi-Effros, \cite{ChoiEffros}] \label{ChoiEffros}
Let $(S,S_+,e)$ be an archimedean matrix-order unit space. Then there exists a Hilbert
space $H$ and a unital complete order embedding $\phi: S \rightarrow B(H)$. \end{theorem}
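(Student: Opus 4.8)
The plan is to reduce the Choi--Effros theorem to the GNS-type construction for completely positive maps. First I would pass to the order dual: for each state (i.e., completely positive map with $f(e)=1$) $f\colon S\to\complex$, one builds via the usual GNS-type procedure a Hilbert space $H_f$ and a unital completely positive map $\phi_f\colon S\to B(H_f)$. The key preliminary step is therefore to show the archimedean condition guarantees a \emph{separating} family of states: given $x\in M_n(S)_{sa}$ with $x\notin V_+^n$, one uses the archimedean hypothesis to find $t_0>0$ with $t_0 e\otimes I_n+x\notin V_+^n$, then applies a Hahn--Banach separation argument in the real vector space $M_n(S)_{sa}$ to separate this point from the (closed, after passing to a suitable quotient/completion if necessary) cone $V_+^n$, obtaining a real-linear functional; symmetrizing and normalizing yields a self-adjoint completely positive functional on $M_n(S)$ that does not annihilate the relevant matrix, and composing with matrix-entry evaluation produces states on $S$ detecting the failure of positivity.

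Next I would assemble these pieces. Let $\phi=\bigoplus_f \phi_f$ act on $H=\bigoplus_f H_f$, where $f$ ranges over all states on $S$ (or a separating subfamily). Each $\phi_f$ is unital and completely positive, so $\phi$ is unital and completely positive; injectivity and the reverse implication $\phi^{(n)}(x)\geq 0 \Rightarrow x\in V_+^n$ follow from the separation step above, since if $x\notin V_+^n$ there is a state $f$ (really a state at level $n$, realized through the $\phi_f$ construction after noting states at all matrix levels are governed by states on $S$ via compression) with $\langle \phi_f^{(n)}(x)\xi,\xi\rangle<0$. That $\phi$ is isometric, or rather that no metric claim is even needed here, is worth emphasizing: the statement only asks for a unital complete order embedding, so once complete positivity of $\phi$ and of $\phi^{-1}$ on the range is established, we are done.

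The step I expect to be the main obstacle is the Hahn--Banach separation, specifically dealing with the fact that $V_+^n$ need not be closed in any a priori topology on $M_n(V)$ (the axioms only give an algebraic archimedean condition, with no norm given). The standard remedy, which I would carry out, is to introduce the order seminorm $\|x\|_n=\inf\{t>0: te\otimes I_n\pm x\in V_+^n\}$ on $M_n(S)_{sa}$, check it is a norm using properness of the cones together with the archimedean property, verify that $V_+^n$ is closed in this norm (again by the archimedean axiom), and then separate in this normed space; one must also check the order norm behaves compatibly across matrix levels so that the resulting functionals are completely bounded/completely positive. A secondary technical point is verifying that the GNS map $\phi_f$ is genuinely completely positive and unital, and that completely positive states at level $n$ reduce to states on $S$ — this is routine but must be stated. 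Finally, one checks $\phi$ is well-defined into $B(H)$ for the full direct sum (a cardinality/boundedness remark, using that each $\phi_f$ is contractive with respect to the order norm).
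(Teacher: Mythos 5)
This theorem is quoted in the paper as a classical result of Choi and Effros, with a citation and no proof, so there is nothing internal to compare against; I will instead assess your argument directly. Your overall architecture --- use the archimedean/order-unit structure to produce, via Hahn--Banach separation, enough positive functionals to detect failure of positivity, then take a direct sum --- is the right one, and your handling of the separation step is sound: indeed, one can either work with the order norm as you propose, or avoid topology entirely by noting that $e\otimes I_n$ is an algebraic internal point of $V_+^n$ in $M_n(S)_{sa}$ (by the order-unit property) and invoking the algebraic version of Hahn--Banach separation.

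The genuine gap is in the assembly. First, there is no GNS construction available: $S$ is only a matrix-ordered $*$-vector space with no multiplication, so a state $f$ on $S$ does not generate a Hilbert space representation; the only map it gives you is $f$ itself into $\complex$. Second, and more seriously, your parenthetical claim that ``states at all matrix levels are governed by states on $S$ via compression'' is false, and your direct sum $\bigoplus_f \phi_f$ over states $f$ on $S$ therefore cannot reflect positivity at level $n>1$. If $\phi_f$ is scalar-valued, then $\langle \phi_f^{(n)}(x)\xi,\xi\rangle = f(\xi^* x\, \xi)$ for $\xi\in M_{n,1}$, so this family only tests scalar compressions $\xi^* x\,\xi\in S$; an element of $M_n(S)_{sa}$ can have all such compressions positive without being positive (this is precisely the gap between positivity and complete positivity). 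The standard repair is to keep the separating functionals where you found them: for each $n$ and each state $s$ on $M_n(S)$ (which your separation step produces), pass to the associated completely positive map $\phi_s\colon S\to M_n$ determined by $\langle\phi_s(x)e_j,e_i\rangle = n\, s(x\otimes E_{ij})$, verify that the correspondence $s\leftrightarrow\phi_s$ exchanges positivity of $s$ with complete positivity of $\phi_s$, and take $\phi=\bigoplus_{n}\bigoplus_{s}\phi_s$. With that substitution the reverse implication $\phi^{(n)}(x)\geq 0\Rightarrow x\in V_+^n$ follows exactly as you intend, and the rest of your outline (unitality, injectivity from properness of the cones, well-definedness of the direct sum via contractivity in the order norm) goes through.
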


\noindent Hence, we may identify the archimedean matrix-order unit
space $S$ with the operator system $\phi(S) \subseteq B(H)$.

We now turn our attention to operator spaces. Let $V$ be a
$\complex$-vector space. We call a sequence of norms $\{ \|\cdot\|_n
: M_n(V) \rightarrow [0,\infty) \}$ an \textbf{$L^\infty$ matrix-norm} provided
that for each $A \in M_n(V)$, $B
\in M_m(V)$, and $X,Y \in M_{n,k}$, the following conditions hold.
\begin{enumerate}
\item $\|Y^*AX\|_k \leq \|X\| \|Y\| \|A\|_n$, where $\|X\|$ and
$\|Y\|$ are the operator norms of $X$ and $Y$, respectively.
\item $\|A \oplus B \|_{n+m} = \max \{ \|A\|_n, \|B\|_m\}$, where
$A \oplus B = \begin{bmatrix} A & 0 \\ 0 & B \end{bmatrix}$.
\end{enumerate}
A vector space $V$ together with an $L^\infty$ matrix-norm $\{\|\cdot\|_n\}$ is
called an \textbf{$L^\infty$ matrix-normed space}.

\begin{example} Let $V \subset B(H)$ be an operator space.
Define $\|\cdot\|_n$ on $M_n(V)$ by identifying
$M_n(V) \subset B(H^n)$ and defining $\|\cdot\|_n$ to be the
operator norm. Then $(V,\{\|\cdot\|_n\})$ is an $L^\infty$ matrix-normed space. \end{example}

Given two $L^\infty$ matrix-normed spaces $(V,\{\|\cdot\|_n\})$, and
$(W, \{\|\cdot\|_n\})$, we call a map $\phi: V \rightarrow W$
\textbf{completely contractive} if $\phi^{(n)}$ is contractive for
each $n$. We call $\phi$ \textbf{completely isometric} if
$\phi^{(n)}$ is isometric for each $n$.

The next theorem shows that every $L^\infty$ matrix-normed space has
a representation as an operator space.

\begin{theorem}[Ruan, \cite{Ruan}] \label{Ruan} Let $(V,\{\|\cdot\|_n\})$ be an
$L^\infty$ matrix-normed space. Then there exists a Hilbert space
$H$ and a linear map $\phi:V \rightarrow B(H)$ which is completely
isometric. \end{theorem}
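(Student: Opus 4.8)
The plan is to derive Ruan's Theorem from the Choi--Effros Theorem (Theorem \ref{ChoiEffros}) by passing to the Paulsen operator system. Given an $L^\infty$ matrix-normed space $(V,\{\|\cdot\|_n\})$, form the $*$-vector space $\mathcal{S}_V=\begin{bmatrix}\complex & V\\ \overline{V} & \complex\end{bmatrix}$, with $\overline{V}$ the conjugate space of $V$ and the evident involution, so that $M_n(\mathcal{S}_V)$ is canonically the space of block matrices $\begin{bmatrix}A & Z\\ W & B\end{bmatrix}$ with $A,B\in M_n$, $Z\in M_n(V)$, $W\in M_n(\overline{V})$, where products $RZS$ with $R,S\in M_n$ are formed via the natural $M_n$-bimodule structure on $M_n(V)$. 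Declare the cone $C_n\subset M_n(\mathcal{S}_V)_{sa}$ to consist of all $\begin{bmatrix}A & Z\\ Z^* & B\end{bmatrix}$ for which $A,B$ are positive semidefinite and $\|(A+\e I_n)^{-1/2}Z(B+\e I_n)^{-1/2}\|_n\le 1$ for every $\e>0$, and set $e:=\begin{bmatrix}1 & 0\\ 0 & 1\end{bmatrix}$. The $\e$-regularization in this definition is the one genuinely delicate choice: it is what lets us phrase ``positivity of a $2\times 2$ block matrix'' purely in terms of the abstract matrix norm, with no reference to a concrete realization of $V$.

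The main obstacle is to verify that $(\mathcal{S}_V,\{C_n\},e)$ is an archimedean matrix-order unit space, so that Choi--Effros applies. Three of the four things to check are routine. Each $C_n$ is clearly stable under multiplication by nonnegative scalars, and is proper because $\|\cdot\|_n$ is a norm (if $A=B=0$, letting $\e\to 0$ forces $Z=0$). And $e$ is an archimedean matrix-order unit: for self-adjoint $s=\begin{bmatrix}A & Z\\ Z^* & B\end{bmatrix}$, applying axiom~(1) to the scalar amplifications $(tI_n+A+\e I_n)^{-1/2}$ and $(tI_n+B+\e I_n)^{-1/2}$ bounds $\|(tI_n+A+\e I_n)^{-1/2}Z(tI_n+B+\e I_n)^{-1/2}\|_n$ by $\|Z\|_n(t-\|A\|)^{-1/2}(t-\|B\|)^{-1/2}$, which is at most $1$ once $t$ is large, and the archimedean condition then follows by reparametrizing $t$. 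The two substantive points are the stability of $C_n$ under addition and the compatibility condition $Y^*C_nY\subseteq C_k$ for $Y\in M_{n,k}$.

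For addition: given $\begin{bmatrix}A_i & Z_i\\ Z_i^* & B_i\end{bmatrix}\in C_n$ ($i=1,2$) and $\e>0$, factor $Z_i=(A_i+\tfrac{\e}{2}I_n)^{1/2}W_i(B_i+\tfrac{\e}{2}I_n)^{1/2}$ with $W_i:=(A_i+\tfrac{\e}{2}I_n)^{-1/2}Z_i(B_i+\tfrac{\e}{2}I_n)^{-1/2}$ of norm at most $1$. Then $Z_1+Z_2=R(W_1\oplus W_2)S$, where the row matrix $R=[\,(A_1+\tfrac{\e}{2}I_n)^{1/2}\ \ (A_2+\tfrac{\e}{2}I_n)^{1/2}\,]\in M_{n,2n}$ satisfies $RR^*=A_1+A_2+\e I_n$ and the corresponding column matrix $S\in M_{2n,n}$ satisfies $S^*S=B_1+B_2+\e I_n$; applying axiom~(1) to $(A_1+A_2+\e I_n)^{-1/2}R$ and $S(B_1+B_2+\e I_n)^{-1/2}$ (both of operator norm $1$) together with $\|W_1\oplus W_2\|_{2n}=\max(\|W_1\|_n,\|W_2\|_n)\le 1$ gives $\|(A_1+A_2+\e I_n)^{-1/2}(Z_1+Z_2)(B_1+B_2+\e I_n)^{-1/2}\|_n\le 1$. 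This is the one place axiom~(2) is used. For the compatibility: given $\begin{bmatrix}A & Z\\ Z^* & B\end{bmatrix}\in C_n$ and $Y\in M_{n,k}$, factor $Y^*ZY=\big(Y^*(A+\d I_n)^{1/2}\big)\big((A+\d I_n)^{-1/2}Z(B+\d I_n)^{-1/2}\big)\big((B+\d I_n)^{1/2}Y\big)$, apply axiom~(1), and note that whenever $\d\|Y\|^2\le\e$ one has $(Y^*AY+\e I_k)^{-1/2}Y^*(A+\d I_n)Y(Y^*AY+\e I_k)^{-1/2}\le I_k$ in the operator order (and symmetrically with $B$); this yields $\|(Y^*AY+\e I_k)^{-1/2}Y^*ZY(Y^*BY+\e I_k)^{-1/2}\|_k\le 1$ for every $\e>0$, hence $Y^*\begin{bmatrix}A & Z\\ Z^* & B\end{bmatrix}Y\in C_k$. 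Both verifications hinge on the interplay between the $\e$-regularization and the submultiplicativity axiom~(1), and this is where essentially all of the work lies.

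Once this is done, the Choi--Effros Theorem furnishes a Hilbert space $H$ and a unital complete order embedding $\Psi:\mathcal{S}_V\to B(H)$; since unital complete order embeddings of operator systems are completely isometric, $\Psi$ is completely isometric for the underlying operator-space structures. Let $\phi:V\to B(H)$ be the composition of the inclusion $v\mapsto\begin{bmatrix}0 & v\\ 0 & 0\end{bmatrix}$ with $\Psi$. It then remains only to check that $\mathcal{S}_V$ induces on this copy of $V$ exactly the given matrix norm $\{\|\cdot\|_n\}$; this is a direct computation comparing the operator-system norm formula $\|s\|_m=\inf\{t>0:\begin{bmatrix}t(e\otimes I_m) & s\\ s^* & t(e\otimes I_m)\end{bmatrix}\in C_{2m}\}$ against the definition of the cones, where the $\e$-regularization collapses the cone condition to exactly $\|x\|_m\le t$. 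Therefore $\|\phi^{(m)}(x)\|=\|x\|_m$ for all $m$ and all $x\in M_m(V)$, so $\phi$ is a complete isometry, which proves the theorem. (Alternatively, one could bypass Choi--Effros and argue in the manner of Effros and Ruan: for each $x\in M_n(V)$ with $\|x\|_n=1$ produce, via a Hahn--Banach separation argument, a complete contraction into some $B(H_x)$ that preserves $\|x\|_n$, and then take a direct sum over a set of representatives; the reduction above seems cleanest given what is already available here.)
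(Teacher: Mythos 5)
Your proposal is correct, but it is worth noting that the paper offers no proof of this statement at all: Theorem \ref{Ruan} is quoted as a known background result with a citation to Ruan's original paper, whose argument (and the later Effros--Ruan simplification) proceeds by a Hahn--Banach separation argument producing sufficiently many completely contractive maps into matrix algebras, followed by a direct sum --- essentially the route you mention parenthetically at the end. What you give instead is a genuine reduction of Ruan's theorem to the Choi--Effros theorem via an abstractly defined Paulsen system, and the key verifications are sound: the $\e$-regularized cone condition is exactly the correct abstract surrogate for positivity of the $2\times 2$ block matrix; your factorization $Z_1+Z_2=R(W_1\oplus W_2)S$ with $RR^*=A_1+A_2+\e I_n$ correctly combines axioms (1) and (2) to get additivity of $C_n$; the compatibility $Y^*C_nY\subseteq C_k$ and the archimedean property (via reparametrizing $t+\e$) go through as you say; and the order-unit norm of the corner copy of $x$ does collapse to $\|x\|_m$. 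This reduction buys a proof that is short modulo Choi--Effros and is very much in the spirit of the rest of this paper, which repeatedly reduces gauge and accretive structures to the unital ordered case; the price is that the analytic content is hidden inside Choi--Effros (itself a GNS-type construction of comparable depth to Ruan's separation argument), so nothing is gained in terms of elementarity. Two small points you should make explicit if you write this up: first, that $M_n(\mathcal{S}_V)_{sa}$ really consists of the block matrices $\bigl[\begin{smallmatrix}A & Z\\ Z^* & B\end{smallmatrix}\bigr]$ with $A,B$ self-adjoint (this requires fixing the involution on $\overline{V}$ and the canonical shuffle carefully, so that conjugation by a scalar $X\in M_{n,k}$ acts blockwise); second, that the unital complete order embedding furnished by Choi--Effros is completely isometric for the order-unit norms, which is the standard $2\times 2$ matrix criterion the paper itself invokes in the proof of Theorem \ref{UnitalOpSp}.
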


\noindent Hence, we may identify the $L^\infty$ matrix-normed space $V$ with the operator space $\phi(V) \subset B(H)$, at least with respect to its norm structure.

\section{Accretive matrix-ordered vector spaces}

We begin by studying the structure of accretive matrix-ordered
vector spaces, algebraic generalizations of operator spaces with
cones of accretive operators (see Example \ref{concreteAccSp}
below). The following lemmas will be useful.

\begin{lemma} \label{lem1} Let $\{C_n\}$ be a matrix cone in a $\complex$-vector space
$Z$, and let $V_n = C_n \cap -C_n$. Then
each $V_n$ is a $\reals$-vector space. Moreover,
 $$ V_n = \{ z \in M_n(Z): z + z^T, i(z - z^T) \in M_n(V_1)\}.$$ \end{lemma}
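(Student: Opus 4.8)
The plan is to prove the two assertions separately, starting with the vector space claim. First I would observe that $V_n = C_n \cap -C_n$ is automatically closed under addition, since $C_n$ is a cone, and closed under nonnegative scalar multiplication; together with the fact that $z \in V_n$ iff $-z \in V_n$, this gives closure under all real scalars, so $V_n$ is an $\reals$-vector space. (Note $V_n$ need not be a $\complex$-vector space, which is why we only claim it is an $\reals$-vector space — this is the point of the lemma's phrasing.)

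For the displayed identity, I would prove the two inclusions. For ``$\subseteq$'': suppose $z \in V_n$, i.e., $\pm z \in C_n$. Applying the matrix-cone compatibility condition with the scalar matrix $X = I_n$ over $\complex$ is not enough; instead I would use the permutation/transpose structure. The key point is that conjugating by suitable matrices in $M_n$ produces $z^T$-type expressions: more precisely, I want to show $z + z^T$ and $i(z - z^T)$ land in $M_n(V_1) = M_n(C_1 \cap -C_1)$, equivalently that each entry of these matrices lies in $V_1 = C_1 \cap -C_1$. To extract the $(k,l)$ entry of something in $C_n$ as an element of $C_1$, I would conjugate by the rectangular matrices $E_k \in M_{n,1}$ (the $k$-th standard basis column): $E_k^* C_n E_l$ only directly gives entries when $k = l$, so to get off-diagonal behavior I would instead conjugate by $X = E_k + E_l \in M_{n,1}$ and by $X' = E_k + iE_l$, whose images $X^* z X$ and $(X')^* z X'$ are scalars (elements of $M_1(Z) = Z$) built from $z_{k,k}, z_{l,l}, z_{k,l}, z_{l,k}$. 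Taking appropriate $\complex$-linear combinations of $X^*zX$, $(X')^*zX$ over the four sign/phase choices isolates $z_{k,l} + z_{l,k}$ and $i(z_{k,l} - z_{l,k})$, and since $z \in C_n$ these combinations (after checking the coefficients are nonnegative, or by using that $V_n$ absorbs real scalars) lie in $C_1$; applying the same argument to $-z$ shows they lie in $-C_1$ as well, hence in $V_1$. Running this for all $k,l$ shows $z + z^T, i(z-z^T) \in M_n(V_1)$.

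For ``$\supseteq$'': suppose $z \in M_n(Z)$ with $w := z + z^T \in M_n(V_1)$ and $w' := i(z - z^T) \in M_n(V_1)$. Then $z = \frac{1}{2}(w - i w')$ lies in $M_n(V_1)$ entrywise — wait, more carefully, I recover $z$ from $w, w'$ by solving the linear system, and the point is that each entry $z_{k,l}$ is a $\complex$-linear combination of entries of $w$ and $w'$, all of which lie in $V_1$; since $V_1$ is (by the $n=1$ case of the first assertion) an $\reals$-vector space but not obviously a $\complex$-vector space, I must be careful: the combination $\frac{1}{2}(w_{k,l} - i w'_{k,l})$ involves multiplication by $i$. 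Here I would instead argue directly that $z \in V_n$ by verifying $\pm z \in C_n$ using the matrix-cone structure applied to the diagonal matrices whose entries are the (real-scalar-closed) entries of $w, w'$, i.e., reconstruct $z$ inside $C_n$ via $X^*(\cdot)X$ operations on elements of $C_1 \otimes I_n$ — the same computation as in the forward direction, read backwards.

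The main obstacle I anticipate is the bookkeeping in the ``$\subseteq$'' direction: choosing the right finite family of rectangular matrices $X \in M_{n,1}$ (or $M_{n,2}$) and the right nonnegative-coefficient combinations so that $X^* z X$-type terms sum exactly to the entries of $z + z^T$ and $i(z-z^T)$, while keeping all coefficients compatible with the cone (nonnegativity) — the phases $1, i, -1, -i$ must be handled so that only real, nonnegative weights appear, which typically requires averaging over a group of such $X$'s. Once that combinatorial identity is set up, both inclusions follow by the same computation, and the $\reals$-vector space claim for general $n$ is immediate from the cone axioms.
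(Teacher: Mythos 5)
Your proposal is correct and follows essentially the same route as the paper: the paper extracts $z_{k,l}+z_{l,k}$ and $i(z_{k,l}-z_{l,k})$ via exactly the conjugations $(e_k+e_l)^*z(e_k+e_l)$ and $(e_k+ie_l)^*z(e_k+ie_l)$ minus the diagonal compressions $e_k^*ze_k$ and $e_l^*ze_l$, and handles the reverse inclusion with the adjoint identities $(e_k+e_l)a_{k,l}(e_k+e_l)^*-\cdots$, i.e.\ your ``same computation read backwards.'' The nonnegativity obstacle you anticipate does not actually arise: all combinations are formed inside $V_1$ and $V_n$, which are $\reals$-vector spaces, so the coefficients $-1$ on the diagonal terms are harmless and no averaging over phases is needed; the only place a factor of $i$ must be absorbed is in the conjugation by $e_k+ie_l$ itself.
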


\begin{proof} Since each $V_n$ is a cone and satisfies $V_n = -V_n$, each $V_n$ is a $\reals$-vector space. Set $V'_n := \{ z \in M_n(Z): z + z^T, i(z - z^T) \in M_n(V_1)\}.$
To see that $V_n \subseteq V'_n$, let $z=[z_{k,l}] \in V_n$. We will show that for each $k,l \leq n$, $z_{k,l} + z_{l,k}, i(z_{k,l} - z_{l,k}) \in V_1$. For each $k \leq n$, let $e_k
\in M_{n,1}$ be the column matrix with a 1 in its $k$th entry and
zeroes elsewhere. Now, $$ z_{k,l} + z_{l,k} = (e_k + e_l)^* z (e_k + e_l) - e_k^* z e_k - e_l^* z e_l $$ and $$ i(z_{k,l} - z_{l,k}) = (e_k + ie_l)^* z (e_k + ie_l) - e_k^* z e_k - e_l^* z e_l. $$ Since $\{V_m\}$ is a matrix cone and $V_n$ is a $\reals$-vector space, we see that $$z_{k,l} + z_{l,k}, i(z_{k,l} - z_{l,k}) \in V_1.$$ To see that $V'_n \subseteq V_n$, assume that $z=[z_{k,l}] \in V'_n$. It suffices to show that $z + z^T, z - z^T \in V_n$. Let $a_{k,l} := z_{k,l} + z_{l,k}, b_{k,l} := i(z_{k,l} - z_{l,k})$, and $c_{k,l} := z_{k,l} - z_{l,k}$. Since $z \in V'_n$, we see that $a_{k,l}, b_{k,l} \in V_1$ for each $k,l \leq n$. Now, $$ a_{k,l} \otimes (E_{k,l} + E_{l,k}) = (e_k + e_l)a_{k,l}(e_k + e_l)^* - e_k a_{k,l} e_k^* - e_l a_{k,l} e_l^* $$ and $$ c_{k,l} \otimes (E_{k,l} - E_{l,k}) = (e_k + ie_l)b_{k,l}(e_k + ie_l)^* - e_k b_{k,l} e_k^* - e_l b_{k,l} e_l^* $$ where $E_{k,l}$ is the $n \times n$ matrix with a 1 in the $(k,l)$ entry and zeroes elsewhere. Hence $a_{k,l} \otimes (E_{k,l} + E_{l,k}), c_{k,l} \otimes (E_{k,l} + E_{l,k}) \in V_n$. But $$ z + z^T = \frac{1}{2} \sum_{k,l \leq n} a_{k,l} \otimes (E_{k,l} + E_{l,k}) $$ and $$ z - z^T = \frac{1}{2} \sum_{k,l \leq n} c_{k,l} \otimes (E_{k,l} - E_{l,k}).$$ So $z+z^T, z - z^T \in V_n$.\end{proof}

\begin{lemma} \label{lem2} Let $\{C_n\}$ be a matrix cone
in a $\complex$-vector space $Z$, and let
$V_n = C_n \cap -C_n$. Then $\span V_n = M_n(\span
V_1)$. \end{lemma}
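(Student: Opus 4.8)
\noindent\textit{Proof proposal.} The plan is to establish the two inclusions $\span V_n \subseteq M_n(\span V_1)$ and $M_n(\span V_1) \subseteq \span V_n$ separately, using Lemma \ref{lem1} for the first and compressions by scalar matrices (matrix units) for the second.

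For $\span V_n \subseteq M_n(\span V_1)$, I would take $z = [z_{k,l}] \in V_n$ and apply Lemma \ref{lem1}, which gives $z + z^T \in M_n(V_1)$ and $i(z - z^T) \in M_n(V_1)$. Reading off the $(k,l)$ entries, $z_{k,l} + z_{l,k} \in V_1$ and $i(z_{k,l} - z_{l,k}) \in V_1$, so $z_{k,l} = \tfrac12\bigl((z_{k,l}+z_{l,k}) + (-i)\cdot i(z_{k,l}-z_{l,k})\bigr) \in \span V_1$. Hence $V_n \subseteq M_n(\span V_1)$, and since $M_n(\span V_1)$ is a complex subspace it must contain $\span V_n$ as well.

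For the reverse inclusion, note that $M_n(\span V_1)$ is the complex span of the elements $v \otimes E_{k,l}$ with $v \in V_1$ and $1 \le k,l \le n$ (where $E_{k,l}$ is the standard matrix unit), so it suffices to place each such $v \otimes E_{k,l}$ in $\span V_n$. The key observation is that $\{V_m\}$ is itself a matrix cone: for $X \in M_{m,k}$ one has $X^* V_m X \subseteq X^* C_m X \cap X^*(-C_m)X \subseteq C_k \cap (-C_k) = V_k$. Writing $e_k \in M_{n,1}$ for the $k$th standard column, this yields $e_k v e_k^* = v \otimes E_{k,k} \in V_n$, which handles the diagonal case. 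For $k \ne l$ I would compress $v$ by $e_k + e_l$ and by $e_k + i e_l$ to obtain $v \otimes (E_{k,k}+E_{k,l}+E_{l,k}+E_{l,l})$ and $v \otimes (E_{k,k}-iE_{k,l}+iE_{l,k}+E_{l,l})$ in $V_n$; subtracting $v\otimes E_{k,k}$ and $v \otimes E_{l,l}$ from each and then taking $\tfrac12$ of the sum of the first result with $i$ times the second isolates $v \otimes E_{k,l}$. This computation is the core of the argument, but it runs exactly parallel to the matrix-unit identities already used in the proof of Lemma \ref{lem1}.

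I do not anticipate a genuine obstacle. The only delicate points are bookkeeping: keeping track of which factor is starred in the matrix-cone condition $X^* C_n X \subseteq C_k$ so that one correctly passes from level $1$ up to level $n$ (taking $X \in M_{1,n}$), and observing that the scalars $\tfrac12$ and $i$ needed to extract $v \otimes E_{k,l}$ are complex — which is precisely why the conclusion is about the complex span and not the real linear span of $V_n$. Combining the two inclusions gives $\span V_n = M_n(\span V_1)$.
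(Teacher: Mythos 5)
Your proposal is correct, and the first inclusion is handled exactly as in the paper: apply Lemma \ref{lem1} to $z \in V_n$ and write $z = \tfrac12(z+z^T) + \tfrac{1}{2i}\,i(z-z^T)$ entrywise to land in $M_n(\span V_1)$. For the reverse inclusion you diverge from the paper. The paper takes $z \in M_n(V_1)$, sets $a = z+z^T$ and $b = i(z-z^T)$, checks directly that $a+a^T,\ i(a-a^T),\ b+b^T,\ i(b-b^T) \in M_n(V_1)$, and then invokes the characterization $V_n = \{w : w+w^T,\ i(w-w^T) \in M_n(V_1)\}$ from Lemma \ref{lem1} to conclude $a,b \in V_n$ and hence $z = \tfrac12 a + \tfrac1{2i}b \in \span V_n$. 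You instead reduce to the generators $v \otimes E_{k,l}$ with $v \in V_1$ and extract each one by compressing with $e_k$, $e_k+e_l$ and $e_k+ie_l$, after first observing that $\{V_m\}$ inherits the matrix cone property. Your compression identities check out: the two compressions give $v \otimes (E_{k,k}+E_{k,l}+E_{l,k}+E_{l,l})$ and $v \otimes (E_{k,k}-iE_{k,l}+iE_{l,k}+E_{l,l})$ in $V_n$, and the indicated complex combination isolates $v \otimes E_{k,l}$. In effect you re-run the compression computation that sits inside the paper's proof of the $V'_n \subseteq V_n$ half of Lemma \ref{lem1}, rather than quoting that half as a black box. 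The paper's route is shorter because it reuses Lemma \ref{lem1} twice; yours is more self-contained for the second inclusion (needing only the matrix cone property of $\{V_m\}$) and has the virtue of making completely explicit where the complex scalars enter, i.e., why the conclusion concerns $\span V_n$ and not $V_n$ itself. Both arguments are sound.
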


\begin{proof} For $z \in M_n(Z)$, set $a := z + z^T$ and $b := i(z - z^T)$. Then $z = \frac{1}{2} a + \frac{1}{2i} b$. If $z \in V_n$, then by Lemma 2.1, $a,b \in M_n(V_1)$. Hence, $V_n \subset M_n( \span V_1)$. On the other hand, if $z \in M_n(V_1)$, it is easily checked that $a + a^T, i(a-a^T), b+b^T, i(b-b^T) \in M_n(V_1)$. Hence, $a,b \in V_n$. So $M_n(V_1) \subset \span V_n$. The statement follows.\end{proof}

\begin{lemma} \label{lem3} Let $\{C_n\}$ be a matrix cone
in a $\complex$-vector space $Z$, and set
$V_n = C_n \cap -C_n$ and $J_n = V_n \cap iV_n$. Then each
 $J_n$ is a $\complex$-vector space, and $J_n = M_n(J_1)$.
\end{lemma}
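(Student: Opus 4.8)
The plan is to establish the two assertions separately, both times leaning on the description of $V_n$ furnished by Lemma \ref{lem1}. For the first assertion, I would note that $V_n$ is an $\reals$-vector space by Lemma \ref{lem1}, and that $iV_n$ is too (it is the image of $V_n$ under the $\reals$-linear map $w \mapsto iw$), so $J_n = V_n \cap iV_n$ is an $\reals$-vector space. To upgrade this to a $\complex$-vector space it remains only to check $iJ_n \subseteq J_n$: if $z \in J_n$ then $iz \in iV_n$ trivially, while $z \in iV_n$ gives $iz \in i^2 V_n = -V_n = V_n$, using that $V_n$ is symmetric; hence $iz \in V_n \cap iV_n = J_n$. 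In particular $J_1$ is a $\complex$-vector space, so $M_n(J_1)$ makes sense and is closed under the transpose.

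For the identity $J_n = M_n(J_1)$, the key step is to rewrite the defining conditions of $J_n$ using Lemma \ref{lem1}. That lemma gives: $z \in V_n$ iff $z + z^T \in M_n(V_1)$ and $i(z - z^T) \in M_n(V_1)$. To describe $iV_n$, I would apply Lemma \ref{lem1} to the element $-iz$ (which lies in $V_n$ exactly when $z \in iV_n$): since $(-iz) + (-iz)^T = -i(z+z^T)$ and $i\bigl((-iz) - (-iz)^T\bigr) = z - z^T$, we get that $z \in iV_n$ iff $i(z+z^T) \in M_n(V_1)$ and $z - z^T \in M_n(V_1)$ (using $-M_n(V_1) = M_n(V_1)$). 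Intersecting the two descriptions, $z \in J_n$ iff all four of $z+z^T$, $i(z+z^T)$, $z-z^T$, $i(z-z^T)$ lie in $M_n(V_1)$.

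Now I would observe that, because $M_n(V_1)$ is an $\reals$-vector space, the pair of conditions $w \in M_n(V_1)$ and $iw \in M_n(V_1)$ together say exactly $w \in M_n(V_1) \cap iM_n(V_1) = M_n(V_1 \cap iV_1) = M_n(J_1)$ (the middle equality being entrywise). Applying this with $w = z + z^T$ and with $w = z - z^T$ yields: $z \in J_n$ iff $z + z^T \in M_n(J_1)$ and $z - z^T \in M_n(J_1)$. From this both inclusions are immediate: if $z \in M_n(J_1)$ then $z^T \in M_n(J_1)$, hence $z \pm z^T \in M_n(J_1)$ and so $z \in J_n$; conversely if $z \in J_n$ then $z = \tfrac12\bigl((z+z^T)+(z-z^T)\bigr) \in M_n(J_1)$.

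I do not expect a serious obstacle here; the only real content is the bookkeeping in the second paragraph — correctly translating membership in $iV_n$ into conditions on $z \pm z^T$ via Lemma \ref{lem1} — after which the collapse to $M_n(J_1)$ and both inclusions are routine.
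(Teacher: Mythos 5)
Your proof is correct, but it takes a different route to the identity $J_n = M_n(J_1)$ than the paper does. The paper's trick is to observe that $D_n := C_n \cap iC_n$ is again a matrix cone and that $J_n = D_n \cap -D_n$; it then invokes Lemma \ref{lem2} as a black box to get $\span J_n = M_n(\span J_1)$, and finally uses the (shared) first step --- that $J_n$ is a $\complex$-vector space, so the spans are redundant --- to conclude. You instead bypass Lemma \ref{lem2} entirely and work directly from the entrywise characterization in Lemma \ref{lem1}, applying it once to $V_n$ and once (via the substitution $z \mapsto -iz$) to $iV_n$, then collapsing the four resulting conditions to $z \pm z^T \in M_n(J_1)$. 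Your bookkeeping checks out: the computation $(-iz) + (-iz)^T = -i(z+z^T)$, $i\bigl((-iz)-(-iz)^T\bigr) = z - z^T$, the use of $-M_n(V_1) = M_n(V_1)$, and the entrywise identity $M_n(V_1) \cap iM_n(V_1) = M_n(J_1)$ are all valid, as is the closing decomposition $z = \tfrac12\bigl((z+z^T)+(z-z^T)\bigr)$. The paper's argument is shorter and reuses existing machinery; yours is more explicit and self-contained, at the cost of redoing by hand a computation that Lemma \ref{lem2} already packages. Both are sound.
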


\begin{proof} Since each $J_n$ is a $\reals$-vector space and since $iJ_n =
J_n$, it follows that $J_n$ is a $\complex$-vector space. Since
$\{C_m \cap iC_m\}$ is a matrix cone and $$J_n = (C_n \cap
iC_n) \cap -(C_n \cap iC_n), $$ it follows that $\span J_n = M_n(
\span J_1)$ by Lemma \ref{lem2}. But $J_n$ is a $\complex$-vector
space, so $J_n = M_n(J_1)$. \end{proof}

\begin{definition}
Let $Z$ be a $\complex$-vector space. A matrix cone
$C=\{C_n\}$ in $Z$ is called \textbf{$\complex$-proper} if
$\cap_{k=0}^3 i^k C_1 = \{0\}$, and the pair $(Z,C)$ is called an
\textbf{accretive matrix-ordered vector space}. In this case, we
often set $Z_{ac}^n = C_n$ and $Z_{ac}=C$. For each $n$, we set
$Z_{sa}^n := iZ_{ac}^n \cap -iZ_{ac}^n$ and $Z_+^n := Z_{sa}^n \cap
Z_{ac}^n$. We refer to the elements of $Z_{ac}^n$, $Z_{sa}^n$ and
$Z_+^n$ as the \textbf{accretive}, \textbf{self-adjoint} and
\textbf{positive} elements (respectively).
\end{definition}


By Lemma \ref{lem3}, we see that whenever $(Z,Z_{ac})$ is an accretive matrix-ordered vector space, $\cap_{k=0}^3 i^k Z_{ac}^n = \{0\}$ for each $n$. The following example motivates the above definition.

\begin{example} \label{concreteAccSp} Let $Z \subset B(H)$ be an operator space. Define $Z_{ac}^n$ to
be the cone of accretive operators, i.e., $Z_{ac}^n = \{ T \in
M_n(Z): Re(T) \geq 0 \}.$ Then $(Z,Z_{ac})$ is an accretive
matrix-ordered vector space. For, if $\pm Re(T), \pm Re(iT) \geq 0$,
then $Re(T)=Im(T)=0$, and hence, $T=0$. If $T \in Z_{sa}^n$, then $Im(T) = 0$, so $T=Re(T)$ is a self-adjoint operator. If $T \in Z_+^n$, then $T=Re(T) \geq 0$, so $T$ is a positive operator. \end{example}



The following lemma shows that the self-adjoint part of an accretive matrix-ordered vector space naturally possesses the structure of a matrix-ordered $*$-vector space (as defined in section 1).

\begin{lemma} \label{adjoint} Let $(Z,Z_{ac})$ be an accretive matrix-ordered vector space, and let $V = \span Z_{sa}^1$. Then for each $n$, $\span Z_{sa}^n = M_n(V)$ and there exists a unique
conjugate linear involution $*: M_n(V) \rightarrow M_n(V)$ such
that $z^*=z$ for each $z \in Z_{sa}^n$. Moreover, if $z = [z_{k,l}]
\in M_n(V)$, then $z^* = [z_{k,l}^*]^T$. If $z \in M_n(V)$, then
$z \in Z_{ac}^n$ if and only if $Re(z) \in Z_+^n$. Moreover, each $Z_+^n$ is proper. \end{lemma}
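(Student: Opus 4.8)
The plan is to build the involution at the first matrix level and then propagate it up via Lemma \ref{lem2}. First I would record that $Z_{sa}^1$ is an $\reals$-vector space (it is $iZ_{ac}^1 \cap -iZ_{ac}^1$, a cone equal to its own negative), so $V = \span Z_{sa}^1 = Z_{sa}^1 + iZ_{sa}^1$ as a $\complex$-vector space, and every $v \in V$ decomposes as $v = a + ib$ with $a,b \in Z_{sa}^1$. Uniqueness of this decomposition follows because $Z_{sa}^1 \cap iZ_{sa}^1 \subseteq \cap_{k=0}^3 i^k Z_{ac}^1 = \{0\}$ by $\complex$-properness. Hence we may \emph{define} $*$ on $V$ by $(a+ib)^* := a - ib$; this is well defined, conjugate-linear, and an involution, and it is the unique conjugate-linear involution fixing $Z_{sa}^1$ pointwise (any such involution must send $a+ib \mapsto a-ib$). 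For the matrix levels, I would apply Lemma \ref{lem2} to the matrix cone $\{iZ_{ac}^n \cap -iZ_{ac}^n\}_n = \{Z_{sa}^n\}_n$ — which is a matrix cone since $\{iZ_{ac}^n\}$ and $\{-iZ_{ac}^n\}$ are and the intersection of matrix cones is a matrix cone — to conclude $\span Z_{sa}^n = M_n(\span Z_{sa}^1) = M_n(V)$. Then I extend $*$ entrywise-transpose: $[z_{k,l}]^* := [z_{k,l}^*]^T$, which is visibly a conjugate-linear involution on $M_n(V)$.

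Next I must check that this entrywise-transpose $*$ fixes $Z_{sa}^n$ pointwise and is the unique conjugate-linear involution doing so. Uniqueness on $M_n(V)$ once we know it fixes $Z_{sa}^n$: since $\span Z_{sa}^n = M_n(V)$ and an involution fixing a spanning set of real-structure is determined (same argument as at level $1$, because $M_n(V) = Z_{sa}^n + iZ_{sa}^n$ with the sum direct, using $Z_{sa}^n \cap iZ_{sa}^n \subseteq \cap_k i^k Z_{ac}^n = \{0\}$ from the remark after the definition). For the claim that the entrywise-transpose map fixes $Z_{sa}^n$: given $z = [z_{k,l}] \in Z_{sa}^n$, I need $z_{l,k}^* = z_{k,l}$ for all $k,l$. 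Compressing by $e_k, e_l \in M_{n,1}$ as in the proof of Lemma \ref{lem1}, $e_k^* z e_l = z_{k,l}$ and $(e_k + e_l)^* z (e_k + e_l) = z_{k,k} + z_{k,l} + z_{l,k} + z_{l,l}$ all lie in $Z_{sa}^1$ (matrix cone compressions preserve $Z_{sa}^1$), so $z_{k,l} + z_{l,k} \in Z_{sa}^1$; similarly using $e_k + ie_l$ gives $z_{k,l} - z_{l,k} \in iZ_{sa}^1$. Writing $z_{k,l} = a + ib$ and $z_{l,k} = c + id$ in the canonical form, these say $a+c \in Z_{sa}^1$ (automatic) with $b + d = 0$, and $(a-c) + i(b-d) \in iZ_{sa}^1$, i.e. $a = c$; hence $z_{l,k} = a - ib = z_{k,l}^*$, which is exactly $z^* = z$ for the entrywise-transpose involution. (One also needs the diagonal entries $z_{k,k} = e_k^* z e_k \in Z_{sa}^1$ to be self-adjoint, which they are.)

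Then the characterization $z \in Z_{ac}^n \iff Re(z) \in Z_+^n$: for the forward direction, $z \in Z_{ac}^n$ gives $z^* \in Z_{ac}^n$ — this needs checking, but $z^* = a - ib$ where $z = a+ib$ with $a = Re(z) \in Z_{sa}^n$, $ib = i\,Im(z)$... more carefully, from $z, z^* \in Z_{sa}^n + iZ_{sa}^n$ one computes $Re(z) = \tfrac12(z+z^*) \in Z_{sa}^n$, and I claim $Re(z) \in Z_{ac}^n$: indeed $z \in Z_{ac}^n$ and $z^* \in Z_{ac}^n$ would give $Re(z) = \tfrac12(z + z^*) \in Z_{ac}^n$ by the cone property, and being in $Z_{sa}^n$ too puts it in $Z_+^n$. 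So the crux is showing $z^* \in Z_{ac}^n$ whenever $z \in Z_{ac}^n$. I would verify this at level $1$ first using a compression/rotation trick analogous to those in Lemma \ref{lem1} (writing $z^*$ in terms of $z$ via the decomposition and the fact that $Z_{sa}^1 \subseteq Z_{ac}^1$), then transfer to level $n$ via entrywise identities like those in the proof of Lemma \ref{lem1} (expressing $z^* = [z_{k,l}^*]^T$ as a sum $\tfrac12\sum z_{k,l}^* \otimes E_{l,k}$ built from compressions of $z$). For the reverse direction, if $Re(z) \in Z_+^n$ then write $z = Re(z) + iIm(z)$; since $Re(z) \in Z_{ac}^n$ and $iIm(z) \in Z_{sa}^n \cdot i$... actually $Im(z) \in Z_{sa}^n$ so $\pm i\, Im(z) \in Z_{ac}^n$ by definition of $Z_{sa}^n = iZ_{ac}^n \cap -iZ_{ac}^n$, wait that gives $i\,Im(z) = -(-i\,Im(z))$ and $-i\,Im(z) \in Z_{ac}^n$ iff $Im(z) \in iZ_{ac}^n$... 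I need to be careful: $w \in Z_{sa}^n$ means $iw \in Z_{ac}^n$ and $-iw \in Z_{ac}^n$, so taking $w = Im(z)$, both $\pm i\,Im(z) \in Z_{ac}^n$, in particular $i\,Im(z) \in Z_{ac}^n$; adding to $Re(z) \in Z_{ac}^n \supseteq Z_+^n$ gives $z \in Z_{ac}^n$. Finally, properness of $Z_+^n$: if $z, -z \in Z_+^n$ then $z \in Z_{sa}^n$ and $z, -z \in Z_{ac}^n$, so $z \in Z_{ac}^n \cap -Z_{ac}^n$; combined with $z \in Z_{sa}^n = iZ_{ac}^n \cap -iZ_{ac}^n$ this forces $z \in \cap_{k=0}^3 i^k Z_{ac}^n = \{0\}$.

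The main obstacle I anticipate is establishing that $Z_{ac}^n$ is closed under $*$ (equivalently, the forward implication $z \in Z_{ac}^n \Rightarrow Re(z) \in Z_+^n$), since all the other pieces are bookkeeping with the canonical decomposition $v = a + ib$ and the uniqueness afforded by $\complex$-properness. That step genuinely uses the matrix-cone axiom through specific compressions, much like the identities displayed in the proof of Lemma \ref{lem1}; the level-$n$ case should reduce to level $1$ by the same $\otimes E_{k,l}$ expansion technique, so the real content is the level-$1$ statement that $z \in Z_{ac}^1 \Rightarrow z^* \in Z_{ac}^1$, which I would prove by exhibiting $z^*$ (or $2\,Re(z) = z + z^*$) as a non-negative combination of compressions of $z$ and elements of $Z_{sa}^1 \subseteq Z_{ac}^1$.
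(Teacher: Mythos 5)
Most of your argument is sound and follows the paper's route: the direct-sum decomposition $M_n(V)=Z_{sa}^n+iZ_{sa}^n$ with uniqueness coming from $\complex$-properness, the definition of $*$ and its uniqueness, the entrywise-transpose formula via compressions (the paper obtains this by invoking Lemma \ref{lem1} for the matrix cone $\{Z_{sa}^m\}$, but your explicit compression computation is the same content), and the properness of $Z_+^n$ are all correct.

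The genuine gap is the forward implication $z\in Z_{ac}^n\Rightarrow Re(z)\in Z_+^n$, which you single out as the main obstacle and leave as a sketch. That sketch would not work as written: you propose to exhibit $z^*$ (or $2Re(z)$) as a nonnegative combination of compressions of $z$ and of ``elements of $Z_{sa}^1\subseteq Z_{ac}^1$,'' but that inclusion is false. In the concrete model $Z_{sa}^1$ consists of the self-adjoint operators in $Z$ while $Z_{ac}^1$ consists of the accretive ones, and a self-adjoint operator with negative spectrum (e.g.\ $-I$) is not accretive. Moreover, no compression argument is needed here: the very observation you use for the reverse direction also settles the forward one. For $z\in M_n(V)$ write $z=Re(z)+iIm(z)$ with $Re(z),Im(z)\in Z_{sa}^n$; since $Z_{sa}^n=iZ_{ac}^n\cap -iZ_{ac}^n$, both $\pm iIm(z)$ lie in $Z_{ac}^n$, so if $z\in Z_{ac}^n$ then $Re(z)=z+(-iIm(z))\in Z_{ac}^n+Z_{ac}^n\subseteq Z_{ac}^n$, and being self-adjoint it lies in $Z_+^n=Z_{sa}^n\cap Z_{ac}^n$. (The $*$-invariance of $Z_{ac}^n\cap M_n(V)$ then follows for free, rather than being needed as an input.) This one-line argument is exactly how the paper proves the equivalence.
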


\begin{proof} By Lemma \ref{lem2}, $\span Z_{sa}^n = M_n(V)$. Now, suppose that $z \in M_n(V)$ and that $z = a + ib = c + id$ with $a,b,c,d \in Z_{sa}^n$. Since $a-c = i(d-b)$ and since $a-c, d-b \in Z_{sa}^n$, we see that $a-c \in Z_{sa}^n \cap iZ_{sa}^n = \{0\}$. Hence, $a=c$ and $d=b$. We conclude that each $z \in M_n(V)$ has a unique decomposition as $z = a + ib$ with $a,b \in Z_{sa}^n$. Define
$(a+ib)^* = (a-ib)$. Then $*$ is clearly the unique conjugate linear
involution on $M_n(V)$ such that $a^*=a$ when $a \in Z_{sa}^n$.

Next, assume that $z = [z_{k,l}] \in M_n(V)$ and that $z = a_1 +
ia_2$ with $a_j = [a_{k,l}^j] \in Z_{sa}^n$ for each $j$. Then the
$(k,l)$ entry of $z^*$ is $a_{k,l}^1 - ia_{k,l}^2$. Now, $$
a_{k,l}^j = \frac{1}{2}(a_{k,l}^j + a_{l,k}^j) + \frac{1}{2i}i
(a_{k,l}^j - a_{l,k}^j)$$ for each $j$. By applying Lemma \ref{lem1}
to $\{Z_{sa}^m\}$, we see that $a_{k,l}^j + a_{l,k}^j, i(a_{k,l}^j -
a_{k,l}^j) \in Z_{sa}^1$ for each $j$. Hence, \begin{eqnarray}
(a_{k,l}^j)^* & = & (\frac{1}{2}(a_{k,l}^j + a_{l,k}^j) +
\frac{1}{2i} i(a_{k,l}^j - a_{l,k}^j))^* \nonumber \\ & = &
\frac{1}{2}(a_{k,l}^j + a_{l,k}^j) - \frac{1}{2i} i(a_{k,l}^j -
a_{l,k}^j) = a_{l,k}^j \nonumber \end{eqnarray} for each $j$. So $
z_{l,k}^* = (a_{l,k}^1 + i a_{l,k}^2)^* = a_{k,l}^1 - ia_{k,l}^2.$
Thus the $(k,l)$ entry of $z^*$ is $z_{l,k}^*$. Therefore $z =
[z_{k,l}^*]^T$.

To prove the final claims, let $z \in M_n(V)$. Then $z = Re(z) +
iIm(z)$ with $Re(z), Im(z) \in Z_{sa}^n$. Because $\pm iIm(z) \in
Z_{ac}^n$, we see that $Re(z) \in Z_{ac}^n$ if and only if $z =
Re(z) + iIm(z) \in Z_{ac}^n$. Finally, if $\pm z \in Z_+^n$, then
$\pm iz, \pm z \in Z_{ac}^n$, and hence $z=0$ (since $Z_{ac}^n$ is
$\complex$-proper). So each $Z_+^n$ is proper. \end{proof}



Assume that $(Z,Z_{ac})$ and $(W,W_{ac})$ are two accretive
matrix-ordered vector spaces. We call a map $\phi:Z \rightarrow W$
\textbf{real-completely positive} if for each $n \in \naturals$,
$\phi^{(n)}(Z_{ac}^n) \subset W_{ac}^n$. If $\phi$ is one-to-one,
and if both $\phi$ and $\phi^{-1}$ (restricted to the range of
$\phi$) are real-completely positive, then we call $\phi$ a
\textbf{real-complete order embedding}. A surjective real-complete
order embedding is called a \textbf{real-complete order
isomorphism}.

We call an accretive matrix-ordered vector space
\textbf{self-adjoint} if $\span Z_{sa}^1 = Z$. We conclude this
section by showing that every accretive matrix-ordered vector space
is contained in a ``smallest" self-adjoint accretive matrix-ordered
vector space.

\begin{definition} Let $(Z,Z_{ac})$ be an accretive matrix-ordered vector space. We call a pair $((V,V_{ac}),\phi:Z \rightarrow V)$ a $\mathbf{*}$\textbf{-closure} of $(Z,Z_{ac})$ provided that $(V,V_{ac})$ is a self-adjoint accretive matrix-ordered vector space, $\phi$ is a real-complete order embedding, and $V = \phi(Z) + \phi(Z)^*$ as a $*$-vector space with respect to the unique involution $*$ on $V$ (provided in Lemma \ref{adjoint}). \end{definition}

\begin{lemma} \label{starClosureLem} Suppose that $((V,V_{ac}),\phi)$ is a $*$-closure for $(Z,Z_{ac})$.
Then $\phi^{(n)}(x) + \phi^{(n)}(y)^* \in V_{ac}^n$ if and only if
$x + y \in Z_{ac}^n$. Moreover, $\phi^{(n)}(x) + \phi^{(n)}(y)^* =
0$ if and only if $x,y \in \span Z_{sa}^n$ and $y = -x^*$.
\end{lemma}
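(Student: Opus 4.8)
The plan is to push everything through $\phi$ and $\phi^{-1}$, using the ``real part'' description of accretivity from Lemma \ref{adjoint}. The one computation that makes it all work is the identity, valid in any $*$-vector space, that $Re(u + v^*) = \tfrac{1}{2}(u + v^* + u^* + v) = Re(u + v)$. Applying this in $M_n(V)$ with $u = \phi^{(n)}(x)$ and $v = \phi^{(n)}(y)$ gives
$$Re\big(\phi^{(n)}(x) + \phi^{(n)}(y)^*\big) = Re\big(\phi^{(n)}(x+y)\big).$$
Since the $*$-closure $V$ is self-adjoint, $M_n(V) = \span V_{sa}^n$, so Lemma \ref{adjoint} applied to $(V,V_{ac})$ holds on all of $M_n(V)$ and tells us that $w \in V_{ac}^n$ iff $Re(w) \in V_+^n$. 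Combining this with the displayed identity, $\phi^{(n)}(x) + \phi^{(n)}(y)^* \in V_{ac}^n$ iff $\phi^{(n)}(x+y) \in V_{ac}^n$; and since $\phi$ is a real-complete order embedding, the latter holds iff $x + y \in Z_{ac}^n$ (forward using that $\phi^{-1}$ is real-completely positive on the range of $\phi$, backward using that $\phi$ is). This proves the first assertion.

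For the second assertion, I would first record that $\phi^{(n)}$ intertwines the involutions on $\span Z_{sa}^n$ and $M_n(V)$: for $z \in Z_{sa}^n$ we have $\pm i z \in Z_{ac}^n$, hence $\pm i \phi^{(n)}(z) \in V_{ac}^n$, hence $\phi^{(n)}(z) \in V_{sa}^n$, so $\phi^{(n)}(z)^* = \phi^{(n)}(z) = \phi^{(n)}(z^*)$; extending conjugate-linearly, $\phi^{(n)}(w^*) = \phi^{(n)}(w)^*$ for every $w \in \span Z_{sa}^n$. The ``if'' direction is then immediate: if $x \in \span Z_{sa}^n$ and $y = -x^*$, then $\phi^{(n)}(y)^* = -\phi^{(n)}(x^*)^* = -\phi^{(n)}(x)$, so the sum vanishes.

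For the ``only if'' direction, suppose $\phi^{(n)}(x) + \phi^{(n)}(y)^* = 0$. Feeding the pairs $(x,y)$ and $(-x,-y)$ into the first assertion shows $x + y \in Z_{ac}^n \cap -Z_{ac}^n$. Multiplying the hypothesis by $i$ and using $i\,\phi^{(n)}(y)^* = \phi^{(n)}(-iy)^*$, the same argument applied to $(ix,-iy)$ and $(-ix,iy)$ shows $i(x-y) \in Z_{ac}^n \cap -Z_{ac}^n$. But $Z_{ac}^n \cap -Z_{ac}^n = -i\,Z_{sa}^n \subseteq \span Z_{sa}^n$, straight from the definition $Z_{sa}^n = iZ_{ac}^n \cap -iZ_{ac}^n$ (together with Lemma \ref{lem1}, which makes $Z_{sa}^n$ a real vector space). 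Hence $x+y$ and $x-y$, and therefore $x$ and $y$, lie in the complex subspace $\span Z_{sa}^n$. Now the intertwining identity rewrites the hypothesis as $\phi^{(n)}(x + y^*) = 0$, and injectivity of $\phi^{(n)}$ gives $x + y^* = 0$, i.e.\ $y = -x^*$.

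I do not anticipate a serious obstacle; the content is the $Re(u+v^*) = Re(u+v)$ identity plus careful bookkeeping. The two points needing attention are (i) invoking Lemma \ref{adjoint} for $V$ — where self-adjointness makes its conclusions available on all of $M_n(V)$ — rather than for $Z$, and (ii) remembering that $Z_{ac}^n$ need not be a proper cone, so that $Z_{ac}^n \cap -Z_{ac}^n$ cannot be forced to be $\{0\}$ and is instead controlled only through its identification with a scalar rotate of $Z_{sa}^n$.
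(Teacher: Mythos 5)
Your proof is correct and follows essentially the same route as the paper's: the first claim is handled identically (reduce accretivity of $\phi^{(n)}(x)+\phi^{(n)}(y)^*$ to $Re(\phi^{(n)}(x+y)) \in V_+^n$ via Lemma \ref{adjoint}), and for the kernel statement both arguments extract $\pm(x+y), \pm i(x-y) \in Z_{ac}^n$ and identify $Z_{ac}^n \cap -Z_{ac}^n$ with a rotate of $Z_{sa}^n$. The only cosmetic difference is that you finish via the intertwining of involutions and injectivity of $\phi^{(n)}$, where the paper instead writes out the decomposition $x = \frac{1}{2}(x-y) + \frac{1}{2i}i(x+y)$, computes $x^* = -y$ directly, and uses $\complex$-properness of $V_{ac}^n$ for the converse.
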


\begin{proof} First, notice that $\phi^{(n)}(x) + \phi^{(n)}(y)^* \in
V_{ac}^n$ if and only if $Re(\phi^{(n)}(x) + \phi^{(n)}(y)^*) \in
V_+^n$ by Lemma \ref{adjoint}. But $Re(\phi^{(n)}(x) +
\phi^{(n)}(y)^*) = Re(\phi^{(n)}(x) + \phi^{(n)}(y))$. So
$\phi^{(n)}(x) + \phi^{(n)}(y)^* \in V_{ac}^n$ if and only if $x + y
\in Z_{ac}^n$, since $\phi$ is a real-complete order embedding. Now,
assume that $\phi(x) + \phi(y)^* = 0$. Since $i(\phi(x) + \phi(y)^*)
= \phi(ix) - \phi(iy)^*$, this implies that $\pm (x+y), \pm i(x-y)
\in Z_{ac}^n$. Hence, $(x-y),i(x+y) \in Z_{sa}^n$. Since $x =
\frac{1}{2}(x - y) + \frac{1}{2i}i(x + y)$, we see that $x \in \span
Z_{sa}^n$ and $y = -x^*$. On the other hand, if $y = -x^*$, then
$\pm Re(x+y) = \pm Re(i(x-y)) = 0$, and hence $\pm (\phi^{(n)}(x) +
\phi^{(n)}(y)), \pm i(\phi^{(n)}(x) + \phi^{(n)}(y)^*) = 0$. So
$\phi^{(n)}(x) + \phi^{(n)}(y)^* = 0$, since $V_{ac}^n$ is
$\complex$-proper. \end{proof}

The following theorem shows that every accretive matrix-ordered vector space has a unique $*$-closure.

\begin{theorem} \label{adjointify} Let $(Z,Z_{ac})$ be an accretive matrix-ordered
vector space. Then there exists a $*$-closure $((V,V_{ac}),\phi)$.
Moreover, if $((V',V'_{ac}),\psi)$ is some other $*$-closure of
$(Z,Z_{ac})$, then there exists a real-complete order isomorphism
$j:V \rightarrow V'$.
\end{theorem}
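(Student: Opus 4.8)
The plan is to build the $*$-closure as an explicit quotient of $Z\oplus\overline{Z}$, and then to read off uniqueness directly from Lemma~\ref{starClosureLem}, which already pins down the linear and order structure that any $*$-closure must carry. For the construction, put $W:=\span Z_{sa}^1\subseteq Z$, equipped with the involution $*$ of Lemma~\ref{adjoint}, and let $N:=\{(w,-w^*):w\in W\}$. I would form $V:=(Z\oplus\overline{Z})/N$, where $\overline{Z}$ is $Z$ with the conjugate scalar multiplication; the conjugate twist is exactly what makes $N$ a complex subspace, since $(\lambda w)^*=\overline{\lambda}w^*$. Define $\phi(a):=[(a,0)]$ and a conjugate-linear map on $V$ by $[(x,y)]^*:=[(y,x)]$. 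Routine checks show that $N$ is a complex subspace, that $*$ is a well-defined involution, that $\phi$ is injective (if $(a,0)\in N$ then $a^*=0$, so $a=0$), and that every element of $V$ has the form $\phi(a)+\phi(b)^*$; at the $n$-th matrix level (with the conjugate-transpose bookkeeping kept straight) each element of $M_n(V)$ is $\phi^{(n)}(A)+\phi^{(n)}(C)^*$ for some $A,C\in M_n(Z)$. One then defines the cones by
\[
V_{ac}^n:=\{\phi^{(n)}(A)+\phi^{(n)}(C)^*:A+C\in Z_{ac}^n\}.
\]

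The substantive steps are: (i) $V_{ac}^n$ is well-defined; (ii) $\{V_{ac}^n\}$ is a matrix cone; (iii) it is $\complex$-proper; (iv) $V=\span V_{sa}^1$; and (v) $\phi$ is a real-complete order embedding. For (i), if $\phi^{(n)}(A)+\phi^{(n)}(C)^*=\phi^{(n)}(A')+\phi^{(n)}(C')^*$ then unwinding the quotient gives $s:=A-A'\in\span Z_{sa}^n$ and $C-C'=-s^*$, so the two candidates $A+C$ and $A'+C'$ differ by the skew-adjoint element $s-s^*\in\span Z_{sa}^n$; every skew-adjoint element of $\span Z_{sa}^n$ is of the form $iq$ with $q\in Z_{sa}^n$ and hence two-sided accretive, so membership in $Z_{ac}^n$ is unchanged. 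Step (ii) follows from $X^*(\phi^{(n)}(A)+\phi^{(n)}(C)^*)X=\phi^{(k)}(X^*AX)+\phi^{(k)}(X^*CX)^*$ together with $X^*AX+X^*CX=X^*(A+C)X\in Z_{ac}^k$. For (iii), if $v=\phi(a)+\phi(c)^*$ satisfies $\pm v,\pm iv\in V_{ac}^1$, then $\pm(a+c),\pm i(a-c)\in Z_{ac}^1$, forcing $a+c\in iZ_{sa}^1$ and $a-c\in Z_{sa}^1$, and these together give $a\in W$, $c=-a^*$, i.e.\ $v=0$. For (iv), each $\phi(a)$ is a complex combination of the $*$-fixed elements $\phi(a)+\phi(a)^*$ and $\phi(-ia)+\phi(-ia)^*$, and any element $\phi(b)+\phi(b)^*$ lies in $V_{sa}^1$ since conjugating it by $\pm i$ yields $\phi(\pm ib)+\phi(\mp ib)^*$, whose ``$A+C$'' equals $0$. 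Finally (v) is immediate from (i): $\phi^{(n)}(T)=\phi^{(n)}(T)+\phi^{(n)}(0)^*$ lies in $V_{ac}^n$ if and only if $T\in Z_{ac}^n$. A short computation also shows this $*$ fixes $V_{sa}^1$, so by Lemma~\ref{adjoint} it is the canonical involution and $V=\phi(Z)+\phi(Z)^*$ as a $*$-vector space; thus $((V,V_{ac}),\phi)$ is a $*$-closure.

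For uniqueness, given another $*$-closure $((V',V'_{ac}),\psi)$, define $j\colon V\to V'$ by $j(\phi(x)+\phi(y)^*):=\psi(x)+\psi(y)^*$. The second clause of Lemma~\ref{starClosureLem} makes $j$ well-defined: $\phi^{(n)}(x)+\phi^{(n)}(y)^*=0$ forces $x,y\in\span Z_{sa}^n$ and $y=-x^*$, whence $\psi^{(n)}(x)+\psi^{(n)}(y)^*=0$ by the same clause applied to $\psi$. Plainly $j$ is linear and $*$-preserving, and the symmetric recipe defines a two-sided inverse $j'\colon V'\to V$, so $j$ is a linear $*$-isomorphism. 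For the order statement, write $v\in V_{ac}^n$ as $\phi^{(n)}(A)+\phi^{(n)}(C)^*$ (possible since $V=\phi(Z)+\phi(Z)^*$); the first clause of Lemma~\ref{starClosureLem} gives $A+C\in Z_{ac}^n$, and hence $j^{(n)}(v)=\psi^{(n)}(A)+\psi^{(n)}(C)^*\in(V')_{ac}^n$ by the same clause for $\psi$. By symmetry $j^{-1}$ is real-completely positive as well, so $j$ is a real-complete order isomorphism.

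I expect the main obstacle to be bookkeeping rather than ideas: keeping the conjugate-transpose straight when moving between the ``$(x,y)$'' and ``$\phi^{(n)}(A)+\phi^{(n)}(C)^*$'' descriptions of $M_n(V)$, and establishing the well-definedness of the cones in step (i)---where the key point is that the representation ambiguity is precisely a skew-adjoint element of $\span Z_{sa}^n$, which is automatically two-sided accretive. Everything else reduces to routine consequences of $Z_{ac}$ being a $\complex$-proper matrix cone, together with Lemmas~\ref{adjoint} and~\ref{starClosureLem}.
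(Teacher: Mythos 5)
Your construction is essentially the paper's: it also builds $V$ as a quotient of $Z\times Z^*$ (with the same conjugate scalar action on the second factor), sets $\phi(z)=[(z,0)]$, defines the cones by the condition $x+y\in Z_{ac}^n$, and proves uniqueness verbatim via Lemma~\ref{starClosureLem}. The only cosmetic difference is that the paper quotients by $J=\cap_{k=0}^{3}i^{k}C_1$ (a $\complex$-subspace by Lemma~\ref{lem3}, and exactly your $N=\{(w,-w^*):w\in\span Z_{sa}^1\}$), which makes well-definedness of the cones automatic rather than a hand check as in your step (i).
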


\begin{proof} Define $Z \times Z^*$ to be the vector space $\{(x,y):
x,y \in Z\}$ with entry-wise addition and scalar multiplication
defined by $\lambda(x,y) = (\lambda x, \overline{\lambda} y)$. Let
$C_n = \{(x,y) \in Z \times Z^* : x+y \in Z_{ac}^n\}$. Then
$C=\{C_n\}$ is a matrix cone in $Z \times Z^*$. Let $J =
\cap_{k=0}^3 i^k C_1$. By Lemma \ref{lem3}, $J$ is a
$\complex$-subspace of $Z \times Z^*$, and $M_n(J) = \cap_{k=0}^3
i^k C_n$.

Let $V = (Z \times Z^*) / J$ and identify $M_n(V)$ with $M_n(Z
\times Z^*) / M_n(J)$. Set $V_{ac}^n = \{z + M_n(J) : z \in C_n\}$. Then
$V_{ac}=\{V_{ac}^n\}$ is a matrix cone in $V$. Moreover,
$V_{ac}$ is $\complex$-proper, since $ \cap_{k=0}^3 i^k V_{ac}^n =
\{M_n(J)\}$. So $(V,V_{ac})$ is an accretive matrix-ordered vector
space.

We'll write $[(x,y)]$ for the coset $(x,y) + M_n(J)$. To see that
$V$ is self-adjoint, let $[(x,y)] \in M_n(V)$. Then \begin{equation} \label{cosetDecomp} [(x,y)] =
\frac{1}{2}[(x,y) + (y,x)] + \frac{1}{2i}i[(x,y)-(y,x)]. \end{equation} Now,
$$\pm i[(x,y)+(y,x)] = \pm [(ix+iy,-iy-ix)] \in V_{ac}^1$$ and $$\pm
[(x,y) - (y,x) ] = \pm [(x-y,y-x)] \in V_{ac}^1,$$ since $(ix + iy)
+ (-iy -ix) = 0 \in Z_{ac}^1$ and $(x-y) + (y-x) = 0 \in Z_{ac}^1$.
So $$[(x,y)+(y,x)], i[(x,y)-(y,x)] \in V_{sa}.$$ Hence, $V = \span
V_{sa}^1$.

Define $\phi: Z \rightarrow V$ by $\phi(z) = [(z,0)]$. Note that
$\phi(z) = [(0,0)]$ implies that $(z,0) \in J$, i.e., $\pm z,
\pm iz \in Z_{ac}^n$. But this only occurs when $z = 0$. So $\phi$
is one-to-one. Moreover, $[(z,0)] \in V_{ac}^n$ if and only if $z =
z + 0 \in Z_{ac}^n$, so $\phi$ is a real-complete order embedding.
Also, equation (\ref{cosetDecomp}) above implies that
$\phi^{(n)}(z)^* = [(0,z)]$ and hence $[(x,y)] = \phi^{(n)}(x) +
\phi^{(n)}(y)^*$ for each $x,y \in Z$. So $V = \phi(Z) + \phi(Z)^*$.

Finally, if $(V',\psi)$ is some other $*$-closure, define $j:V
\rightarrow V'$ by $j(\phi(x) + \phi(y)^*) = \psi(x) + \psi(y)^*$.
By Lemma \ref{starClosureLem}, $\psi(x) + \psi(y)^* = 0$ if and only
if $y=-x^*$ in $Z$. But this occurs if and only if $\phi(x) +
\phi(y)^* = 0$. Hence, $j$ is well-defined. Also, $j$ is a
real-complete order isomorphism, since $\phi^{(n)}(x) +
\phi^{(n)}(y)^* \in V_{ac}^n$ if and only if $x + y \in Z_{ac}^n$,
which occurs if and only if $\psi^{(n)}(x) + \psi^{(n)}(y)^* \in
(V')_{ac}^{n}$.
\end{proof}


\section{Unital Operator Spaces}

In this section, we briefly demonstrate an abstract characterization
for unital operator spaces, i.e., subspaces of $B(H)$ containing the
identity operator $I_H$. We regard such subspaces abstractly as
accretive matrix-ordered vector spaces with a specified unit $e$. We
begin by axiomatizing the role played by the unit with respect to
the accretive order structure.

Let $(Z,Z_{ac})$ be an accretive matrix-ordered vector space.
We call $e \in Z_{sa}^1$ an \textbf{accretive matrix-order unit} if for each $z \in M_n(Z)$ there is a $t > 0$ such that $te \otimes I_n + z \in Z_{ac}^n$. We call $e$ an \textbf{accretive archimedean matrix-order unit} if $te \otimes I_n + z \in Z_{ac}^n$ for all $t > 0$ implies that $z \in Z_{ac}^n$. In this case, we call the triple $(Z,Z_{ac},e)$ an \textbf{accretive archimedean matrix-order unit space}.


\begin{proposition} \label{opSys} Let $(Z,Z_{ac},e)$ be an accretive archimedean matrix-order unit space, and let $V = \span Z_{sa}^1$ and $V_+ = \{Z_+^n\}$. Then $(V,V_+,e)$ is an archimedean matrix-order unit space. \end{proposition}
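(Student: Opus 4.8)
The plan is to check, one at a time, the three ingredients required of an archimedean matrix-order unit space: that $(V,V_+)$ is a matrix-ordered $*$-vector space, that $e$ is a matrix-order unit for it, and that $e$ is archimedean. The first ingredient is almost entirely supplied by Lemma \ref{adjoint}, which equips $V = \span Z_{sa}^1$ with a conjugate-linear involution satisfying $\span Z_{sa}^n = M_n(V)$ and $z^* = [z_{k,l}^*]^T$. From the unique decomposition $z = Re(z) + iIm(z)$ (with $Re(z), Im(z) \in Z_{sa}^n$) furnished by that lemma, one sees at once that $M_n(V)_{sa} = Z_{sa}^n$; in particular $e \in Z_{sa}^1 = V_{sa}$, so the candidate unit is where it should be.

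It then remains, for the first ingredient, to see that $V_+ = \{Z_+^n\}$ is a proper matrix cone with $Z_+^n \subseteq M_n(V)_{sa}$. Properness of each $Z_+^n$ is the last assertion of Lemma \ref{adjoint}, and $Z_+^n = Z_{sa}^n \cap Z_{ac}^n \subseteq Z_{sa}^n = M_n(V)_{sa}$. The only genuine computation is compression invariance: for $X \in M_{n,k}$ and $z \in Z_+^n$, membership $X^* z X \in Z_{ac}^k$ is immediate because $\{Z_{ac}^m\}$ is a matrix cone, while $z \in Z_{sa}^n$ means $\pm i z \in Z_{ac}^n$, whence $\pm i (X^* z X) = X^*(\pm i z) X \in Z_{ac}^k$ and so $X^* z X \in Z_{sa}^k$; together these give $X^* z X \in Z_+^k$. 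Thus $(V,V_+)$ is a matrix-ordered $*$-vector space.

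For the unit axioms I would argue as follows. Given $x \in M_n(V)_{sa} = Z_{sa}^n$, the accretive matrix-order unit property of $e$ provides $t > 0$ with $t e \otimes I_n + x \in Z_{ac}^n$; since $e^* = e$ the element $e \otimes I_n$ is self-adjoint by the transpose formula above, so $t e \otimes I_n + x \in Z_{sa}^n$ as well, hence $t e \otimes I_n + x \in Z_{sa}^n \cap Z_{ac}^n = V_+^n$, showing $e$ is a matrix-order unit. For the archimedean property, suppose $t e \otimes I_n + x \in V_+^n$ for all $t > 0$; then $t e \otimes I_n + x \in Z_{ac}^n$ for all $t > 0$, so $x \in Z_{ac}^n$ because $e$ is an accretive archimedean matrix-order unit, and $x = (t e \otimes I_n + x) - t e \otimes I_n$ lies in the real vector space $Z_{sa}^n$; hence $x \in Z_+^n = V_+^n$. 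I do not expect any real obstacle here --- the proposition is bookkeeping that recasts the accretive structure on $Z$ as an ordered structure on $V$ --- and the only points demanding care are the identification $M_n(V)_{sa} = Z_{sa}^n$ and the stability of $Z_{sa}^n$ under compressions, both of which follow quickly from Lemma \ref{adjoint} and the matrix-cone axioms.
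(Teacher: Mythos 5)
Your proof is correct and follows the same route as the paper: the paper's own proof simply cites Lemma \ref{adjoint} for the matrix-ordered $*$-vector space structure and declares the unit and archimedean axioms ``easy to verify,'' which is exactly the bookkeeping you carry out (the identification $M_n(V)_{sa}=Z_{sa}^n$, compression-stability of $Z_+^n$, and the two unit axioms). No gaps.
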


\begin{proof} By Lemma \ref{adjoint}, $(V,V_+)$ is a matrix-ordered $*$-vector space. Since $e$ is self-adjoint and $x \in V_+^n$ if and only if $x \in Z_{ac}^n$ and $x$ is self-adjoint, it is easy to verify that $e$ is an archimedean matrix-order unit for $(V,V_+)$. \end{proof}

\begin{corollary} \label{SAUnitalRep} Let $(V,V_{ac},e)$ be a self-adjoint accretive archimedean matrix-order unit
space. Then there exists a Hilbert space $H$ and a real-complete
order embedding $\phi: V \rightarrow B(H)$.
\end{corollary}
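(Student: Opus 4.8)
The plan is to reduce to the Choi--Effros theorem by way of Proposition \ref{opSys}. Since $(V,V_{ac},e)$ is self-adjoint, $\span V_{sa}^1 = V$, so Proposition \ref{opSys} applies with its ambient space taken to be $V$ itself; it yields that $(V,V_+,e)$, with $V_+ = \{V_+^n\}$, is an archimedean matrix-order unit space. Theorem \ref{ChoiEffros} then produces a Hilbert space $H$ and a unital complete order embedding $\phi: V \rightarrow B(H)$ with respect to the order structure $V_+$: that is, $\phi$ is linear and injective, and for each $n$ and each $x \in M_n(V)_{sa}$ we have $x \in V_+^n$ if and only if $\phi^{(n)}(x) \geq 0$ in $M_n(B(H))$. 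I would also record at this point that $\phi$ is self-adjoint, since completely positive maps on operator systems are automatically self-adjoint.

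The remaining step is to check that this same $\phi$ is a real-complete order embedding, i.e., that for every $z \in M_n(V)$ the operator $\phi^{(n)}(z) \in B(H^n)$ is accretive if and only if $z \in V_{ac}^n$. The two ingredients are that the accretive cone is recovered from the positive cone by taking real parts (on both the abstract side, via Lemma \ref{adjoint}, and in $B(H)$ by definition of accretive), and that $\phi^{(n)}$ commutes with $Re$. For the latter I would invoke Lemma \ref{adjoint}: $z^* = [z_{l,k}^*]^T$, so self-adjointness of $\phi$ gives $\phi^{(n)}(z^*) = (\phi^{(n)}(z))^*$ and hence $\phi^{(n)}(Re(z)) = Re(\phi^{(n)}(z))$ for all $z \in M_n(V)$. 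Then, by Lemma \ref{adjoint} again, $z \in V_{ac}^n$ if and only if $Re(z) \in V_+^n$, if and only if (since $\phi$ is a complete order embedding and $Re(z)$ is self-adjoint) $\phi^{(n)}(Re(z)) \geq 0$, if and only if $Re(\phi^{(n)}(z)) \geq 0$, if and only if $\phi^{(n)}(z)$ is accretive. This chain of equivalences shows simultaneously that $\phi$ and $\phi^{-1}$ (restricted to its range) are both real-completely positive; since $\phi$ is already injective, $\phi$ is the desired real-complete order embedding.

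I do not expect a serious obstacle here: essentially all of the content is packed into Proposition \ref{opSys} and the Choi--Effros theorem, and the rest is bookkeeping built on Lemma \ref{adjoint}. The one point that requires genuine care is confirming that the abstract involution on $M_n(V)$ furnished by Lemma \ref{adjoint} is carried by $\phi^{(n)}$ to the operator adjoint on $B(H^n)$ --- this is exactly where self-adjointness of $\phi$ enters --- and, consequently, that the identity $\phi^{(n)} \circ Re = Re \circ \phi^{(n)}$ holds at every matrix level, which is what allows the equivalence with the positive cone $V_+$ to be transferred to the accretive cone $V_{ac}$.
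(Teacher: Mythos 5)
Your proposal is correct and follows the same route as the paper: apply Proposition \ref{opSys} to obtain an archimedean matrix-order unit space, invoke Choi--Effros, and then use the equivalence $z \in V_{ac}^n \Leftrightarrow Re(z) \in V_+^n$ from Lemma \ref{adjoint} to upgrade the complete order embedding to a real-complete order embedding. You simply spell out the "it is clear" step (that a self-adjoint complete order embedding intertwines $Re$ and hence transfers the accretive cones), which the paper leaves implicit.
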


\begin{proof} By Lemma \ref{adjoint} and Proposition \ref{opSys}, $V$ is an archimedean matrix-order unit space. By Theorem \ref{ChoiEffros}, there exists $\phi: V \rightarrow B(H)$ which is a self-adjoint complete order embedding. Since $z \in V_{ac}^n$ if and only if $Re(z) \in V_+^n$ by Lemma \ref{adjoint}, it is clear that $\phi$ is a real-complete order embedding. \end{proof}

We now consider the non-self-adjoint case.

\begin{proposition} \label{PropUnitalStarClosure} Let $(Z,Z_{ac},e)$ be an accretive archimedean matrix-order unit
space, let $((V,V_{ac}), \psi)$ be the $*$-closure of $Z$. Then
$f=\psi(e)$ is an accretive archimedean matrix-order unit for
$(V,V_{ac})$. \end{proposition}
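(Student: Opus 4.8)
The plan is to show that $f=\psi(e)$ satisfies both the matrix-order unit and archimedean properties for $(V,V_{ac})$, using that $V=\psi(Z)+\psi(Z)^*$ together with the characterization of $V_{ac}^n$ provided by Lemma \ref{starClosureLem}. First I would verify $f$ is a matrix-order unit. An arbitrary element of $M_n(V)$ has the form $\psi^{(n)}(x)+\psi^{(n)}(y)^*$ for some $x,y\in M_n(Z)$; since $e$ is an accretive matrix-order unit for $Z$, there is $s>0$ with $se\otimes I_n+x\in Z_{ac}^n$ and $s'>0$ with $s'e\otimes I_n+y\in Z_{ac}^n$. Taking $t=s+s'$, the element $tf\otimes I_n+\psi^{(n)}(x)+\psi^{(n)}(y)^*$ can be written as $\psi^{(n)}(se\otimes I_n+x)+\psi^{(n)}(s'e\otimes I_n+y)^*$ (using $f^*=f$ since $f=\psi(e)$ and $e$ is self-adjoint, hence $\psi(e)\in V_{sa}^1$, so $f\otimes I_n$ splits evenly), and by Lemma \ref{starClosureLem} this lies in $V_{ac}^n$ because $(se\otimes I_n+x)+(s'e\otimes I_n+y)=te\otimes I_n+(x+y)$... wait, that is not obviously in $Z_{ac}^n$. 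So instead I would split $tf\otimes I_n$ as $tf\otimes I_n = \psi^{(n)}(se\otimes I_n)+\psi^{(n)}(s'e\otimes I_n)^*$ only after arranging $s,s'$ appropriately; more cleanly, I would write the given element as $\psi^{(n)}(x)+\psi^{(n)}(y)^*$, pick $t>0$ with $te\otimes I_n+(x+y)\in Z_{ac}^n$ (possible since $x+y\in M_n(Z)$ and $e$ is an accretive matrix-order unit), and then observe $tf\otimes I_n+\psi^{(n)}(x)+\psi^{(n)}(y)^*=\psi^{(n)}(te\otimes I_n+x)+\psi^{(n)}(y)^*$, whose preimage-sum is $(te\otimes I_n+x)+y=te\otimes I_n+(x+y)\in Z_{ac}^n$; Lemma \ref{starClosureLem} then gives membership in $V_{ac}^n$.

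Next I would verify the archimedean property. Suppose $v\in M_n(V)$ with $tf\otimes I_n+v\in V_{ac}^n$ for all $t>0$. Write $v=\psi^{(n)}(x)+\psi^{(n)}(y)^*$. As in the previous paragraph, $tf\otimes I_n+v=\psi^{(n)}(te\otimes I_n+x)+\psi^{(n)}(y)^*\in V_{ac}^n$, so by Lemma \ref{starClosureLem} we get $(te\otimes I_n+x)+y\in Z_{ac}^n$, i.e. $te\otimes I_n+(x+y)\in Z_{ac}^n$, for every $t>0$. Since $e$ is an accretive archimedean matrix-order unit for $Z$, this forces $x+y\in Z_{ac}^n$, and then Lemma \ref{starClosureLem} (the "only if" direction) gives $v=\psi^{(n)}(x)+\psi^{(n)}(y)^*\in V_{ac}^n$. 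Hence $f$ is an accretive archimedean matrix-order unit, and $(V,V_{ac},f)$ is an accretive archimedean matrix-order unit space.

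The one subtlety to pin down carefully—and the step I expect to be the main (minor) obstacle—is the bookkeeping of tensoring with $I_n$ under $\psi^{(n)}$: I need that $\psi^{(n)}(te\otimes I_n+x)=tf\otimes I_n+\psi^{(n)}(x)$, which follows from linearity of $\psi^{(n)}$ and the identity $\psi^{(n)}(e\otimes I_n)=\psi(e)\otimes I_n=f\otimes I_n$ (each diagonal entry of $e\otimes I_n$ is $e$, mapped to $f$). I also need that $f\otimes I_n$ is genuinely split by the decomposition $v=\psi^{(n)}(x)+\psi^{(n)}(y)^*$ in the way claimed; the clean route, as above, is to absorb $tf\otimes I_n$ entirely into the $\psi^{(n)}(\cdot)$ term rather than distributing it, which avoids any issue. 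With that, both directions reduce immediately to the corresponding statements for $e$ in $Z$ via Lemma \ref{starClosureLem}, and no genuinely hard estimate is required.
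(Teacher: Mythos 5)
Your proposal is correct and follows essentially the same route as the paper: absorb $tf\otimes I_n$ into the first summand via $\psi^{(n)}(te\otimes I_n+x)=tf\otimes I_n+\psi^{(n)}(x)$ and reduce both the order-unit and archimedean properties to the corresponding statements for $e$ in $Z$ through the equivalence of Lemma \ref{starClosureLem}. The false start you flag (splitting $t=s+s'$ across both summands) is correctly abandoned in favor of the clean absorption, which is exactly what the paper does.
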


\begin{proof}
Writing the elements of $V$ as $\phi(x) + \phi(y)^*$ for $x,y \in
Z$, we have $\phi^{(n)}(x) + \phi^{(n)}(y)^* \in V_{ac}^n$ if and
only if $x + y \in Z_{ac}^n$ by Lemma \ref{starClosureLem}. For
arbitrary $x,y \in M_n(Z)$, we may choose $t \geq 0$ such that $te
\otimes I_n + x + y \in Z_{ac}^n$, and hence $tf \otimes I_n +
\phi^{(n)}(x) + \phi^{(n)}(y)^* \in V_{ac}^n$. So $f$ is an
accretive matrix-order unit. If $tf\otimes I_n + \phi^{(n)}(x) +
\phi^{(n)}(y)^* \in V_{ac}^n$ for all $t > 0$, then $te \otimes I_n
+ x + y \in Z_{ac}^n$ for all $t > 0$ and hence $x + y \in
Z_{ac}^n$. Consequently, $\phi^{(n)}(x) + \phi^{(n)}(y)^* \in
V_{ac}^n$. So $f$ is archimedean. \end{proof}

We now state the main result of this section.

\begin{theorem} \label{UnitalOpSp} Let $(Z,Z_{ac}^n,e)$ be an accretive archimedean matrix-order unit
space. Then there exists a Hilbert space $H$ and a unital real-complete order embedding $\phi:Z \rightarrow B(H)$. Moreover, $\phi$ is completely isometric with respect to the norm $$ \|z\|_n = \inf \{ t > 0 : Re(
\begin{bmatrix} 0 & 2z \\
0 & 0 \end{bmatrix}) \leq te \otimes I_{2n} \}. $$ \end{theorem}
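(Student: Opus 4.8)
The plan is to bootstrap off the self-adjoint case. First I would form the $*$-closure $((V,V_{ac}),\psi)$ of $(Z,Z_{ac})$ via Theorem~\ref{adjointify}. By Proposition~\ref{PropUnitalStarClosure}, $f:=\psi(e)$ is an accretive archimedean matrix-order unit for $(V,V_{ac})$, and since $V$ is self-adjoint, Proposition~\ref{opSys} (with Lemma~\ref{adjoint}) shows that $(V,\{V_+^n\},f)$ is an archimedean matrix-order unit space. Theorem~\ref{ChoiEffros} then produces a Hilbert space $H$ and a unital complete order embedding $\rho\colon V\to B(H)$, which is also a real-complete order embedding (as in the proof of Corollary~\ref{SAUnitalRep}, using that $z\in V_{ac}^n$ iff $Re(z)\in V_+^n$). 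Put $\phi:=\rho\circ\psi\colon Z\to B(H)$. I would check that a composition of real-complete order embeddings is again one — injectivity is immediate, and accretivity of $\phi^{(n)}(z)$, together with its reflection back to $Z$, follows by composing the two embeddings levelwise — and that $\phi(e)=\rho(f)=I_H$. This gives the desired unital real-complete order embedding.

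For the norm statement, the key point is that the displayed formula is really an intrinsic condition on $(Z,Z_{ac},e)$. Fix $z\in M_n(Z)$ and set $w:=\begin{bmatrix}0&2z\\0&0\end{bmatrix}\in M_{2n}(Z)$, the expression $Re(w)$ being understood inside $V$ via $\psi$ (equivalently inside $B(H)$ via $\phi$). Both $te\otimes I_{2n}$ and $Re(\psi^{(2n)}(w))=\tfrac12\psi^{(2n)}(w)+\tfrac12\psi^{(2n)}(w)^{*}$ are self-adjoint, so since $V_+^{2n}=V_{sa}^{2n}\cap V_{ac}^{2n}$, the relation $Re(w)\le te\otimes I_{2n}$ means $te\otimes I_{2n}-Re(\psi^{(2n)}(w))\in V_{ac}^{2n}$. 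Now
$$te\otimes I_{2n}-Re(\psi^{(2n)}(w))=\psi^{(2n)}\!\Big(te\otimes I_{2n}-\tfrac12 w\Big)+\psi^{(2n)}\!\Big(-\tfrac12 w\Big)^{*},$$
so by Lemma~\ref{starClosureLem} this holds if and only if $te\otimes I_{2n}-w\in Z_{ac}^{2n}$. Hence $\|z\|_n=\inf\{t>0:\ te\otimes I_{2n}-w\in Z_{ac}^{2n}\}$, a quantity defined purely from $(Z,Z_{ac},e)$; the defining set is nonempty because $e$ is an accretive matrix-order unit, so $\|z\|_n<\infty$.

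Finally I would identify this infimum with the operator norm of $\phi^{(n)}(z)$. Since $\phi$ is a unital real-complete order embedding, $te\otimes I_{2n}-w\in Z_{ac}^{2n}$ holds exactly when $tI_{H^{2n}}-\phi^{(2n)}(w)$ is accretive in $B(H^{2n})$, i.e.
$$tI-\begin{bmatrix}0&\phi^{(n)}(z)\\\phi^{(n)}(z)^{*}&0\end{bmatrix}=Re\!\left(tI-\begin{bmatrix}0&2\phi^{(n)}(z)\\0&0\end{bmatrix}\right)\ge 0.$$
For any bounded operator $A$, the operator $\begin{bmatrix}0&A\\A^{*}&0\end{bmatrix}$ is self-adjoint with norm $\|A\|$ and spectrum symmetric about $0$ (its square is $\mathrm{diag}(AA^{*},A^{*}A)$, of norm $\|A\|^{2}$, and conjugation by $\mathrm{diag}(I,-I)$ negates it), so the inequality above holds precisely when $t\ge\|\phi^{(n)}(z)\|$. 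Taking infima gives $\|z\|_n=\|\phi^{(n)}(z)\|$ for every $n$ and $z$; in particular $\|\cdot\|_n$ is a genuine norm and $\phi$ is completely isometric.

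The only real content, I expect, is in the second paragraph: pinning down the interpretation of $Re(w)$ and then recognizing — via Lemma~\ref{starClosureLem} — that the matrix-gauge-type formula collapses to the intrinsic accretivity condition $te\otimes I_{2n}-w\in Z_{ac}^{2n}$. Once that is in hand, the passage to $\|\phi^{(n)}(z)\|$ rests only on the elementary spectral fact about $\begin{bmatrix}0&A\\A^{*}&0\end{bmatrix}$, and the construction of $\phi$ is a routine assembly of Theorems~\ref{adjointify} and~\ref{ChoiEffros} with Propositions~\ref{opSys} and~\ref{PropUnitalStarClosure}.
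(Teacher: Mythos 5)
Your proposal is correct and follows essentially the same route as the paper: the paper's proof simply composes the $*$-closure embedding (Proposition~\ref{PropUnitalStarClosure}) with the representation from Corollary~\ref{SAUnitalRep}, and cites Lemma~3.1 of \cite{paulsen2002completely} for the norm identity $\|z\|_n=\|\phi^{(n)}(z)\|$. The only difference is that you spell out what the paper delegates to citations --- the reduction of the condition $Re(w)\le te\otimes I_{2n}$ to the intrinsic statement $te\otimes I_{2n}-w\in Z_{ac}^{2n}$ via Lemma~\ref{starClosureLem}, and the spectral fact about $\begin{bmatrix}0&A\\A^{*}&0\end{bmatrix}$ --- and both of these details are handled correctly.
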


\begin{proof} The map $\phi$ is obtained by composing the maps from Corollary \ref{SAUnitalRep} and Proposition \ref{PropUnitalStarClosure}. The observation that the map $\phi$ in Theorem \ref{UnitalOpSp} satisfies
$\|z\|_n = \|\phi^{(n)}(z)\|_{B(H^n)}$ follows from Lemma 3.1 of \cite{paulsen2002completely} (see also Proposition 13.3 of
\cite{paulsen2002completely}). \end{proof}


\section{Matrix gauge spaces}

In this section, we will arrive at the main result of this paper
(Theorem \ref{mainThm}). Our goal here is to characterize the
(possibly non-unital) subspaces of $B(H)$ as matrix gauge spaces. Let $Z$ be a $\complex$-vector space. We call a sequence of
functions $\nu=\{\nu_n: M_n(Z) \rightarrow [0,\infty)\}$ a
\textbf{matrix-compatible function} provided that for each
$A \in M_n(Z), B \in M_m(Z)$, and each
scalar matrix $X \in M_{n,k}$, the following conditions hold.
\begin{enumerate} \item $\nu_k(X^*AX) \leq \|X\|^2 \nu_n(A)$. \item
$\nu_{n+m}(A \oplus B) = \max \{ \nu_n(A), \nu_m(B) \}$.
\end{enumerate}

\begin{lemma} \label{coneFromGauge} Let $\nu$ be a matrix-compatible
function on a $\complex$-vector space $Z$. For each $n \in
\naturals$, let $$C_n = \{z \in M_n(V): \nu_n(z)=0 \}, J_n = C_n
\cap -C_n \cap iC_n \cap -iC_n.$$ Then $\{C_n\}$ is a matrix
cone in $Z$, each $J_n$ is a $\complex$-vector space, and $J_n =
M_n(J_1)$. \end{lemma}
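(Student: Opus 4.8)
The plan is to verify the three assertions essentially by unwinding the definitions and applying the earlier lemmas. First I would show that $\{C_n\}$ is a matrix cone. That $C_n$ is closed under addition follows from subadditivity of $\nu_n$ — but wait, the matrix-compatible axioms as stated only give compression and direct-sum behavior, not subadditivity; however, for $z, w \in C_n$ one can write $z + w = X^*(z \oplus w)X$ where $X = \begin{bmatrix} I_n \\ I_n \end{bmatrix} \in M_{2n,n}$, so $\nu_n(z+w) \leq \|X\|^2 \nu_{2n}(z \oplus w) = 2 \max\{\nu_n(z), \nu_n(w)\} = 0$. Closure under nonnegative scalars is immediate since $\nu_n(tz) = \nu_n((\sqrt{t} I_n)^* z (\sqrt{t} I_n)) \leq t\,\nu_n(z) = 0$ for $t > 0$ (and the $t=0$ case is trivial once we know $\nu_n(0) = 0$, which follows from axiom (2) with a degenerate block or from the scalar computation). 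Finally, for $X \in M_{n,k}$ and $z \in C_n$, axiom (1) gives $\nu_k(X^*zX) \leq \|X\|^2 \nu_n(z) = 0$, so $X^* C_n X \subseteq C_k$. Hence $\{C_n\}$ is a matrix cone.

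Once $\{C_n\}$ is known to be a matrix cone, the remaining two claims are immediate consequences of Lemma \ref{lem3}. Indeed, Lemma \ref{lem3} is stated for an arbitrary matrix cone $\{C_n\}$ in a $\complex$-vector space $Z$: setting $V_n = C_n \cap -C_n$ and $J_n = V_n \cap iV_n$, it asserts that each $J_n$ is a $\complex$-vector space and $J_n = M_n(J_1)$. Here $J_n = C_n \cap -C_n \cap iC_n \cap -iC_n = (C_n \cap -C_n) \cap i(C_n \cap -C_n)$, since $i(C_n \cap -C_n) = iC_n \cap -iC_n$; so this $J_n$ coincides exactly with the $J_n$ of Lemma \ref{lem3}. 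Therefore each $J_n$ is a $\complex$-vector space and $J_n = M_n(J_1)$, completing the proof.

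The only genuinely non-routine point is the derivation of subadditivity (and of $\nu_n(tz) \le t\,\nu_n(z)$) from the two matrix-compatibility axioms, i.e., recognizing the trick of realizing a sum or a scalar multiple as a compression of a direct sum. This is the step I would expect a careful reader to pause on, but it is the standard device used throughout the operator-space literature and presents no real difficulty. Everything else is bookkeeping: matching the definition of $J_n$ given here with the hypotheses of Lemma \ref{lem3} and quoting that lemma verbatim.

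I should also note one small subtlety: the statement defines $C_n = \{z \in M_n(V) : \nu_n(z) = 0\}$ with a ``$V$'' that is presumably a typo for $Z$ (the space on which $\nu$ is defined); I would silently read it as $M_n(Z)$ throughout, consistent with the hypothesis that $\nu$ is a matrix-compatible function on $Z$. With that reading the proof is as above, and no further argument is needed.
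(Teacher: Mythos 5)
Your proof is correct and follows essentially the same route as the paper: the same compression-of-a-direct-sum trick for additivity, the same $\sqrt{t}I_n$ conjugation for positive homogeneity, and the same appeal to Lemma \ref{lem3} for the claims about $J_n$ (and yes, $M_n(V)$ is a typo for $M_n(Z)$). No gaps.
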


\begin{proof} Let $x,y \in C_n$. Since $x+y =
\begin{bmatrix} I_n & I_n \end{bmatrix} (x \oplus y) \begin{bmatrix} I_n & I_n
\end{bmatrix}^*$, we have \[ \nu_n(x + y) \leq \| \begin{bmatrix} I_n & I_n
\end{bmatrix} \|^2 \max\{ \nu_n(x), \nu_n(y)\} = 0. \] So $x+y \in C_n$. Also, for each $t
\geq 0$, $\nu_n(tx) = \nu_n((t^{1/2}I_n)x(t^{1/2}I_n)) \leq t
\nu_n(x) =0.$ So $tx \in C_n$. Hence, $C_n$ is a cone. If $X \in
M_{n,k}$, then for each $A \in C_n$ we have $\nu_k(X^*AX) \leq
\|X\|^2 \nu_n(A) = 0.$ Hence, $X^*AX \in C_k$. So $\{C_m\}$ is a
matrix cone. The statements concerning $J_n$ follow from Lemma \ref{lem3}. \end{proof}

Let $Z$ be a $\complex$-vector space. We call a map $\nu:Z
\rightarrow [0,\infty)$ a \textbf{gauge} provided that for each $x,y
\in Z$ and $t \geq 0$ the following hold.
\begin{enumerate}
\item $\nu(x+y) \leq \nu(x) + \nu(y)$. \item $\nu(tx)=t\nu(x)$.
\end{enumerate} If $\nu=\{\nu_n\}$ is a matrix-compatible function and
each $\nu_n$ is a gauge, we call $\nu$ a \textbf{matrix gauge}, and
we call the pair $(Z,\nu)$ a \textbf{matrix gauge space}.

Matrix gauges were defined by Effros and Winkler in \cite{Winkler},
where they were regarded as a generalizations of Minkowski gauges
for matrix-convex sets in $Z$. When the corresponding matrix-convex
set fails to be absorbing, the gauges may take on the value
$\infty$. However, this will not be the case for the gauges we
consider.

We call a gauge $\nu$ on a vector space $Z$
$\complex$\textbf{-proper} if whenever $z \in Z$ and $\nu(i^kz)=0$
for each $k \in \{0,1,2,3\}$, we have $z = 0$. We call a matrix
gauge $\nu$ $\complex$\textbf{-proper} if $\nu_1$ is
$\complex$-proper. By Lemma \ref{coneFromGauge}, each $\nu_n$ is
$\complex$-proper whenever $\nu_1$ is $\complex$-proper.

\begin{example} \label{concrete} Let $Z \subset B(H)$ be an operator space. For each $T \in M_n(Z)$, define $$\nu_n(T) = \|(Re(T))_+\|.$$ Then $\nu=\{\nu_n\}$ is a $\complex$-proper matrix gauge on $Z$. \end{example}

We will prove in Lemma \ref{GaugeNormLemma} that the gauge in Example
\ref{concrete} is a $\complex$-proper matrix gauge. We will then show in Theorem \ref{mainThm} that
whenever $(Z,\nu)$ is a matrix gauge space with a $\complex$-proper
gauge there exists a representation of $Z$ as a subspace of
$B(H)$, as in Example \ref{concrete}. Our strategy will be to show that each matrix gauge space embeds into a unital operator space.

\begin{definition} \label{DefUnitize} Let $(Z,\nu)$ be a matrix gauge space with
$\complex$-proper gauge. Define $\tilde{Z}=Z \times \complex$ with
entry-wise addition and scalar multiplication. Identifying
$M_n(\tilde{Z})$ with $M_n(Z) \times M_n$, define for each $(A,X)
\in M_n(\tilde{Z})$
$$ u_n(A,X) = \inf \{ t > 0 : X_t \gg 0,
\nu_n(X_t^{-1/2}AX_{t}^{-1/2}) \leq 1 \}$$ where $X_t := tI_n -
Re(X)$, and $X_t \gg 0$ means that $\sigma(X_t) \subset (0,\infty)$.
We call $(\tilde{Z}, u)$ the \textbf{unitization} of $(Z,\nu)$.
\end{definition}

In the next three lemmas, we will show that the matrix gauge $u$ in Definition \ref{DefUnitize} is a $\complex$-proper matrix gauge. We will write $(A,X) \oplus (B,Y)$ to mean $(A \oplus
B, X \oplus Y)$ and $T^*(A,X)T$ to mean $(T^*AT, T^*XT)$ for scalar
matrices $X,Y,T$ and non-scalar matrices $A$ and $B$.

\begin{lemma} \label{unitLemma} Let $(Z,\nu)$ be a matrix gauge space and $(\tilde{Z},u)$ its unitization. Then each $u_n$ is well-defined and the family $\{u_m\}$ is a
matrix-compatible function on $\tilde{Z}$. \end{lemma}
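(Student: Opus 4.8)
The plan is to verify the two defining conditions of a matrix-compatible function for the family $\{u_n\}$, after first checking that each $u_n$ is well-defined and real-valued. For well-definedness, I would fix $(A,X) \in M_n(\tilde Z)$ and note that for $t$ large enough $X_t = tI_n - Re(X)$ is a positive-definite scalar matrix, so $X_t^{-1/2}$ exists; moreover $\nu_n(X_t^{-1/2}AX_t^{-1/2}) \le \|X_t^{-1/2}\|^2 \nu_n(A) \to 0$ as $t \to \infty$, by condition (1) for $\nu$. Hence the infimum is taken over a nonempty set bounded below by $0$, so $u_n(A,X) \in [0,\infty)$. (Note also that $u_n(A,X)$ is at least the largest eigenvalue of $Re(X)$, since otherwise $X_t \gg 0$ fails, though I only need finiteness here.)

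Next I would prove condition (2), $u_{n+m}((A,X)\oplus(B,Y)) = \max\{u_n(A,X), u_m(B,Y)\}$. The key observation is that for a block-diagonal scalar matrix, $((X\oplus Y)_t)^{-1/2} = X_t^{-1/2} \oplus Y_t^{-1/2}$, that $(X\oplus Y)_t \gg 0$ iff both $X_t \gg 0$ and $Y_t \gg 0$, and that $X_t^{-1/2}(A\oplus B)... $ — more precisely, the conjugated matrix is $(X_t^{-1/2}AX_t^{-1/2}) \oplus (Y_t^{-1/2}BY_t^{-1/2})$, so by condition (2) for $\nu$ its gauge is $\max$ of the two pieces. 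Therefore a given $t$ works for the direct sum iff it works for both summands simultaneously, which immediately gives the $\max$ formula.

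For condition (1), $u_k(T^*(A,X)T) \le \|T\|^2 u_n(A,X)$ for $T \in M_{n,k}$, I would take any $t > u_n(A,X)$, so $X_t \gg 0$ and $\nu_n(X_t^{-1/2}AX_t^{-1/2}) \le 1$, and set $s = \|T\|^2 t$ (assuming $T \ne 0$; the case $T=0$ is trivial since then $T^*XT = 0$ and any $s>0$ works). The main computation is to show $(T^*XT)_s \gg 0$ and to produce a scalar matrix $R$ with $(T^*XT)_s^{-1/2}(T^*AT)(T^*XT)_s^{-1/2} = R^*(X_t^{-1/2}AX_t^{-1/2})R$ and $\|R\| \le 1$; then condition (1) for $\nu$ finishes it. Here $(T^*XT)_s = sI_k - Re(T^*XT) = \|T\|^2 t I_k - T^* Re(X) T \ge T^*(tI_n - Re(X))T = T^* X_t T \ge 0$, using $\|T\|^2 I_n \ge T T^*$... wait, one needs $\|T\|^2 I_k \ge$ the relevant operator; the correct inequality is $T^*(\|T\|^2 I_n)T = \|T\|^2 T^*T$ and $\|T\|^2 I_k \ge$ is not quite it — rather $s I_k - Re(T^*XT) - T^*X_tT = (\|T\|^2 t I_k - t\, T^*T) \ge 0$ since $T^*T \le \|T\|^2 I_k$. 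So $(T^*XT)_s \ge T^*X_tT \ge 0$, hence $(T^*XT)_s \gg 0$ provided it is invertible; I would handle the strictness by working with $t' > t$ slightly larger or noting $s' I_k - Re(T^*XT) \ge T^* X_{t} T + \epsilon I_k$ for suitable $\epsilon$. Then the natural choice is $R = X_t^{1/2} T (T^*XT)_s^{-1/2}$, and from $(T^*XT)_s \ge T^* X_t T = (X_t^{1/2}T)^*(X_t^{1/2}T)$ one gets $R^*R \le I_k$, i.e. $\|R\| \le 1$; a direct substitution gives the displayed identity. Taking the infimum over $t > u_n(A,X)$ yields $u_k(T^*(A,X)T) \le \|T\|^2 u_n(A,X)$.

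The main obstacle is the bookkeeping in condition (1): correctly passing the operator-norm factor $\|T\|^2$ through the $X_t$ substitution (choosing $s = \|T\|^2 t$) and verifying the operator inequality $(T^*XT)_s \ge T^*X_tT$ together with the strict positivity $(T^*XT)_s \gg 0$, and then exhibiting the contraction $R$ that conjugates one expression to the other so that condition (1) for $\nu$ applies. Everything else is routine manipulation of block-diagonal scalar matrices and limits.
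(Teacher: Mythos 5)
Your proposal is correct and follows essentially the same route as the paper: establish finiteness of $u_n$ by taking $t$ large so that $\|X_t^{-1}\|\nu_n(A)\le 1$, get the direct-sum identity from the block-diagonal splitting of $(X\oplus Y)_t^{-1/2}(A\oplus B)(X\oplus Y)_t^{-1/2}$, and for condition (1) set $s=\|T\|^2 t$, use $(T^*XT)_s\ge T^*X_tT\ge 0$, and conjugate by the contraction $R=X_t^{1/2}T(T^*XT)_s^{-1/2}$ (the paper's $W$). The strict-positivity point you flag is handled in the paper in the same way, by perturbing $t$ within the infimum.
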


\begin{proof} For each scalar matrix $X \in M_n$
 there is a $t > 0$ such that $X_t \gg 0$,
since $\sigma(X_t) = t - \sigma(ReX)$. Also, when $X_t \gg 0$,
$\|X_t^{-1}\|$ is $\lambda^{-1}$, where $\lambda$ the smallest eigenvalue of $X_t$.
Thus, when $A \in M_n(Z)$ and $\nu_n(A) \neq 0$ we can choose $t>0$
large enough that $\|X_t^{-1}\| \leq \nu_n(A)^{-1}$, and thus
$\nu_n(X_t^{-1/2} A X_t^{-1/2}) \leq \|X_t^{-1}\| \nu_n(A) \leq 1$.
If $\nu_n(A) = 0$, then $\nu_n(X_t^{-1/2} A X_t^{-1/2}) \leq \|X_t^{-1}\| \nu_n(A) = 0$ for all $t$. It follows that $u_n(A,X)$ is well defined.

Now, let $A \in M_n(Z), B \in M_k(Z), X \in M_n$, and $Y \in M_k$.
To see that $u_{n+k}(A \oplus B, X \oplus Y) = \max\{u_n(A,X),
u_k(B,Y)\}$, notice that $(X \oplus Y)_t \gg 0$ if and only if $X_t
\gg 0$ and $Y_t \gg 0$, since $\sigma((X \oplus Y)_t) = \sigma(X_t)
\cup \sigma(Y_t)$. Also, $$ \nu_{n+k}( (X \oplus Y)_t^{-1/2} (A
\oplus B) (X \oplus Y)_t^{-1/2})$$ is equal to $$ \max\{
\nu_n(X_t^{-1/2} A X_t^{-1/2}), \nu_k(Y_t^{-1/2} B Y_t^{-1/2}) \} $$
since
\[ (X \oplus Y)_t^{-1/2} (A \oplus B) (X \oplus Y)_t^{-1/2} =
X_t^{-1/2} A X_t^{-1/2} \oplus Y_t^{-1/2} B Y_t^{-1/2}. \] Hence,
$$\nu_{n+k}( (X \oplus Y)_t^{-1/2} (A \oplus B) (X \oplus
Y)_t^{-1/2}) \leq 1$$ if and only if $\nu_n(X_t^{-1/2} A X_t^{-1/2})
\leq 1$ and $\nu_k(Y_t^{-1/2} B Y_t^{-1/2}) \leq 1$.

Finally, we show that $u_k(T^*(A,X)T) \leq \|T\|^2 u_n(A,X)$
whenever $T \in M_{n,k}$ and $A \in M_n(Z), X \in M_n$. Set $B =
T^*AT$ and $Y=T^*XT$. To prove the claim, it is enough to show that
for each $t
> u_n(A,X)$, $Y_r \gg 0$ and $\nu_k(Y_r^{-1/2}BY_r^{-1/2}) \leq 1$,
where $r = \|T\|^2 t$. That is, for each $t
> u_n(A,X)$, $\|T\|^2 t > u_k(B,Y)$. Choose $t>0$ such that $X_t \gg 0$ and $\nu_n(X_t^{-1/2} A
X_t^{-1/2}) \leq 1$. Then $Y_r \geq 0$, since
\[ Y_r = \|T\|^2 t I_n - Re(T^*XT) \geq t T^*T - T^*Re(X)T = T^*(X_t)T. \] Since
this holds for all $t > u_n(A,X)$, we may assume that $Y_r \gg 0$.
Let $W = X_t^{1/2} T Y_r^{-1/2}$. Then $Y_r^{-1/2} B Y_r^{-1/2} =
W^* X_t^{-1/2}AX_t^{-1/2} W$.  Also,
\begin{eqnarray} \|W\|^2 & = & \|W^*W\| \nonumber \\ & = & \|
Y_r^{-1/2} (T^* X_t T)Y_r^{-1/2} \| \nonumber \\ & \leq & \|
Y_r^{-1/2} Y_r Y_r^{-1/2} \| = 1 \nonumber
\end{eqnarray} since $0 \leq T^*X_tT \leq Y_r$, as shown above.
Consequently, \begin{eqnarray}
\nu_k(Y_r^{-1/2} B Y_r^{-1/2}) & = & \nu_k( W^*
X_t^{-1/2}AX_t^{-1/2} W) \nonumber \\ & \leq & \|W\|^2
\nu_n(X_t^{-1/2} A X_t^{-1/2}) \nonumber \\ & \leq & 1. \nonumber
\end{eqnarray} This proves the final claim. \end{proof}

\begin{lemma} \label{archimedean order unitgauge} Let $(Z,Z_{ac},e)$ be an accretive
archimedean matrix-order unit space. For each $z \in M_n(Z)$, define
$\nu_e^n(z) := \inf \{ t > 0 : te \otimes I_n - z \in Z_{ac}^n \}$
and set $\nu_e=\{\nu_e^n\}$. Then $(Z,\nu_e)$ is a matrix gauge
space. Moreover, for each $n \in \naturals$, $$Z_{ac}^n = \{z \in
M_n(Z): \nu_{e,n}(-z) = 0\}.$$
\end{lemma}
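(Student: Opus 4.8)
The plan is to verify the three claims in turn: that each $\nu_e^n$ is a gauge, that the family $\{\nu_e^n\}$ is matrix-compatible, and finally that $Z_{ac}^n = \{ z : \nu_e^n(-z) = 0 \}$. For the gauge axioms, fix $n$ and write $\nu = \nu_e^n$. Positive homogeneity $\nu(tz) = t\nu(z)$ for $t > 0$ is immediate from rescaling the infimum, and $\nu(0) = 0$ because $se \otimes I_n \in Z_{ac}^n$ for all $s > 0$ (a matrix-order unit is accretive at every positive scalar multiple, using that $e \in Z_{sa}^1 \subseteq Z_{ac}^1$ and the matrix cone condition). For subadditivity, suppose $se \otimes I_n - z \in Z_{ac}^n$ and $te \otimes I_n - w \in Z_{ac}^n$; adding these (the cones are closed under addition) gives $(s+t)e \otimes I_n - (z+w) \in Z_{ac}^n$, so $\nu(z+w) \leq s + t$, and taking infima yields $\nu(z+w) \leq \nu(z) + \nu(w)$. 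Finiteness of $\nu(z)$ for every $z$ is exactly the matrix-order unit hypothesis applied to $-z$: there is $t > 0$ with $te \otimes I_n + (-z) = te\otimes I_n - z \in Z_{ac}^n$.

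For matrix-compatibility, I would check the two conditions in the definition of matrix-compatible function. For the direct-sum condition: if $se \otimes I_n - A \in Z_{ac}^n$ and $te \otimes I_m - B \in Z_{ac}^m$ with $t \geq s$, then $te \otimes I_{n+m} - (A \oplus B) = (se\otimes I_n - A) \oplus ((t-s)e\otimes I_m) + (0_n \oplus (te\otimes I_m - B) - \text{(something)})$ — more cleanly, $(se \otimes I_n - A) \oplus (te \otimes I_m - B) \in Z_{ac}^{n+m}$ since block-diagonal compressions $X^*(\cdot)X$ with $X = I_n \oplus 0$ and $X = 0 \oplus I_m$ together with additivity put block-diagonal sums of accretive elements back in the cone, and then $te\otimes I_{n+m} - (A \oplus B) \succeq_{Z_{ac}} (se\otimes I_n - A)\oplus(te\otimes I_m - B)$; this shows $\nu_e^{n+m}(A\oplus B) \leq \max\{\nu_e^n(A),\nu_e^m(B)\}$. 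The reverse inequality follows by compressing: $X^* Z_{ac}^{n+m} X \subseteq Z_{ac}^n$ with $X$ the coordinate inclusion $M_{n+m,n}$ shows $\nu_e^n(A) \leq \nu_e^{n+m}(A \oplus B)$, similarly for $B$. For the compression condition $\nu_e^k(X^*AX) \leq \|X\|^2 \nu_e^n(A)$ with $X \in M_{n,k}$ scalar: if $te\otimes I_n - A \in Z_{ac}^n$, then $X^*(te\otimes I_n - A)X \in Z_{ac}^k$; expanding, $X^*(te\otimes I_n)X = t(e \otimes X^*X)$, and since $X^*X \leq \|X\|^2 I_k$ we have $t\|X\|^2 e\otimes I_k - t(e\otimes X^*X) \in Z_{ac}^k$ (a scalar multiple of $e$ tensor a positive scalar matrix, handled again by the matrix cone axiom applied to $e \otimes (\|X\|^2 I_k - X^*X)$, noting $e$ is accretive and $(\|X\|^2 I_k - X^*X)$ is positive semidefinite so factors as $Y^*Y$). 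Adding, $t\|X\|^2 e\otimes I_k - X^*AX \in Z_{ac}^k$, so $\nu_e^k(X^*AX) \leq t\|X\|^2$, and taking the infimum over $t$ gives the claim.

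Finally, for the identity $Z_{ac}^n = \{z : \nu_e^n(-z) = 0\}$: if $z \in Z_{ac}^n$, then for every $t > 0$ we have $te\otimes I_n - (-z) = te\otimes I_n + z \in Z_{ac}^n$ (sum of two accretive elements), so $\nu_e^n(-z) \leq t$ for all $t > 0$, giving $\nu_e^n(-z) = 0$. Conversely, if $\nu_e^n(-z) = 0$, then $te\otimes I_n + z = te \otimes I_n - (-z) \in Z_{ac}^n$ for all $t > 0$, and since $e$ is an \emph{archimedean} accretive matrix-order unit, $z \in Z_{ac}^n$. I expect the main obstacle to be the bookkeeping in the compression and direct-sum steps — specifically, making rigorous the repeated use of "$e$ accretive plus positive scalar matrix stays accretive", which requires writing the positive scalar matrix as $Y^*Y$ and invoking the matrix cone axiom $Y^*(e\otimes I)Y \in Z_{ac}$ together with additivity of the cone; none of this is deep, but it is the place where a careless argument could slip.
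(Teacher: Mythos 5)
Your argument is essentially correct, but it takes a genuinely different route from the paper. The paper proves this lemma by first invoking Theorem \ref{UnitalOpSp} to identify $Z$ with a unital subspace of $B(H)$, rewriting $\nu_e^n(z)$ as $\inf\{t>0: Re(z)\leq tI_H\otimes I_n\}$, and then checking the axioms via concrete operator inequalities (e.g.\ $\langle Re(X^*AX)h,h\rangle = \langle Re(A)Xh,Xh\rangle \leq \|X\|^2 t$). You instead work entirely inside the abstract accretive matrix-ordered vector space, using only the cone axioms, the compression rule $X^*C_nX\subseteq C_k$, and the (archimedean) order-unit axiom. Your approach is more elementary and representation-free, which is arguably cleaner since the lemma is a purely order-theoretic statement; the paper's approach buys brevity by outsourcing all the work to the already-proved representation theorem. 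Your key technical devices --- writing $A\oplus B = X^*AX + Y^*BY$ for the coordinate isometries to handle direct sums, and factoring $\|X\|^2I_k - X^*X = Y^*Y$ so that $e\otimes(\|X\|^2I_k - X^*X) = Y^*(e\otimes I_k)Y\in Z_{ac}^k$ --- are exactly the right ones and are valid.

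One correction: your justification that $e\otimes I_n$ is accretive via ``$e\in Z_{sa}^1\subseteq Z_{ac}^1$'' is wrong, since $Z_{sa}^1 = iZ_{ac}^1\cap -iZ_{ac}^1$ is \emph{not} contained in $Z_{ac}^1$ in general (e.g.\ $-e$ is self-adjoint but not accretive). The fact you need is still true and easy: apply the matrix-order-unit axiom to $z=0$ to get some $t>0$ with $te\otimes I_n\in Z_{ac}^n$, and then use that $Z_{ac}^n$ is a cone to conclude $se\otimes I_n\in Z_{ac}^n$ for all $s\geq 0$. This fact is load-bearing in several places in your argument (the value $\nu_e^n(0)=0$, the upward closure of $\{t>0: te\otimes I_n - z\in Z_{ac}^n\}$ used in the final equivalence, and the positive-semidefinite remainder terms in the compression and direct-sum steps), so it deserves the correct one-line derivation rather than the false inclusion.
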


\begin{proof} By Theorem \ref{UnitalOpSp}, we may identify $Z$ with
a unital subspace of $B(H)$ for some Hilbert space $H$. Under this
identification, we have $$\nu_e^n(z) = \inf \{t > 0: Re(z) \leq t I_H \otimes I_n \}.$$ Fix $A \in M_n(Z)$, $B \in M_m(Z)$, and $X \in M_{n,k}$. Since $Re(A \oplus B) = Re(A) \oplus Re(B)$, we see that $Re(A \oplus B) \leq t I_H \otimes I_{n+m}$ if and only if $Re(A) \leq t I_H \otimes I_n$ and $Re(B) \leq t I_H \otimes I_m$. Also, if $Re(A) \leq t I_H \otimes I_n$ then $Re(X^*AX) \leq t\|X^2\| I_H \otimes I_k$. For, if $h \in H^k$ with $\|h\|^2=1$, then $$ \langle Re(X^*AX) h, h \rangle = \langle Re(A) Xh, Xh \rangle \leq \|Xh\|^2 t \leq \|X\|^2 t. $$ So $\{\nu_e^n\}$ is a matrix-compatible function. It is easily verified that each $\nu_e^n$ is a gauge and that $z \in M_n(Z)$ if and only if $\nu_e^n(-z)=0$. Since $Z_{ac}$ is $\complex$-proper, it follows that $\{\nu_e^n\}$ is $\complex$-proper, completing the proof.\end{proof}

Let $(Z,\nu)$ and $(W,\o)$ be two matrix gauge spaces. We call a map
$\phi:Z \rightarrow W$ \textbf{completely gauge contractive} if for
each $z \in M_n(Z)$ we have
$\o_n(\phi^{(n)}(z)) \leq \nu_n(z)$. We call $\phi$
\textbf{completely gauge isometric} if for each
$z \in M_n(Z)$ we have $\o_n(\phi^{(n)}(z)) = \nu_n(z)$.

\begin{lemma} \label{opSysRep} Let $(Z,\nu)$ be a
matrix gauge space and $(\tilde{Z},u)$ its unitization. Set
$\tilde{Z}_{ac}^n = \{(A,X) : u_n(-A,-X) = 0\}$. Then $(\tilde{Z},
\tilde{Z}_{ac}, e)$ is an accretive archimedean matrix-order unit
space for $e=(0,1)$, and \[ u_n(A,X) = \inf \{t>0 : te \otimes I_n -
(A,X) \in \tilde{Z}_{ac}^n \}
\] for each $A \in M_n(Z)$, $X \in M_n$.
Consequently, $(\tilde{Z},u)$ is a matrix gauge space. Moreover, the
mapping $z \mapsto (z,0)$ from $Z$ to $\tilde{Z}$ is completely
gauge isometric. \end{lemma}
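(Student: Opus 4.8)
The plan is to verify each clause of the statement in turn, leaning heavily on the three preceding lemmas about the unitization. First I would check that $\{\tilde Z_{ac}^n\}$ is a matrix cone: since $u=\{u_n\}$ is a matrix-compatible function by Lemma \ref{unitLemma}, Lemma \ref{coneFromGauge} applied to $u$ shows that $\{(A,X) : u_n(A,X)=0\}$ is a matrix cone; replacing $(A,X)$ by $(-A,-X)$ (a linear operation preserving the matrix-cone axioms, since $T^*(-A,-X)T = -(T^*AT,T^*XT)$) shows the same for $\tilde Z_{ac}^n$. For $\complex$-properness of this cone, I would unwind the definition: $(A,X) \in \cap_{k=0}^3 i^k \tilde Z_{ac}^1$ forces $u_1(i^k(-A,-X)) = 0$ for all $k$, i.e. $u_1(i^k A, i^k X) = 0$ for all $k$; since the infimum defining $u_1$ is over $t$ with $X_t \gg 0$ and $\nu_1(X_t^{-1/2} A X_t^{-1/2}) \le 1$, having $u_1 = 0$ for all four rotations should force (using $\complex$-properness of $\nu_1$ on the $A$-component and the fact that $\mathrm{Re}(X) \le tI$ for all $t > 0$ forces $\mathrm{Re}(X) \le 0$, applied to all rotations, forcing $X = 0$) that $A = 0$ and $X = 0$.

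Next I would establish the order-unit identity $u_n(A,X) = \inf\{t > 0 : te \otimes I_n - (A,X) \in \tilde Z_{ac}^n\}$ with $e = (0,1)$. Note $te \otimes I_n - (A,X) = (-A,\, tI_n - X)$, and this lies in $\tilde Z_{ac}^n$ iff $u_n(A,\, X - tI_n) = 0$ (applying the $(-A,-X)$ convention). The key computation is that $(X - tI_n)_s = sI_n - \mathrm{Re}(X) + tI_n = X_{s+t}$, so that replacing $X$ by $X - tI_n$ just shifts the parameter in the infimum defining $u_n$ by $t$; hence $u_n(A, X - tI_n) = 0$ precisely when for every $\e > 0$ we have $X_{t+\e} \gg 0$ and $\nu_n(X_{t+\e}^{-1/2} A X_{t+\e}^{-1/2}) \le 1$, which happens exactly when $t \ge u_n(A,X)$. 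Taking the infimum over such $t$ recovers $u_n(A,X)$. This identity, combined with the already-verified matrix-cone and $\complex$-proper properties, shows $e$ is an accretive matrix-order unit (every $(A,X)$ has finite $u_n$, so some $t$ works) and archimedean ($u_n$ being the infimum means $te\otimes I_n - (A,X) \in \tilde Z_{ac}^n$ for all $t > 0$ gives $u_n(A,X) = 0$, i.e. $-(-A,-X)\dots$ wait — I need $(A,X) \in \tilde Z_{ac}^n$, which is $u_n(-A,-X) = 0$; so I should run the same argument with $(A,X)$ replaced by $(-A,-X)$ throughout, or simply note the order-unit identity holds for all elements and apply it to $-( A,X)$ to conclude archimedeanness). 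I would be careful to phrase this symmetrically.

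Given the order-unit identity, $(\tilde Z, u)$ is a matrix gauge space by Lemma \ref{archimedean order unitgauge} (its $\nu_e$ is exactly $u$), which also re-confirms each $u_n$ is a gauge. Finally, for the embedding $z \mapsto (z,0)$: I must show $u_n(A,0) = \nu_n(A)$ for $A \in M_n(Z)$. Here $X = 0$ gives $X_t = tI_n$, so $X_t^{-1/2} A X_t^{-1/2} = t^{-1} A$, and $\nu_n(t^{-1} A) = t^{-1}\nu_n(A) \le 1$ iff $t \ge \nu_n(A)$; also $tI_n \gg 0$ for all $t > 0$. Hence the infimum is exactly $\nu_n(A)$, using positive homogeneity of the gauge $\nu_n$. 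The main obstacle I anticipate is the archimedean/$\complex$-proper bookkeeping — making sure the $(-A,-X)$ sign convention in the definition of $\tilde Z_{ac}^n$ is threaded consistently through the order-unit identity so that "archimedean" really follows, and handling the possibility $\nu_n(A) = 0$ (where the infimum is $0$ and $t^{-1} A$ has gauge $0$ for every $t$) as a separate easy case; the rest is the routine spectral-calculus manipulation of $X_t$ already rehearsed in Lemma \ref{unitLemma}.
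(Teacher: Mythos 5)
Your proposal is correct and follows essentially the same route as the paper's proof: the matrix cone comes from Lemma \ref{coneFromGauge} applied to $u$ (via Lemma \ref{unitLemma}), $\complex$-properness is checked at level one by first forcing the scalar component to vanish and then invoking $\complex$-properness of $\nu$, the order-unit identity comes from the shift $(X-tI_n)_s = X_{s+t}$, and the embedding from $u_n(A,0)=\nu_n(A)$. The only difference is that you spell out the archimedean/sign-convention bookkeeping that the paper dismisses as ``easy to check,'' which is a reasonable elaboration rather than a different argument.
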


\begin{proof} We first show that $u_n(A,0) = \nu_n(A)$ for each $A
\in M_n(Z)$. As $tI_n = 0_t \gg 0$ for every $t > 0$, we have
$u_n(A,0) = \inf \{ t > 0 : \nu_n(t^{-1}A) \leq 1 \} = \nu_n(A).$

Set $C_n = \{(A,X) \in M_n(\tilde{Z}): u_n(A,X)=0\}$. By Lemma
\ref{coneFromGauge}, $C=\{C_n\}$ is a matrix cone. To see
that $C$ is $\complex$-proper, it suffices to show that $u_1$ is
$\complex$-proper, by Lemma \ref{coneFromGauge} . To this end, let
$z \in Z, \lambda \in \complex$, and assume that $u_1(i^k z,i^k
\lambda)=0$ for each $k \in \{0,1,2,3\}$. Then for every $t > 0$,
$(i^k \lambda)_t = t - Re(i^k \lambda) > 0$. Hence, $Re(\lambda),
Im(\lambda) = 0$, so $\lambda = 0$. Since $u_1(z,0)=\nu_1(z)$ and
since $\nu$ is $\complex$-proper, we see that $z = 0$. So $u_1$ is
$\complex$-proper, and hence $C$ is a $\complex$-proper
matrix cone. Now set $\tilde{Z}_{ac}^n = -C_n$ for each $n
\in \naturals$. Then $\tilde{Z}_{ac}= \{ \tilde{Z}_{ac}^n\}$ is a
$\complex$-proper matrix cone. We now show that
\begin{equation} \label{unitizationGaugeEq} u_n(A,X) = \inf \{t>0 : t(0,I_n) - (A,X) \in
\tilde{Z}_{ac}^n \}. \end{equation} Now, $t(0,I_n) - (A,X) \in
\tilde{Z}_{ac}^n$ if and only if $u_n(A,X - tI_n) = 0$. This holds
if and only if there is a sequence $r_k \downarrow 0$ such that $(X
- tI_n)_{r_k} = (t + r_k)I_n - Re(X) \gg 0$, and \[ \nu_n((X -
tI_n)_{r_k}^{-1/2} A (X - tI_n)_{r_k}^{-1/2}) \leq 1 \] for all $k$.
Equivalently, by setting $s_k = r_k + t$, we see that $s_k
\downarrow t$, $X_{s_k} \gg 0$, and \[ \nu_n((X)_{s_k}^{-1/2} A
(X)_{s_k}^{-1/2}) \leq 1 \] for all $k$. Hence, $t(0,I_n) - (A,X)
\in \tilde{Z}_{ac}^n$ if and only if $u_n(A,X) \leq t$. The claim
follows. From Equation (\ref{unitizationGaugeEq}) above, it is easy
to check that $(0,1)$ is an accretive archimedean matrix-order unit
for $\tilde{Z}$, completing the proof. \end{proof}

Before stating our main result, we prove one final lemma.

\begin{lemma} \label{GaugeNormLemma} Let $H$ be a Hilbert space and let $Z$ be an operator space in $B(H)$. Then for each $z \in M_n(Z)$, $\inf \{t > 0: tI_H \otimes I_n
- z \in Z_{ac}^n \} = \|Re(z)_+\|_{B(H^n)}$ and $\|z\|_{B(H^n)} = \|Re(y)_+\|_{B(H^{2n})}$ for $$ y = \begin{bmatrix}
0 & 2z \\ 0 & 0
\end{bmatrix} \in B(H^{2n}).$$ \end{lemma}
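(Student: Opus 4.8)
The plan is to prove the two claimed identities separately, both by elementary Hilbert-space operator computations. Throughout, write $P = Re(z) \in B(H^n)_{sa}$ for the first identity, and recall that $Re(z)_+$ denotes the positive part of the self-adjoint operator $P$, so that $Re(z)_+ = \tfrac12(|P| + P)$ and $\sigma(Re(z)_+) = \{\max(\lambda,0) : \lambda \in \sigma(P)\}$ by the spectral theorem; in particular $\|Re(z)_+\| = \max(\sup \sigma(P), 0) = \max(\sup \sigma(Re(z)), 0)$.

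For the first identity, I would argue that $tI_H \otimes I_n - z \in Z_{ac}^n$ means precisely $Re(tI_H \otimes I_n - z) = tI_{H^n} - P \geq 0$ (using Example \ref{concreteAccSp}, where the accretive cone consists of operators with positive real part, and $Re(tI_H \otimes I_n - z) = tI_{H^n} - Re(z)$ since $t$ is real). The condition $tI_{H^n} - P \geq 0$ is equivalent to $t \geq \sup\sigma(P)$. Hence the infimum over $t > 0$ of those $t$ with $tI_{H^n} - P \geq 0$ equals $\max(\sup\sigma(P), 0) = \|Re(z)_+\|_{B(H^n)}$, where the edge case $\sup\sigma(P) \leq 0$ (giving infimum $0$, attained as a limit of positive $t$) must be handled by noting $\|Re(z)_+\| = 0$ there. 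This gives the first equality.

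For the second identity, let $y = \begin{bmatrix} 0 & 2z \\ 0 & 0 \end{bmatrix}$, so $Re(y) = \begin{bmatrix} 0 & z \\ z^* & 0 \end{bmatrix}$. The key computational fact is that $Re(y)^2 = \begin{bmatrix} zz^* & 0 \\ 0 & z^*z \end{bmatrix}$, so $|Re(y)| = \begin{bmatrix} |z^*| & 0 \\ 0 & |z| \end{bmatrix}$ and therefore $\|Re(y)\| = \| \, |z| \, \| = \|z\|_{B(H^n)}$. Since $Re(y)$ is self-adjoint, its spectrum is symmetric about $0$ (conjugate $Re(y)$ by $\operatorname{diag}(I, -I)$ to see $Re(y) \cong -Re(y)$), so $\sup\sigma(Re(y)) = \|Re(y)\|$. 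Combining with the formula $\|Re(y)_+\| = \max(\sup\sigma(Re(y)), 0) = \sup\sigma(Re(y))$ (nonnegative since $0 \in \sigma(Re(y))$ when $z \neq 0$, and trivially when $z = 0$), we obtain $\|Re(y)_+\|_{B(H^{2n})} = \|Re(y)\| = \|z\|_{B(H^n)}$, as desired.

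The only mild obstacle is bookkeeping: one must track that $M_n(Z) \subset B(H^n)$ and $M_{2n}(Z) \subset B(H^{2n})$ under the identifications, confirm $Re$ commutes with the passage to matrices and with addition of real scalar multiples of the identity, and correctly identify the $2n \times 2n$ block structure so that $Re\left(\begin{bmatrix} 0 & 2z \\ 0 & 0\end{bmatrix}\right) = \begin{bmatrix} 0 & z \\ z^* & 0 \end{bmatrix}$ (the factor $2$ in $y$ is exactly what makes this clean). None of these steps is deep; the essential content is the spectral identity $\|Re(y)_+\| = \|Re(y)\|$ for the off-diagonal self-adjoint operator $Re(y)$, which follows from its spectral symmetry.
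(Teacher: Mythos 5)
Your proof is correct. For the first identity you do essentially what the paper does: reduce $tI_H\otimes I_n - z \in Z_{ac}^n$ to $Re(z)\le tI$ and then use functional calculus / spectral mapping to identify $\inf\{t>0: Re(z)\le tI\}$ with $\max(\sup\sigma(Re(z)),0)=\|Re(z)_+\|$, with the same edge case when $\sup\sigma(Re(z))\le 0$. For the second identity your route genuinely differs from the paper's. The paper obtains $\|z\|_{B(H^n)}=\|Re(y)_+\|$ by applying the first identity to $y$ and then invoking Theorem \ref{UnitalOpSp}, which in turn rests on Lemma 3.1 of \cite{paulsen2002completely} (the standard fact that $\|z\|\le t$ iff $\left[\begin{smallmatrix} tI & z \\ z^* & tI\end{smallmatrix}\right]\ge 0$). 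You instead compute directly: $Re(y)=\left[\begin{smallmatrix} 0 & z \\ z^* & 0\end{smallmatrix}\right]$, $Re(y)^2 = zz^*\oplus z^*z$, hence $\|Re(y)\|=\|z\|$, and the unitary conjugation by $I\oplus(-I)$ shows $\sigma(Re(y))$ is symmetric about $0$, so $\|Re(y)_+\|=\sup\sigma(Re(y))=\|Re(y)\|$. This is a self-contained and in effect re-proves the cited Paulsen lemma; the paper's version is shorter but leans on the external reference and on Theorem \ref{UnitalOpSp}. Both arguments are sound; yours has the minor virtue of not depending on the representation theorem for the norm formula.
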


\begin{proof}
Let
$a = Re(z)$, and identify $a$ with the function $f_a: x \mapsto x$
on $\sigma(a) \subset \reals$ in the unital $C^*$-algebra generated
by $a$. Then $a_+$ is identified with the function $(f_a)_+$ which
equals $f_a$ when $x \geq 0$ and equals zero otherwise. Identifying $I_H \otimes I_n$ with the function $I(x)=1$, we see that $\|(f_a)_+\|_{\infty} \leq t$ if and only if $(f_a)_+ \leq tI$. The final observation is now immediate from Theorem \ref{UnitalOpSp}.
\end{proof}

We now arrive at our main result, which follows immediately from Theorem \ref{UnitalOpSp}, Lemma \ref{opSysRep}, and Lemma \ref{GaugeNormLemma}. We regard $B(H)$ as a matrix gauge
space by equipping it with the matrix gauge defined in Example
\ref{concrete}.

\begin{theorem} \label{mainThm} Let $(Z,\nu)$ be a matrix gauge space with a
$\complex$-proper gauge. Then there exists a Hilbert space $H$ and a
completely gauge isometric linear map $\phi:Z \rightarrow B(H)$.
Moreover, $\phi$ is completely isometric with respect to the norm
defined by $$ \|z\|_n = \nu_{2n}(\begin{bmatrix} 0 & 2z
\\ 0 & 0 \end{bmatrix}) $$ and a real-complete order embedding with respect to the accretive cones defined by setting $Z_{ac}^n = \{z : \nu_n(-z) = 0\}$. \end{theorem}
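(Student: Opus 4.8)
The plan is to derive Theorem \ref{mainThm} by reducing it to the unital case already treated in Sections 2 and 3. Given a matrix gauge space $(Z,\nu)$ with $\complex$-proper gauge, I would first pass to its unitization $(\tilde{Z},u)$ from Definition \ref{DefUnitize} and set $\tilde{Z}_{ac}^n = \{(A,X) : u_n(-A,-X)=0\}$ and $e=(0,1)$. By Lemma \ref{opSysRep}, $(\tilde{Z},\tilde{Z}_{ac},e)$ is an accretive archimedean matrix-order unit space, the gauge is recovered from the unit by $u_n(A,X) = \inf\{t>0 : te\otimes I_n - (A,X)\in \tilde{Z}_{ac}^n\}$, and the inclusion $\iota:Z\to\tilde{Z}$, $\iota(z)=(z,0)$, is completely gauge isometric. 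This is the only place the unitization machinery is used, and it is already packaged into Lemma \ref{opSysRep}.

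Next I would apply Theorem \ref{UnitalOpSp} to $(\tilde{Z},\tilde{Z}_{ac},e)$, obtaining a Hilbert space $H$ and a unital real-complete order embedding $\psi:\tilde{Z}\to B(H)$ with $\|w\|_n = \|\psi^{(n)}(w)\|_{B(H^n)}$ for the order-unit norm displayed there, and then set $\phi:=\psi\circ\iota:Z\to B(H)$. The crucial intermediate claim is that $\psi$ (hence $\phi$) is completely gauge isometric when $B(H)$ carries the gauge of Example \ref{concrete}. To see this, fix $w\in M_n(\tilde{Z})$; since $\psi$ is a \emph{unital} real-complete order embedding, $\psi^{(n)}(e\otimes I_n)=I_H\otimes I_n$ and $te\otimes I_n - w\in\tilde{Z}_{ac}^n$ holds precisely when $tI_H\otimes I_n-\psi^{(n)}(w)$ is accretive in $B(H^n)$. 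Taking the infimum over such $t$, using the recovery formula from the previous step, and invoking the first identity of Lemma \ref{GaugeNormLemma} gives
$$ u_n(w) \;=\; \inf\{t>0 : tI_H\otimes I_n - \psi^{(n)}(w)\ \text{is accretive}\} \;=\; \|Re(\psi^{(n)}(w))_+\|_{B(H^n)}, $$
which is exactly the Example \ref{concrete} gauge of $\psi^{(n)}(w)$. Composing with the completely gauge isometric $\iota$ shows $\phi$ is completely gauge isometric.

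It then remains to read off the norm and order statements. For the order structure: by Lemma \ref{coneFromGauge} (applied to $-\nu$, equivalently to the cone $\{z:\nu_n(z)=0\}$) together with $\complex$-properness of $\nu$, the sets $Z_{ac}^n=\{z:\nu_n(-z)=0\}$ form a $\complex$-proper matrix cone, so $(Z,\{Z_{ac}^n\})$ is an accretive matrix-ordered vector space; and complete gauge isometry of $\phi$ yields, for $z\in M_n(Z)$, that $z\in Z_{ac}^n$ iff $\nu_n(-z)=0$ iff $\|Re(-\phi^{(n)}(z))_+\|=0$ iff $\phi^{(n)}(z)$ is accretive in $B(H^n)$, which is precisely the statement that $\phi$ is a real-complete order embedding onto its image. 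For the norm: applying the second identity of Lemma \ref{GaugeNormLemma} to $y=\phi^{(2n)}\big(\begin{bmatrix}0 & 2z\\0&0\end{bmatrix}\big)=\begin{bmatrix}0 & 2\phi^{(n)}(z)\\0&0\end{bmatrix}$ and using complete gauge isometry once more,
$$ \|\phi^{(n)}(z)\|_{B(H^n)} \;=\; \|Re(y)_+\|_{B(H^{2n})} \;=\; \nu_{2n}\Big(\begin{bmatrix}0 & 2z\\0&0\end{bmatrix}\Big), $$
so $\phi$ is completely isometric for the displayed norm $\|\cdot\|_n$.

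No serious obstacle remains once Lemmas \ref{opSysRep} and \ref{GaugeNormLemma} are available — the argument is essentially an assembly of earlier results. The one step that needs genuine care is the verification in the second paragraph: translating the output of Theorem \ref{UnitalOpSp} (a unital real-complete order embedding that is isometric for a specific order-unit norm) into the gauge language of Example \ref{concrete}. That translation is exactly what Lemma \ref{GaugeNormLemma} is built to provide, so the main ``work'' is recognizing that the infimum defining $u_n$ transports, under a unital real-complete order embedding, to the infimum computing $\|Re(\cdot)_+\|$ in $B(H^n)$.
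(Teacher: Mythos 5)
Your proposal is correct and follows exactly the route the paper intends: unitize via Lemma \ref{opSysRep}, represent the unitization with Theorem \ref{UnitalOpSp}, and translate the order-unit infimum into the $\|Re(\cdot)_+\|$ gauge and the block-matrix norm via Lemma \ref{GaugeNormLemma}. The paper states the theorem ``follows immediately'' from those three results without writing out the details, and your write-up is a faithful expansion of that same argument.
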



\section{Representations of ordered operator spaces}

In this section, we consider the problem of representing an abstract
operator space (in the sense of Theorem \ref{Ruan}) equipped with a
compatible order structure as a subspace of $B(H)$ for some Hilbert
space $H$. We will demonstrate a simple compatibility condition that
is required for such representations to exist.

We begin with the non-self-adjoint case. Let $Z$ be a
$\complex$-vector space. We call a matrix gauge $h=\{h_n\}$ a
\textbf{hermitian matrix gauge} if $h$ is $\complex$-proper and if
for each $z \in M_n(Z)$ and $t \in \reals$, we have
$h_n(tz)=|t|h_n(z)$. In other words, a hermitian matrix gauge is a
$\complex$-proper gauge for which $h_n$ is a $\reals$-seminorm on
$M_n(Z)$ (i.e., a seminorm on the $\reals$-vector space $M_n(Z)$
obtained by forgetting the $\complex$-vector space structure).

Let $(Z,Z_{ac})$ be an accretive matrix-ordered vector space with a
hermitian matrix gauge $h$. We call $Z_{ac}$ \textbf{$h$-closed} if
whenever $\{z_k\}$ is a sequence in $Z_{ac}^n$ and $h_n(z_k - z)
\rightarrow 0$ for some $z \in M_n(Z)$, we have $z \in Z_{ac}^n$. We
call $(Z,Z_{ac},h)$ a \textbf{normal accretive operator space} if
$Z_{ac}$ is $h$-closed and if whenever $y-x,z-y \in Z_{ac}^n$, we
have $h_n(y) \leq \max\{h_n(x),h_n(z)\}.$

\begin{example} \label{normAccRep} Let $Z \subset B(H)$ be an operator space and let
$Z_{ac}$ be the matrix cone of accretive operators. For each
and $z \in M_n(Z)$, define $h_n(z)=\|Re(z)\|$.
Then $(Z,Z_{ac}^n,h)$ is  normal accretive operator space. For,
$Z_{ac}$ is easily seen to be $h$-closed, and if $y-x,z-y \in
Z_{ac}^n$, then $Re(x) \leq Re(y) \leq Re(z)$, and hence, $\|Re(y)\|
\leq \max\{ \|Re(x)\|, \|Re(z)\| \}$.
\end{example}

We will show below that every normal accretive operator space has a
representation as in Example \ref{normAccRep}. Our strategy will be
to show that normal accretive operator spaces can be replaced with
matrix gauge spaces. We will then apply Theorem \ref{mainThm}. To
this end, we first show how to define the matrix gauge.

\begin{definition} Let $(Z,Z_{ac},h)$ be a normal accretive
operator space. For each $z \in M_n(Z)$, define $\nu_{max}^n(z) =
\inf \{ h_n(z+p) : p \in Z_{ac}^n \}.$ We call
$\nu_{max}=\{\nu_{max}^n\}$ the \textbf{maximal matrix gauge for
$Z$}.
\end{definition}

We say that a matrix gauge $\nu$ \textbf{induces} $(Z,Z_{ac},h)$, a
(necessarily) normal accretive operator space, if for each $z \in
M_n(Z)$,
$$Z_{ac}^n = \{z : \nu_n(-z)=0\}, h_n(z) = \max\{ \nu_n(z),
\nu_n(-z)\}.$$ We will see in Proposition \ref{maxGauge} that the
maximal matrix gauge for a normal accretive operator space $Z$ is
its largest possible inducing matrix gauge.

\begin{proposition} \label{accGauge} Let $(Z,Z_{ac},h)$ be a normal accretive
operator space. Then $\nu_{max}$ is a $\complex$-proper inducing matrix gauge on $(Z,Z_{ac},h)$. \end{proposition}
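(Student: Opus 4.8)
The plan is to verify, in order, the following: (1) each $\nu_{max}^n$ is finite and takes values in $[0,\infty)$; (2) $\nu=\{\nu_{max}^n\}$ is a matrix gauge (i.e. each $\nu_{max}^n$ is a gauge and the family is matrix-compatible); (3) $\nu$ is $\complex$-proper; and (4) $\nu$ induces $(Z,Z_{ac},h)$. Finiteness is immediate, since $0 \in Z_{ac}^n$ gives $\nu_{max}^n(z) \le h_n(z) < \infty$.

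For step (2), subadditivity of $\nu_{max}^n$ follows by taking near-optimal $p,q \in Z_{ac}^n$ for $z,w$ and using subadditivity of $h_n$ together with $p+q \in Z_{ac}^n$; positive homogeneity follows from $t Z_{ac}^n \subseteq Z_{ac}^n$ and homogeneity of $h_n$, handling $t=0$ separately. For matrix compatibility, the direct-sum condition $\nu_{max}^{n+m}(A\oplus B) = \max\{\nu_{max}^n(A),\nu_{max}^m(B)\}$ uses that $Z_{ac}^{n+m}$ contains $p\oplus q$ for $p\in Z_{ac}^n, q\in Z_{ac}^m$ (one inequality) and, for the reverse inequality, that compressing a positive $p'$ by the block projections keeps it accretive, so any witness $p'$ for $A\oplus B$ can be replaced by $p'_{11}\oplus p'_{22}\in Z_{ac}^{n+m}$; here I invoke the $L^\infty$-type property $h_{n+m}(A\oplus B)=\max\{h_n(A),h_m(B)\}$, which I should note follows from the normality axiom (since $A\oplus 0 \le A \oplus B$ and... — more carefully, from $h$ being a hermitian matrix gauge, condition (2) in the definition of matrix-compatible function applies to $h$ itself). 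For the compression inequality $\nu_{max}^k(X^*AX) \le \|X\|^2 \nu_{max}^n(A)$ with $X \in M_{n,k}$ scalar: given a witness $p \in Z_{ac}^n$ for $A$, use $X^*pX \in Z_{ac}^k$ (matrix cone property) as a witness for $X^*AX$, and bound $h_k(X^*(A+p)X) \le \|X\|^2 h_n(A+p)$ using that $h$ is matrix-compatible.

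For step (3), $\complex$-properness of $\nu_{max}^1$: suppose $\nu_{max}^1(i^k z) = 0$ for $k=0,1,2,3$. Then there exist sequences in $Z_{ac}^1$ approximating $-i^k z$ in $h_1$, so by $h$-closedness $-i^k z \in \overline{Z_{ac}^1} = Z_{ac}^1$ for each $k$; but also $\nu_{max}^1(i^k z)=0 \le h_1(i^k z)$ doesn't immediately give $i^k z \in Z_{ac}^1$ — instead I use $\nu_{max}^1(i^k z)=0$ to get $-i^kz \in Z_{ac}^1$ applied with $k$ replaced by $k+2$, yielding $i^k z \in Z_{ac}^1$ as well; hence $\pm i^k z \in Z_{ac}^1$ for all $k$, and $\complex$-properness of $Z_{ac}$ forces $z=0$. (I should double-check the closedness step: $\nu_{max}^1(w)=0$ means $\inf_p h_1(w+p)=0$, so there are $p_j \in Z_{ac}^1$ with $h_1(w+p_j)\to 0$, i.e. $-w$ is an $h_1$-limit of $p_j \in Z_{ac}^1$, giving $-w \in Z_{ac}^1$.)

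For step (4), that $Z_{ac}^n = \{z : \nu_{max}^n(-z)=0\}$: the inclusion $\subseteq$ is clear ($z\in Z_{ac}^n$ gives $p=z$ a witness with $h_n(-z+z)=0$), and $\supseteq$ is the closedness argument just used. For $h_n(z) = \max\{\nu_{max}^n(z),\nu_{max}^n(-z)\}$: the inequality $\ge$ is immediate from $\nu_{max}^n \le h_n$ and $h_n(-z)=h_n(z)$. The reverse inequality $h_n(z) \le \max\{\nu_{max}^n(z),\nu_{max}^n(-z)\}$ is \textbf{the main obstacle} and is exactly where normality enters: given near-optimal $p$ with $h_n(z+p)$ close to $\nu_{max}^n(z)$ and $q$ with $h_n(-z+q)$ close to $\nu_{max}^n(-z)$, we have $-p \le z \le ... $ wait — rather, $-(z+p) \in ?$; the right setup is that $z+p$ and $-(z-(-q)) = -z+q$ are controlled, and one wants to sandwich $z$ between elements of controlled $h_n$-norm. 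Concretely: from $p \in Z_{ac}^n$, $-(z+p) \le -z$ in the accretive order, i.e. $(-z) - (-(z+p)) = p \in Z_{ac}^n$; and similarly $z - (-(-z+q)) = ...$. I would set $x = -(z+p)$ (so $x \le -z$, meaning $-z - x = p \in Z_{ac}^n$) and observe we also need an upper bound for $-z$; using $q$ we get $z + (-q) = z - q$ with $-z + q \in Z_{ac}^n$ i.e. $z \le q$ hmm. The clean statement: $-z - x \in Z_{ac}^n$ and $y - (-z) \in Z_{ac}^n$ where $x = -(z+p)$, $y = q$... then $x \le -z \le y$, normality gives $h_n(-z) \le \max\{h_n(x), h_n(y)\} = \max\{h_n(z+p), h_n(q-0)\}$ — but $h_n(q)$ is not controlled. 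The fix is to instead compare $-z$ against $-(z+p)$ and against $(- z + q) + (-q) $... I expect the correct choice is $x = -(z+p) \le -z \le z+? $; I will need to write $-z$ between $-(z+p)$ and $(-z+q)$ is wrong dimensionally. I would resolve this by noting $(-z+q) - (-z) = q \in Z_{ac}^n$ gives $-z \le -z+q$, and combined with $-(z+p) \le -z$, normality yields $h_n(-z) \le \max\{h_n(z+p), h_n(-z+q)\}$, which on taking infima over $p,q$ gives exactly $h_n(z) \le \max\{\nu_{max}^n(z), \nu_{max}^n(-z)\}$. This is the heart of the argument; the rest is bookkeeping.
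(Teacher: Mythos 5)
Your proof is correct and, at its core, is the same as the paper's: the sandwich $-(z+p)\le -z\le -z+q$ (equivalently $z-q\le z\le z+p$) plus normality to get $h_n(z)\le\max\{\nu_{max}^n(z),\nu_{max}^n(-z)\}$, $h$-closedness of $Z_{ac}^n$ to get the inducing identity $Z_{ac}^n=\{z:\nu_{max}^n(-z)=0\}$, and $\complex$-properness of $Z_{ac}$ to get $\complex$-properness of $\nu_{max}$. The only difference is that you also verify the matrix-gauge axioms (subadditivity, homogeneity, compression, direct sums) explicitly, which the paper treats as routine and omits; your verifications of these are sound.
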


\begin{proof} Since $Z_{ac}$ is $h$-closed, it is clear from the
definition of $\nu_{max}$ that $\nu_{max}$ induces $Z_{ac}$. Also, since $0 \in Z_{ac}^n$, it is clear
that $\max\{ \nu_{max}^n(z), \nu_{max}^n(-z)\} \leq h_n(z).$ Now,
if $p,q \in Z_{ac}^n$ and $z \in M_n(Z)$, then $z - (z-q), (z+p) - z
\in Z_{ac}^n$. Hence, $h_n(z) \leq \max \{h_n(q - z), h_n(z + p)
\}$. By taking an infimum over all $p,q \in Z_{ac}^n$, we see that
$$h_n(z) \leq \max\{ \nu_{max}^n(z), \nu_{max}^n(-z)\}.$$ Finally,
if $\nu_{max}^n(i^k z) = 0$ for all $k \in \{0,1,2,3\}$, then $i^k z
\in Z_{ac}^n$ for each $k$. Since $Z_{ac}$ is $\complex$-proper,
$z=0$ in this case. Hence, $\nu_{max}$ is $\complex$-proper.
\end{proof}

\begin{theorem} \label{accRep} Let $(Z,Z_{ac},h)$ be a normal accretive operator
space. Then there exists a Hilbert space $H$ and a real-complete order embedding $\phi:Z
\rightarrow B(H)$ such that for each $z \in
M_n(Z)$, $h_n(z)=\|Re(\phi^{(n)}(z))\|.$ \end{theorem}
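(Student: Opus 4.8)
The plan is to reduce Theorem \ref{accRep} to the main result of the paper, Theorem \ref{mainThm}, by showing that the maximal matrix gauge $\nu_{max}$ associated to $(Z,Z_{ac},h)$ is a $\complex$-proper matrix gauge that recovers both the accretive cone and the hermitian gauge. The bulk of this work is already done: Proposition \ref{accGauge} establishes that $\nu_{max}$ is a $\complex$-proper inducing matrix gauge, meaning $Z_{ac}^n = \{z : \nu_{max}^n(-z) = 0\}$ and $h_n(z) = \max\{\nu_{max}^n(z), \nu_{max}^n(-z)\}$ for every $z \in M_n(Z)$. Before invoking Theorem \ref{mainThm}, I would verify the remaining point left implicit in Proposition \ref{accGauge}, namely that $\nu_{max} = \{\nu_{max}^n\}$ is genuinely a \emph{matrix gauge} in the sense of Section 4: that each $\nu_{max}^n$ is a gauge (subadditive and positively homogeneous on $M_n(Z)$) and that the family is matrix-compatible, i.e.\ $\nu_{max}^k(X^*AX) \le \|X\|^2 \nu_{max}^n(A)$ and $\nu_{max}^{n+m}(A \oplus B) = \max\{\nu_{max}^n(A), \nu_{max}^m(B)\}$. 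Positive homogeneity and subadditivity follow from the corresponding properties of $h_n$ together with the fact that $Z_{ac}^n$ is a cone closed under addition; the compression inequality follows because $X^* Z_{ac}^n X \subseteq Z_{ac}^k$ (so the infimum over $p \in Z_{ac}^k$ majorizing $X^*(z+q)X$ is no worse than $\|X\|^2$ times the infimum for $z$), and the direct-sum identity follows because $Z_{ac}^{n+m}$ decomposes compatibly with $\oplus$ and $h$ satisfies condition (2) of a matrix-norm on such blocks.

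Once $\nu_{max}$ is confirmed to be a $\complex$-proper matrix gauge, I would apply Theorem \ref{mainThm} to the matrix gauge space $(Z, \nu_{max})$. This yields a Hilbert space $H$ and a linear map $\phi: Z \to B(H)$ that is completely gauge isometric, hence (by the ``moreover'' clause of Theorem \ref{mainThm}) a real-complete order embedding with respect to the cones $\{z : (\nu_{max})_n(-z) = 0\}$. By the inducing property from Proposition \ref{accGauge}, these cones are exactly $Z_{ac}^n$, so $\phi$ is a real-complete order embedding of $(Z, Z_{ac})$ into $B(H)$ equipped with its cone of accretive operators. It remains only to identify $h_n(z)$ with $\|Re(\phi^{(n)}(z))\|$. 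Since $\phi$ is completely gauge isometric, $\nu_{max}^n(z) = \nu_n(\phi^{(n)}(z)) = \|(Re(\phi^{(n)}(z)))_+\|$ by Example \ref{concrete} and Lemma \ref{GaugeNormLemma}; applying the same identity to $-z$ gives $\nu_{max}^n(-z) = \|(Re(-\phi^{(n)}(z)))_+\| = \|(-Re(\phi^{(n)}(z)))_+\| = \|(Re(\phi^{(n)}(z)))_-\|$. Therefore
$$h_n(z) = \max\{\nu_{max}^n(z), \nu_{max}^n(-z)\} = \max\{\|(Re(\phi^{(n)}(z)))_+\|, \|(Re(\phi^{(n)}(z)))_-\|\} = \|Re(\phi^{(n)}(z))\|,$$
using that for a self-adjoint operator $a$ one has $\|a\| = \max\{\|a_+\|, \|a_-\|\}$.

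The main obstacle I anticipate is the bookkeeping in the first step: checking that $\nu_{max}$ satisfies the matrix-compatibility axioms cleanly, especially the direct-sum identity $\nu_{max}^{n+m}(A \oplus B) = \max\{\nu_{max}^n(A), \nu_{max}^m(B)\}$. The $\ge$ direction is immediate by compressing with the block projections, but the $\le$ direction requires that an approximately-optimal $p \in Z_{ac}^n$ for $A$ and $q \in Z_{ac}^m$ for $B$ combine to give $p \oplus q \in Z_{ac}^{n+m}$ with $h_{n+m}((A\oplus B) + (p \oplus q)) = \max\{h_n(A+p), h_m(B+q)\}$, which uses both that $Z_{ac}$ is a matrix cone (so $p \oplus q = \mathrm{diag}$-type compression lands in the cone) and that $h$ inherits the max-over-blocks behavior — the latter is not one of the stated axioms for a hermitian matrix gauge and may need to be derived from $h_n(z) = \max\{\nu_{max}^n(z), \nu_{max}^n(-z)\}$ together with the matrix-compatibility of $\nu_{max}$, creating a mild circularity that must be untangled by arguing the direct-sum identity for $\nu_{max}$ directly from the definition $\nu_{max}^n(z) = \inf\{h_n(z+p) : p \in Z_{ac}^n\}$ and the analogous block structure already present in $h$ via the original normal accretive operator space axioms. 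Apart from this, the argument is a routine assembly of previously established lemmas.
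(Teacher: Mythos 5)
Your proposal is correct and follows essentially the same route as the paper, whose entire proof is to combine Proposition \ref{accGauge} with Theorem \ref{mainThm} and the identity $\|Re(T)\| = \max\{\|Re(T)_+\|,\|Re(T)_-\|\}$. The extra care you take in verifying that $\nu_{max}$ actually satisfies the matrix-gauge axioms (subadditivity, compression, and the direct-sum identity) addresses a point the paper leaves implicit in Proposition \ref{accGauge}, and your resolution of the apparent circularity --- arguing the direct-sum identity directly from the definition of $\nu_{max}$ and the axioms of a normal accretive operator space --- is the right way to fill it.
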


\begin{proof} Combine Proposition \ref{accGauge} and Theorem
\ref{mainThm} along with the observation that $\|Re(T)\| = \max\{\|Re(T)_+\|, \|Re(T)_-\|\}$ in any $C^*$-algebra. \end{proof}

We now consider the self-adjoint case. Let $V$ be a $*$-vector
space. We call $(V,\{ \|\cdot\|_n\})$ a \textbf{$*$-operator space}
if $(V,\{ \|\cdot\|_n\})$ is an $L^\infty$ matrix-normed space and
if $\|x^*\|_n = \|x\|_n$ for each $x \in M_n(V)$. If $(V,V_+)$ is a
matrix-ordered $*$-vector space, then $(V,V_+,\{\|\cdot\|_n\})$ is
called a \textbf{matrix-ordered $*$-operator space} if each cone
$V_+^n$ is closed with respect to $\|\cdot\|_n$. We call
$(V,V_+,\{\|\cdot\|_n\})$ \textbf{normal} if whenever $x,y,z \in
V_{sa}^n$ satisfy $x \leq y \leq z$, we have $\|y\|_n \leq \max\{
\|x\|_n, \|z\|_n\}$.

\begin{example} Let $V \subset B(H)$ be a self-adjoint subspace, and
let $V_+$ be the matrix-ordering of positive operators. Then $(V,
V_+, \{\|\cdot\|_n\})$ is a normal matrix-ordered $*$-operator
space. \end{example}

Before moving on, we briefly recall some history. Matrix-ordered
$*$-operator spaces were considered by Werner in \cite{Werner02}. It
was shown that to each matrix-ordered $*$-operator space, there
exists a complete order embedding into $B(H)$. In general, this
embedding need not be isometric. In fact, it was shown in
\cite{Blecher} that the operator space dual $A'$ of a $C^*$-algebra
$A$ of dimension at least 2 possesses the structure of a
matrix-ordered $*$-operator space - however, no order embedding of
$A'$ into $B(H)$ is isometric. This failure is due to the fact that
$A'$ is not normal (to see this, consider the Jordan decomposition
of a functional on a $C^*$-algebra). We now show that completely
isometric representations are possible when the space is normal.

\begin{theorem} \label{normalRep} Let $(V,V_+,\{\|\cdot\|_n\})$ be a normal
matrix-ordered $*$-operator space. Then there exists a Hilbert space
$H$ and a self-adjoint complete order embedding $\phi: V \rightarrow
B(H)$ which is completely isometric. \end{theorem}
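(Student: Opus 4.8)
The plan is to recognize $(V,V_+,\{\|\cdot\|_n\})$ as a normal accretive operator space, invoke Theorem~\ref{accRep}, and then verify that the representation it produces is self-adjoint, a complete order embedding, and completely isometric for the original norm. Concretely I would equip $V$ with the matrix cone $Z_{ac}^n := \{z \in M_n(V) : Re(z) \in V_+^n\}$ and the matrix gauge $h_n(z) := \|Re(z)\|_n$, where $Re$ and $*$ are taken in the $*$-vector space $M_n(V)$.

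The first task is to check that $(V,Z_{ac},h)$ satisfies the axioms of a normal accretive operator space. That $Z_{ac}$ is a matrix cone uses $Re(X^*zX)=X^*Re(z)X$ for scalar $X$ together with $V_+$ being a matrix cone; it is $\complex$-proper because $\pm Re(z),\pm Re(iz)\in V_+^1$ forces $Re(z)=Im(z)=0$. By Lemma~\ref{adjoint} the canonical involution of this accretive structure coincides with the given $*$ (both are conjugate-linear involutions fixing $Z_{sa}^1=V_{sa}$, and $\span Z_{sa}^1=V$), and its derived positive cone $Z_{sa}^n\cap Z_{ac}^n$ is exactly $V_+^n$. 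That $h$ is a hermitian ($\complex$-proper, $\reals$-homogeneous) matrix gauge is immediate, with matrix-compatibility of $\{h_n\}$ coming from the $L^\infty$ matrix-norm axioms via $\|Re(X^*AX)\|_k=\|X^*Re(A)X\|_k\le\|X\|^2\|Re(A)\|_n$ and $\|Re(A\oplus B)\|=\max\{\|Re(A)\|,\|Re(B)\|\}$. The cone $Z_{ac}$ is $h$-closed because each $V_+^n$ is $\|\cdot\|_n$-closed, and the normality condition for $(V,Z_{ac},h)$ — if $y-x,z-y\in Z_{ac}^n$ then $h_n(y)\le\max\{h_n(x),h_n(z)\}$ — is exactly normality of $(V,V_+,\{\|\cdot\|_n\})$ applied to $Re(x)\le Re(y)\le Re(z)$ in $M_n(V)_{sa}$.

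Theorem~\ref{accRep} then supplies a Hilbert space $H$ and a real-complete order embedding $\phi:V\to B(H)$ with $h_n(z)=\|Re(\phi^{(n)}(z))\|$ for all $z\in M_n(V)$. I would upgrade $\phi$ as follows. For self-adjointness: if $z\in V_{sa}$ then $Re(\pm iz)=\mp Im(z)=0\in V_+^1$, so $\pm iz\in Z_{ac}^1$, hence $\pm i\phi(z)$ are accretive, forcing $Im(\phi(z))=0$; complex-linearity then gives $\phi(w^*)=\phi(w)^*$ for all $w$, and $\phi$ is injective. For the complete order embedding property: when $z=z^*\in M_n(V)$, $z\in V_+^n \Leftrightarrow z\in Z_{ac}^n \Leftrightarrow \phi^{(n)}(z)$ accretive $\Leftrightarrow \phi^{(n)}(z)\ge 0$ (the last equivalence since $\phi^{(n)}(z)$ is self-adjoint), and $\phi^{(n)}(z)\ge 0$ conversely forces $z=z^*$ by injectivity and self-adjointness. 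For complete isometry: applying the identity $h_{2n}(\cdot)=\|Re(\phi^{(2n)}(\cdot))\|$ to the dilation $w=\begin{bmatrix}0&2z\\0&0\end{bmatrix}\in M_{2n}(V)$, the left side is $\|Re(w)\|_{2n}$ with $Re(w)=\begin{bmatrix}0&z\\z^*&0\end{bmatrix}$, which equals $\|z\|_n$ — the compression onto the $(1,2)$-block gives one inequality, and factoring $Re(w)=(z\oplus z^*)\begin{bmatrix}0&I_n\\I_n&0\end{bmatrix}$ with $\|z\oplus z^*\|_{2n}=\max\{\|z\|_n,\|z^*\|_n\}=\|z\|_n$ gives the other — while the right side is $\|\phi^{(n)}(z)\|_{B(H^n)}$ by Lemma~\ref{GaugeNormLemma}, since $Re(\phi^{(2n)}(w))=\begin{bmatrix}0&\phi^{(n)}(z)\\\phi^{(n)}(z)^*&0\end{bmatrix}$ has spectrum symmetric about $0$. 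Thus $\|z\|_n=\|\phi^{(n)}(z)\|_{B(H^n)}$.

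The axiom verification is routine; the only real idea is the choice of accretive data, pairing the given order with the gauge $h_n=\|Re(\cdot)\|_n$ and replacing $V_+$ by its ``accretive completion'' $\{z:Re(z)\in V_+^n\}$, which is precisely the input Theorem~\ref{accRep} consumes. The step I expect to need the most care is the complete-isometry computation, where one must notice that in an arbitrary $*$-operator space the full matrix norm is recovered from $h$ through the $2\times 2$ dilation, $\|z\|_n=\|Re\begin{bmatrix}0&2z\\0&0\end{bmatrix}\|_{2n}$, so that the $h$-data returned by Theorem~\ref{accRep} already determines $\|\cdot\|_n$.
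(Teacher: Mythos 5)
Your proposal is correct and follows essentially the same route as the paper: equip $V$ with the accretive cone $V_{ac}^n=\{x: x+x^*\in V_+^n\}$ and the gauge $h_n(x)=\|Re(x)\|_n$, check that this gives a normal accretive operator space, apply Theorem~\ref{accRep}, and recover the norm via $\|x\|_n=h_{2n}\bigl(\begin{smallmatrix}0&2x\\0&0\end{smallmatrix}\bigr)$. The only difference is that you carry out in full the axiom verifications and the self-adjointness/order-embedding upgrades that the paper explicitly leaves to the reader, and those verifications are all sound.
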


\begin{proof} Define $V_{ac}$ by setting $V_{ac}^n = \{x \in
M_n(V) : x + x^* \in V_+^n \}$, and for each $x \in M_n(V)$, define
$h_n(x) = \|Re(x)\|_n$, where $Re(x)= \frac{1}{2}(x + x^*)$. We
leave it to the reader to verify that $(V, V_{ac}, h )$ is a normal
accretive operator space. By Theorem \ref{accRep}, there exists a
Hilbert space $H$ and a real-complete order embedding $\phi: V \rightarrow B(H)$ with
$h_n(x) =
\|Re(\phi^{(n)}(x))\|_n$. Since $$ \|x\|_n = \|\begin{bmatrix} 0 & x \\
x^* & 0 \end{bmatrix} \|_{2n} = h_{2n}( \begin{bmatrix} 0 & 2x \\ 0
& 0
\end{bmatrix} )$$ we see that $\phi$ is completely isometric. Since
every real-complete order embedding is automatically a self-adjoint complete order
embedding, we are done.
\end{proof}

Proposition \ref{accGauge} shows that a
normal accretive operator space $(Z,Z_{ac},h)$ is induced by its
maximal gauge $\nu_{max}$. The following examples show that
$\nu_{max}$ is not, in general, the only gauge which induces
$(Z,Z_{ac},h)$. Recall the gauge $\nu_e$ defined in Lemma
\ref{archimedean order unitgauge}. It follows easily from Lemma \ref{archimedean order unitgauge} that
$\nu_e$ induces any unital accretive operator space (as well as its
non-unital subspaces).

\begin{example} Let $V = \span \{x:=(-2,0,1)\} \subset \complex^3$, where $\complex^3$ is
regarded as the diagonal of $M_3 = B(\complex^3)$. Then
$\nu_{max}^1(x)=2$ whereas $\nu_e^1(x) = 1$. For, $$ \nu_{max}^1(x) = \inf \{ \|(-2,0,1) + p\| : p \in V_+^1 \} = 2$$ since $V_+^1 = \{0\}$, while $\nu_e^1(x) = \inf \{ t > 0 : (-2,0,1) \leq t(1,1,1) \} = 1.$ \end{example}

In the above example, the fact that $V_+^1 = \{0\}$ forced the
maximal gauge to be different from the order unit gauge. The next
example illustrates that even when $V_+^1$ spans $V$ the maximal
gauge may not be unique.

\begin{example} Fix $n > 2$. Let $V = \span \{ x:=(2,n,0), y:=(0,n,1) \}
\subset \complex^3$. Then $\nu_e^1(y-x) = 1$, whereas
$\nu_{max}^1(y-x) = \frac{2n}{n+2}$. To see this, notice that $p \in V_+$ if and only if $p = rx + sy$ for some $r,s \geq 0$. Set $m(r,s) = \max(|2r - 2|,|n(r+s)|,|1+s|)$. Then $\nu_{max}^1(y-x) = \inf \{ m(r,s): r,s \geq 0\}$. Clearly $m(r,s) \leq m(r,s')$ whenever $s \leq s'$, so we may assume $s=0$. Hence, we seek $r \geq 0$ minimizing $m(r,0) = \max(|2r - 2|, nr, 1)$. This is achieved at $r = \frac{2}{n+2}$. Since $n > 2$, we see that $\nu_{max}^1(y-x) = m(\frac{2}{n+2},0) = \frac{2n}{n+2}$. \end{example}

We will show in Theorem
\ref{BigcpccExt} that uniqueness of the inducing gauge is equivalent
to a fundamental extension property for operator spaces. While inducing gauges may not be unique, every
inducing gauge is bounded above by $\nu_{max}$.

\begin{proposition} \label{maxGauge} Let $(Z,Z_{ac},h)$ be a normal matrix-ordered
operator space, and suppose that $\nu$ is an inducing gauge for $Z$.
Then for each $z \in M_n(Z)$, we have $\nu_n(z) \leq
\nu_{max}^n(z)$. \end{proposition}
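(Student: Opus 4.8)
The plan is to exploit the variational definition $\nu_{max}^n(z) = \inf\{h_n(z+p) : p \in Z_{ac}^n\}$ together with the two defining properties of an inducing gauge $\nu$: namely $Z_{ac}^n = \{w : \nu_n(-w) = 0\}$ and $h_n(w) = \max\{\nu_n(w), \nu_n(-w)\}$ for all $w \in M_n(Z)$. Fix $z \in M_n(Z)$ and let $p \in Z_{ac}^n$ be arbitrary. Since $p \in Z_{ac}^n$, the first property gives $\nu_n(-p) = 0$. By subadditivity of the gauge $\nu_n$,
\[
\nu_n(z) = \nu_n\big((z+p) + (-p)\big) \leq \nu_n(z+p) + \nu_n(-p) = \nu_n(z+p).
\]
On the other hand, the second property gives $\nu_n(z+p) \leq \max\{\nu_n(z+p), \nu_n(-(z+p))\} = h_n(z+p)$. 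Chaining these, $\nu_n(z) \leq h_n(z+p)$ for every $p \in Z_{ac}^n$. Taking the infimum over all such $p$ yields $\nu_n(z) \leq \nu_{max}^n(z)$, as desired.

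The argument is short and essentially a two-line estimate once the right inequalities are combined, so there is no serious obstacle; the only thing to be careful about is making sure each invocation of the inducing-gauge axioms is to the correct component ($\nu_n$ at matrix level $n$) and that subadditivity is applied legitimately (which it is, since $\nu_n$ is a gauge on $M_n(Z)$). One could phrase the whole proof in a single displayed chain of inequalities, but splitting out the use of $\nu_n(-p) = 0$ makes the logic transparent.
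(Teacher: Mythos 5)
Your proof is correct and is essentially identical to the paper's: both use subadditivity to write $\nu_n(z) \leq \nu_n(z+p) + \nu_n(-p) = \nu_n(z+p) \leq h_n(z+p)$ and then take the infimum over $p \in Z_{ac}^n$. The only difference is that you spell out explicitly which inducing-gauge axiom justifies each step, which the paper leaves implicit.
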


\begin{proof} Given $z \in M_n(Z)$ and $p \in Z_{ac}^n$, we have
that $$\nu_n(z) \leq \nu_n(z+p) + \nu_n(-p) = \nu_n(z+p) \leq
h_n(z+p).$$ The result follows by taking an infimum. \end{proof}

We conclude this section by demonstrating one situation in which we have a unique inducing gauge.

\begin{proposition} Let $(Z,Z_{ac},e)$ be an accretive archimedean matrix-order unit
space. For each $z \in M_n(Z)$, let $$h_n(z) = \inf \{t > 0 : z - t
e \otimes I_n, t e \otimes I_n - z \in Z_{ac}^n \}.$$ Suppose that
$\nu$ induces $(Z,Z_{ac},h)$. Then $\nu=\nu_e$. \end{proposition}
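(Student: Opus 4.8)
The plan is to establish the two inequalities $\nu_n(z)\le \nu_e^n(z)$ and $\nu_e^n(z)\le \nu_n(z)$ for every $z\in M_n(Z)$, using only the two consequences of ``$\nu$ induces $(Z,Z_{ac},h)$'' --- namely (i) $Z_{ac}^n=\{w:\nu_n(-w)=0\}$ and (ii) $h_n(w)=\max\{\nu_n(w),\nu_n(-w)\}$ --- together with the facts that $e\otimes I_n\in Z_{ac}^n$ (take $z=0$ in the definition of accretive matrix-order unit), that $\nu_e$ is a matrix gauge by Lemma \ref{archimedean order unitgauge}, and the archimedean hypothesis on $e$. The governing idea is that a sufficiently large positive shift $z\mapsto z+t\,e\otimes I_n$ drives either that element or its negative into the accretive cone, where (i) forces a coordinate of $\nu$ to vanish, after which subadditivity of $\nu_n$ does the rest. (Conceptually, one is just re-deriving that both $\nu$ and $\nu_e$ must equal $\|Re(\cdot)_+\|$ in the representation of Theorem \ref{UnitalOpSp}.)

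First I would record two preliminaries. From $e\otimes I_n\in Z_{ac}^n$ and (i), $\nu_n(-e\otimes I_n)=0$; and since $(t-1)e\otimes I_n\in Z_{ac}^n$ for all $t\ge 1$, unwinding the definition of $h_n$ gives $h_n(e\otimes I_n)\le 1$, hence $\nu_n(e\otimes I_n)\le 1$ by (ii). Next, directly from the definition of $\nu_e$ (with the archimedean property supplying the boundary case), for $r\ge 0$ one has $r\,e\otimes I_n-z\in Z_{ac}^n\iff r\ge\nu_e^n(z)$ and $r\,e\otimes I_n+z\in Z_{ac}^n\iff r\ge\nu_e^n(-z)$; moreover, when $\nu_e^n(z)>0$, the first equivalence persists for all real $r$, since $-z\notin Z_{ac}^n$ precludes any nonpositive multiple of $e\otimes I_n$ shifting $z$ into $-Z_{ac}^n$.

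For $\nu_n\le\nu_e^n$: given $z$ and $t>\nu_e^n(z)$, we have $te\otimes I_n-z\in Z_{ac}^n$, so (i) gives $\nu_n(z-te\otimes I_n)=0$, whence $\nu_n(z)\le\nu_n(z-te\otimes I_n)+t\,\nu_n(e\otimes I_n)\le t$; letting $t\downarrow\nu_e^n(z)$ yields the bound. For $\nu_e^n\le\nu_n$: if $\nu_e^n(z)=0$ there is nothing to prove, so assume $\nu_e^n(z)>0$, put $a=\nu_e^n(-z)$, and fix $t>a$. Then $z+te\otimes I_n\in Z_{ac}^n$, so (i) gives $\nu_n(-(z+te\otimes I_n))=0$. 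Unwinding the definition of $h_n$ at $z+te\otimes I_n$: the constraint ``$(z+te\otimes I_n)+s\,e\otimes I_n\in Z_{ac}^n$'' holds for every $s>0$ (because $t>a$), while ``$s\,e\otimes I_n-(z+te\otimes I_n)\in Z_{ac}^n$'', i.e.\ ``$(s-t)e\otimes I_n-z\in Z_{ac}^n$'', holds iff $s\ge t+\nu_e^n(z)$; hence $h_n(z+te\otimes I_n)=t+\nu_e^n(z)$. Combining this with (ii) and $\nu_n(-(z+te\otimes I_n))=0$ forces $\nu_n(z+te\otimes I_n)=t+\nu_e^n(z)$, and then subadditivity with $\nu_n(e\otimes I_n)\le 1$ gives $t+\nu_e^n(z)=\nu_n(z+te\otimes I_n)\le\nu_n(z)+t$, i.e.\ $\nu_e^n(z)\le\nu_n(z)$. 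Combining the two inequalities gives $\nu=\nu_e$.

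The step I expect to be the main obstacle is $\nu_e^n\le\nu_n$: it is the one that genuinely uses \emph{both} inducing conditions, and it hinges on evaluating $h_n(z+te\otimes I_n)$ \emph{exactly}, which requires care with signs in the second preliminary observation --- in particular isolating the case $\nu_e^n(z)=0$, where a negative multiple of $e\otimes I_n$ can carry $z$ into $-Z_{ac}^n$ and the clean formula for $h_n$ fails. A minor companion point is to check that the infima defining $\nu_e^n$ and $h_n$ are attained at their left endpoints; this is exactly where the archimedean hypothesis on $e$ enters, and without it the preliminary equivalences would hold only with strict inequalities.
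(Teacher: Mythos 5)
Your proof is correct and follows essentially the same route as the paper's: the inequality $\nu_n\le\nu_e^n$ is obtained identically, and your exact evaluation $h_n(z+te\otimes I_n)=t+\nu_e^n(z)$ for $t>\nu_e^n(-z)$ is the same mechanism as the paper's identity $\nu_e^n(z')=\nu_e^n(-z)+\nu_e^n(z)$ for $z'=\nu_e^n(-z)e\otimes I_n+z$, combined with subadditivity and $\nu_n(e\otimes I_n)\le 1$. Note that both you and the paper implicitly read the first defining condition of $h_n$ as $z+te\otimes I_n\in Z_{ac}^n$ rather than the printed $z-te\otimes I_n$ (with the printed sign the defining set is generically empty), which is clearly the intended definition.
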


\begin{proof} Write $e_n = e \otimes I_n$. First, if $z \in Z_{ac}^n$, then $\nu_e^n(z) = h_n(z) =
\nu_n(z)$, and if $-z \in Z_{ac}^n$, then $\nu_e^n(z)=0=\nu_n(z)$.
So assume that neither $z$ nor $-z$ is in $Z_{ac}^n$. Let $z' =
\nu_e^n(-z)e_n + z$. Note that $z' \in Z_{ac}^n$. Hence,
$\nu_e^n(z')=\nu_n(z')$. By the definition of $\nu_e$, $\nu_e^n(z')
= \nu_e^n(-z) + \nu_e^n(z)$. Also, $ \nu_n(z') \leq \nu_e^n(-z) +
\nu_n(z).$ However, $\nu_n(z) \leq \nu_e^n(z)$. For, if $t > 0$ and
$t e_n - z \in Z_{ac}^n$, then $\nu_n(z) = \nu_n(z - te_n + te_n)
\leq \nu_n(te_n)=t$. It follows that $\nu_e^n(-z) + \nu_e^n(z) =
\nu_e^n(-z) + \nu_n(z)$, and hence, $\nu_n(z)=\nu_e^n(z)$.
\end{proof}


\section{Extension problems}

We conclude by considering the problem of extending certain types of
operator-valued maps. We first relate completely gauge contractive
maps to completely positive, completely contractive, and real-completely positive maps.

Let $(V,V_{ac},h)$, $(W,W_{ac},h)$ be normal accretive operator
spaces. We call a linear map $\phi: V \rightarrow W$
\textbf{real-completely contractive} if it is gauge contractive with
respect to the $h$ gauges. Such maps are automatically completely
contractive and self-adjoint, as $h_n(x) = \|Re(x)\|_n$ in any
representation (as in Theorem \ref{accRep}). A map which is both
real-completely contractive and real-completely positive is called
\textbf{real-cpcc}.

\begin{proposition} \label{cpccExt} Let $(V,\nu)$, $(W,\o)$ be matrix gauge spaces, and let $(V,
V_{ac}, h)$ and $(W, W_{ac}, h')$ be the corresponding induced
accretive operator spaces, respectively. If $\phi: V \rightarrow W$
is completely gauge contractive, then $\phi$ is real-cpcc. Moreover,
if $\phi$ is real-cpcc, then $\phi$ is completely gauge contractive
with respect to the gauges $\nu_{max}$ and $\o$.
\end{proposition}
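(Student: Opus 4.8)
The plan is to prove the two implications separately, moving between the gauge picture and the accretive-cone picture via the relations $V_{ac}^n=\{z:\nu_n(-z)=0\}$ and $h_n(z)=\max\{\nu_n(z),\nu_n(-z)\}$ (and similarly for $W,\o,h'$). First I would handle the easy direction: suppose $\phi$ is completely gauge contractive, so $\o_n(\phi^{(n)}(z))\le\nu_n(z)$ for all $z\in M_n(V)$. If $z\in V_{ac}^n$ then $\nu_n(-z)=0$, so $\o_n(-\phi^{(n)}(z))=\o_n(\phi^{(n)}(-z))\le\nu_n(-z)=0$, hence $\phi^{(n)}(z)\in W_{ac}^n$; thus $\phi$ is real-completely positive. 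For real-complete contractivity with respect to the $h$-gauges, note $h'_n(\phi^{(n)}(z))=\max\{\o_n(\phi^{(n)}(z)),\o_n(\phi^{(n)}(-z))\}\le\max\{\nu_n(z),\nu_n(-z)\}=h_n(z)$, using that $\phi^{(n)}$ commutes with negation and linearity. So $\phi$ is real-cpcc.

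For the converse, assume $\phi$ is real-cpcc; I want $\o_n(\phi^{(n)}(z))\le\nu_{max}^n(z)$ for every $z\in M_n(V)$. The key observation is the description $\nu_{max}^n(z)=\inf\{h_n(z+p):p\in V_{ac}^n\}$. Fix $p\in V_{ac}^n$ and set $t=h_n(z+p)$. Since $h$ is a hermitian matrix gauge we have (after rescaling by $t$, assuming $t>0$; the case $t=0$ is immediate from $\complex$-properness) $h_n((z+p)/t)\le 1$, which by the inducing relation means $\nu_n((z+p)/t)\le 1$ and $\nu_n(-(z+p)/t)\le 1$, i.e. $(z+p)/t$ is ``$h$-contractive''. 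In a concrete representation (Theorem~\ref{accRep}) this says $-tI\le Re(z+p)\le tI$ at level $n$. The hard part will be extracting from real-complete contractivity of $\phi$ (which a priori only controls $h'_n(\phi^{(n)}(w))$ for a single element $w$) the inequality $\o_n(\phi^{(n)}(z))\le t$, since $\o_n$ and $\nu_n$ are only one-sided (not symmetric) gauges while $h,h'$ are symmetric.

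To overcome this I would argue as follows. Real-complete contractivity gives $h'_n(\phi^{(n)}((z+p)/t))\le h_n((z+p)/t)\le 1$, so $\o_n(\phi^{(n)}((z+p)/t))\le 1$, i.e. $\o_n(\phi^{(n)}(z+p))\le t$. Now use real-complete positivity: $p\in V_{ac}^n$ forces $\phi^{(n)}(p)\in W_{ac}^n$, so $\o_n(-\phi^{(n)}(p))=0$. Then by subadditivity of the gauge $\o_n$,
$$\o_n(\phi^{(n)}(z))=\o_n\big(\phi^{(n)}(z+p)+\phi^{(n)}(-p)\big)\le\o_n(\phi^{(n)}(z+p))+\o_n(-\phi^{(n)}(p))\le t+0=t=h_n(z+p).$$
Taking the infimum over all $p\in V_{ac}^n$ yields $\o_n(\phi^{(n)}(z))\le\nu_{max}^n(z)$, which is exactly completely gauge contractivity of $\phi:(V,\nu_{max})\to(W,\o)$. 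I should also dispose of the degenerate case $\nu_{max}^n(z)=0$: then there are $p_k\in V_{ac}^n$ with $h_n(z+p_k)\to 0$, so $\o_n(\phi^{(n)}(z))\le h_n(z+p_k)\to 0$ by the same estimate, giving $\o_n(\phi^{(n)}(z))=0$. This completes the argument; the only real subtlety, as noted, is that one must route the one-sided estimate on $\o_n$ through the symmetric gauges $h,h'$ and then peel off the accretive perturbation $p$ using positivity.
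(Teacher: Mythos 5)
Your proposal is correct and follows essentially the same route as the paper: the forward direction uses the inducing relations exactly as in the paper's proof, and the converse is the paper's one-line chain $\o_n(\phi^{(n)}(x)) \leq \o_n(\phi^{(n)}(x+p)) \leq h_n(x+p)$ followed by an infimum over $p \in V_{ac}^n$, which you have merely unpacked (the peeling-off of $p$ via subadditivity and positivity, and the passage through $h'_n$). The rescaling by $t$ and the appeal to a concrete representation are unnecessary detours, but they do not affect correctness.
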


\begin{proof} First, assume $\phi$ is completely gauge contractive.
If $x \in V_{ac}^n$ then $\nu_n(-x)=0$. Hence, $\o_n(-\phi^{(n)}(x))
\leq \nu_n(-x)=0$. So $\phi^{(n)}(x) \in W_{ac}^n$. So $\phi$ is
real-completely positive. Also, since $\max\{ \o_n(\phi^{(n)}(x)),
\o_n(-\phi^{(n)}(x)) \} \leq \max \{ \nu_n(x), \nu_n(-x)\}$, we see
that $\phi$ is real-completely contractive.

Now, assume that $\phi$ is real-cpcc. Then for each
$x \in M_n(V)$ and $p \in V_{ac}^n$, we have
$$\o_n(\phi^{(n)}(x)) \leq \o_n(\phi^{(n)}(x) +
\phi^{(n)}(p)) \leq h_n(x + p).$$ Taking an infimum over all $p \in
V_{ac}^n$ completes the proof.
\end{proof}

The following fundamental theorem was demonstrated by Arveson
\cite{Arveson69}.

\begin{theorem}[Arveson] \label{Arveson} Let $S \subset \tilde{S}$ be an
inclusion of operator systems, and let $\phi: S \rightarrow B(H)$ be
a completely positive map. Then there exists a completely positive extension $\tilde{\phi}: \tilde{S}
\rightarrow B(H)$. \end{theorem}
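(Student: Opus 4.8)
The plan is to deduce the statement from the special case where $H$ is finite-dimensional, and to settle that case through the classical duality between completely positive maps into $M_n$ and positive linear functionals on $M_n(S)$, combined with the Krein extension theorem for positive functionals on an ordered vector space with an order unit.

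First I would treat $H = \complex^n$. Recall (see \cite[Chapter 6]{paulsen2002completely}) the affine bijection $\phi \leftrightarrow s_\phi$ between linear maps $\phi: S \to M_n$ and linear functionals $s_\phi : M_n(S) \to \complex$, where $s_\phi([a_{k,l}]) = \sum_{k,l=1}^n \langle \phi(a_{k,l}) \xi_l, \xi_k \rangle$ for the standard orthonormal basis $\xi_1, \dots, \xi_n$ of $\complex^n$; equivalently $s_\phi(A) = \langle \phi^{(n)}(A) \hat\xi, \hat\xi \rangle$ with $\hat\xi = \sum_k \xi_k \otimes \xi_k$. The two facts I would invoke are that a map $\phi: S \to M_n$ is completely positive if and only if it is $n$-positive, and that $\phi$ is $n$-positive if and only if $s_\phi$ is a positive linear functional on the operator system $M_n(S)$. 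Now $M_n(S)$ is a subspace of the operator system $M_n(\tilde S)$ and contains its order unit $I_n$ (which lies in $M_n(S)$ since the unit of $S$ equals that of $\tilde S$); hence by Krein's extension theorem the positive functional $s_\phi$ extends to a positive linear functional $\tilde s$ on $M_n(\tilde S)$. Transporting $\tilde s$ back through the bijection produces a completely positive map $\tilde\phi : \tilde S \to M_n$ with $\tilde\phi|_S = \phi$.

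For general $H$, let $\mathcal F$ be the directed set of finite-dimensional subspaces $F \subseteq H$ and $P_F$ the orthogonal projection onto $F$. Each compression $\phi_F := P_F \phi(\cdot) P_F$, regarded as a map $S \to B(F) \cong M_{\dim F}$, is completely positive, so by the previous paragraph it extends to a completely positive map $\psi_F : \tilde S \to B(F) \subseteq B(H)$; composed with the (completely positive) inclusion $B(F) \hookrightarrow B(H)$ this gives completely positive maps $\psi_F : \tilde S \to B(H)$. Since each $\psi_F$ is completely positive on an operator system, $\|\psi_F\| = \|\psi_F(e)\| = \|P_F \phi(e) P_F\| \le \|\phi(e)\|$, so the net $(\psi_F)$ lies in a bounded set of linear maps $\tilde S \to B(H)$, which is compact in the point-weak$^*$ (``BW'') topology because $B(H)$ is a dual space. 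Passing to a subnet converging to a linear map $\psi: \tilde S \to B(H)$, I would check that $\psi$ is completely positive (each cone in $M_m(B(H))$ is weak$^*$-closed, and $\psi^{(m)}$ is a weak$^*$-limit of the positive maps $\psi_F^{(m)}$) and that $\psi|_S = \phi$: for $s \in S$ and $\xi, \eta \in H$ one has $\langle \psi_F(s)\xi, \eta\rangle = \langle \phi(s) P_F \xi, P_F \eta\rangle \to \langle \phi(s)\xi, \eta\rangle$ since $P_F \to I_H$ strongly. Then $\tilde\phi := \psi$ is the desired extension.

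The step I expect to be the real obstacle is the finite-dimensional case, and inside it the only genuinely nontrivial ingredient is the equivalence ``$n$-positive $\Leftrightarrow$ completely positive'' for maps into $M_n$ (Choi's matrix trick), which is exactly what collapses the problem to extending a single positive functional; the Krein extension itself is a routine Hahn--Banach argument using the sublinear functional $p(a) = \inf\{ t > 0 : t I_n - a \in M_n(\tilde S)_+\}$ on $(M_n(\tilde S))_{sa}$, which dominates $s_\phi$ on $(M_n(S))_{sa}$. A secondary point that needs care in the passage to general $H$ is ensuring the limit map restricts to $\phi$ on $S$ and not merely to a compression of it --- this is precisely why one compresses on both sides in defining $\phi_F$ and why the strong convergence $P_F \to I_H$ is used.
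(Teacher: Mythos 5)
The paper does not prove this theorem; it is quoted as a known result with a citation to Arveson's 1969 paper, so there is no in-paper argument to compare against. Your proof is correct and is essentially the classical one (Arveson's original argument as organized in Paulsen's book, Theorems 6.1 and 7.5): reduce to $H=\complex^n$ via the correspondence $\phi\leftrightarrow s_\phi$, use ``$n$-positive $\Rightarrow$ completely positive'' for maps into $M_n$ together with Krein extension of the positive functional $s_\phi$ from $M_n(S)$ to $M_n(\tilde S)$, and then handle general $H$ by compressing to finite-dimensional subspaces and extracting a point-weak$^*$ limit from the bounded net $(\psi_F)$, using $\|\psi_F\|=\|\psi_F(e)\|\le\|\phi(e)\|$ and the weak$^*$-closedness of the positive cones. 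The only point worth tightening is the Krein step: the sublinear functional $p(a)=\inf\{t>0: tI-a\in M_n(\tilde S)_+\}$ dominates $s_\phi$ on the self-adjoint part only after normalizing so that $s_\phi(I)\le 1$ (otherwise use $s_\phi(I)\,p(a)$), and one should record that positivity of the Hahn--Banach extension follows from $p(-a)=0$ for $a\ge 0$ before complexifying; both are routine, as you note. Your care in checking that the limit map restricts to $\phi$ (rather than a compression of it), via $P_F\to I_H$ strongly and boundedness of the net, is exactly the right point to worry about.
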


\noindent We will use the following fact, which follows from Theorem
2.6 of \cite{BeardenBlecherRoots}. We thank David Blecher for
providing this reference. To keep our paper self-contained, we
present a brief proof as a corollary of Theorem \ref{Arveson} above.

\begin{corollary}[Bearden, Blecher, Sharma \cite{BeardenBlecherRoots}] \label{caExt} Let $Z \subset \tilde{Z}$ be an
inclusion of unital operator spaces, and let $\phi: Z \rightarrow
B(H)$ be a real-completely positive map. Then there exists a
real-completely positive extension $\tilde{\phi}: \tilde{Z}
\rightarrow B(H)$. \end{corollary}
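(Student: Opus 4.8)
The plan is to reduce the statement to Arveson's extension theorem (Theorem \ref{Arveson}) by passing to the operator systems generated by $Z$ and $\tilde{Z}$. Since $Z$ and $\tilde{Z}$ are unital operator spaces, regard them as subspaces of some $B(K)$ with $I_K \in Z \subseteq \tilde{Z}$, and set $S = Z + Z^*$ and $\tilde{S} = \tilde{Z} + \tilde{Z}^*$. These are operator systems in $B(K)$ with $S \subseteq \tilde{S}$ a unital inclusion (in fact $S$ is a $*$-closure of $Z$ in the sense of Theorem \ref{adjointify}, and likewise for $\tilde{S}$). The first thing I would record is that $\phi$ is automatically self-adjoint on $Z_{sa}^1$: if $s = s^* \in Z$ then $Re(\pm is) = 0$, so both $is$ and $-is$ are accretive, and real-complete positivity of $\phi$ gives $\pm Re(\phi(is)) \geq 0$, hence $Re(\phi(is)) = 0$; thus $\phi(is)$ is skew-adjoint and $\phi(s) = -i\phi(is)$ is self-adjoint. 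By conjugate-linearity this yields $\phi(a) = \phi(a^*)^*$ for every $a \in Z \cap Z^*$.

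Next I would build a completely positive map on $S$. Define $\Phi \colon S \to B(H)$ by $\Phi(x + y^*) = \phi(x) + \phi(y)^*$ for $x,y \in Z$. This is well defined: if $a + b^* = 0$ with $a,b \in Z$, then $a^* = -b \in Z$, so $a \in Z \cap Z^*$ and $\phi(a) + \phi(b)^* = \phi(a) - \phi(a^*)^* = 0$ by the observation above (this is exactly the content of Lemma \ref{starClosureLem} for the $*$-closure $S$). Clearly $\Phi$ is linear, restricts to $\phi$ on $Z$, and is self-adjoint since $(x+y^*)^* = y + x^*$. To check complete positivity of $\Phi$ as a map of operator systems, take $z \in M_n(S)_+$ and write $z = x + y^*$ with $x,y \in M_n(Z)$; then $Re(x+y) = Re(z) = z \geq 0$, so $x + y$ is accretive in $Z$, and since $\phi$ is real-completely positive, $Re(\Phi^{(n)}(z)) = Re(\phi^{(n)}(x) + \phi^{(n)}(y)^*) = Re(\phi^{(n)}(x+y)) \geq 0$. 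As $z$ and $\Phi$ are self-adjoint, $\Phi^{(n)}(z) = Re(\Phi^{(n)}(z)) \geq 0$, so $\Phi$ is completely positive.

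Finally I would apply Theorem \ref{Arveson} to the inclusion $S \subseteq \tilde{S}$ and the completely positive map $\Phi$, obtaining a completely positive extension $\tilde{\Phi} \colon \tilde{S} \to B(H)$, and set $\tilde{\phi} = \tilde{\Phi}|_{\tilde{Z}}$. A completely positive map on an operator system is self-adjoint, so for $z \in M_n(\tilde{Z})$ with $Re(z) \geq 0$ we have $Re(z) \in M_n(\tilde{S})_+$ and hence $Re(\tilde{\phi}^{(n)}(z)) = \tilde{\Phi}^{(n)}(Re(z)) \geq 0$; thus $\tilde{\phi}$ is real-completely positive, and it extends $\phi$ because $\tilde{\Phi}$ extends $\Phi$ and $\Phi|_Z = \phi$. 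The only step that is not bookkeeping is the well-definedness of $\Phi$, and the crux there is the self-adjointness of $\phi$ on $Z_{sa}^1$, which the accretivity of both $is$ and $-is$ supplies for free; granting that, the corollary follows immediately from Arveson's theorem.
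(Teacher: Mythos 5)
Your proof is correct, and its skeleton is the same as the paper's: extend from $Z$ to the operator system $S=Z+Z^*$, push the extension up to $\tilde S=\tilde Z+\tilde Z^*$ by Arveson's theorem (Theorem \ref{Arveson}), and restrict back to $\tilde Z$. The difference lies in how the first step is handled. The paper rescales $\phi$ to a complete contraction and then invokes Proposition 1.2.8 of \cite{Arveson69} to get the completely positive extension to $Z+Z^*$; this quietly uses the fact that a real-completely positive map on a unital operator space is completely bounded, which is part of the Bearden--Blecher--Sharma machinery. You instead write down the extension explicitly, $\Phi(x+y^*)=\phi(x)+\phi(y)^*$, and verify well-definedness and complete positivity directly from real-complete positivity: the key observations that $\pm is$ accretive forces $\phi(s)=\phi(s)^*$ for $s\in Z_{sa}$ (hence $\phi(a)=\phi(a^*)^*$ on $Z\cap Z^*$, which is exactly what Lemma \ref{starClosureLem} encodes abstractly), and that $z=x+y^*\ge 0$ implies $x+y$ is accretive so $\Phi^{(n)}(z)=Re(\phi^{(n)}(x+y))\ge 0$, are both checked correctly. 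What your route buys is self-containedness: no rescaling, no appeal to Arveson's Proposition 1.2.8, and no norm considerations at all in the passage from $Z$ to $Z+Z^*$; it also makes visible that the extension to $Z+Z^*$ is unique among self-adjoint maps. What the paper's citation buys is brevity. Either way the corollary reduces to Theorem \ref{Arveson}, so your argument stands.
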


\begin{proof} By rescaling $\phi$, we may assume that $\phi$ is completely contractive (with respect to the norm described in Theorem \ref{UnitalOpSp}). By Proposition 1.2.8 of \cite{Arveson69}, $\phi$ extends to a
unique completely positive map on $Z + Z^*$. By Theorem \ref{Arveson}, we may further
extend $\phi$ to a completely positive map $\tilde{\phi}: \tilde{Z} + \tilde{Z}^*
\rightarrow B(H)$. The restriction of $\tilde{\phi}$ to $\tilde{Z}$ is the
desired extension. \end{proof}

The next theorem follows from a result of Effros and Winkler on the
extension of completely gauge contractive maps (Theorem 6.9,
\cite{Winkler}). When $V \subset V'$ is an inclusion of matrix gauge
spaces, we shall assume $V$ is endowed with the restriction of the
gauge $\nu$ on $V'$ to $V$. Since $B(H)$ is an operator system, we
may regard $B(H)$ as a matrix gauge space with the unique inducing
gauge $\nu_e$.

\begin{theorem} \label{EffrosWinkler} Let $(V',\nu)$
be a matrix gauge space with a $\complex$-proper gauge $\nu$ and a subspace $V$, and let $\phi: V
\rightarrow B(H)$ be a completely gauge contractive map. Then there
exists a completely gauge contractive extension $\tilde{\phi}:
V' \rightarrow B(H)$. \end{theorem}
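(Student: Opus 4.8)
The plan is to reduce to the unital case, where Corollary \ref{caExt} applies, via the unitization of Definition \ref{DefUnitize}. Form the unitization $(\tilde{V'},u)$ of $(V',\nu)$. By Lemma \ref{opSysRep}, $(\tilde{V'},\tilde{V'}_{ac},e)$ with $e=(0,1)$ is an accretive archimedean matrix-order unit space whose order-unit gauge (in the sense of Lemma \ref{archimedean order unitgauge}) coincides with $u$, and the map $\iota:z\mapsto(z,0)$ is a completely gauge isometric embedding $V'\to\tilde{V'}$. Applying Theorem \ref{UnitalOpSp}, we realize $\tilde{V'}$ as a unital operator space in some $B(K)$. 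Because $\nu$ restricted to $V$ is by hypothesis the gauge on $V$, and because $X_t^{-1/2}AX_t^{-1/2}\in M_n(V)$ whenever $A\in M_n(V)$, the unitization formula shows that $u$ restricted to $M_n(\tilde V)$ (where $\tilde V=V\times\complex$) is exactly the unitization gauge of $(V,\nu|_V)$; hence $\tilde V$ sits inside $\tilde{V'}$ as a unital subspace, and $\iota$ carries $V$ into $\tilde V$.

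Next I would extend $\phi$ to the unital linear map $\hat\phi:\tilde V\to B(H)$, $\hat\phi(v,\lambda)=\phi(v)+\lambda I_H$, and check that $\hat\phi$ is completely gauge contractive from $(\tilde V,u)$ to $B(H)$ with the gauge of Example \ref{concrete} (which is the order-unit gauge $\nu_e$ on $B(H)$ by Lemma \ref{GaugeNormLemma}). Fix $(A,X)\in M_n(\tilde V)$ and $t>u_n(A,X)$, so $X_t:=tI_n-Re(X)\gg0$ and $\nu_n(X_t^{-1/2}AX_t^{-1/2})\le1$. Since $\phi$ is completely gauge contractive and $\phi^{(n)}$ commutes with conjugation by scalar matrices, $\|Re(X_t^{-1/2}\phi^{(n)}(A)X_t^{-1/2})_+\|\le1$; as $X_t$ is a positive scalar matrix, this real part equals $X_t^{-1/2}Re(\phi^{(n)}(A))X_t^{-1/2}$, and the equivalence $\|a_+\|\le1\iff a\le I_n$ then gives $Re(\phi^{(n)}(A))\le X_t$, i.e. $Re(\hat\phi^{(n)}(A,X))=Re(\phi^{(n)}(A))+Re(X)\le tI_n$. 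Letting $t\downarrow u_n(A,X)$ yields $\nu_e^n(\hat\phi^{(n)}(A,X))=\|Re(\hat\phi^{(n)}(A,X))_+\|\le u_n(A,X)$. In particular $\hat\phi$ is real-completely positive (Proposition \ref{cpccExt}).

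Now apply Corollary \ref{caExt} to the inclusion of unital operator spaces $\tilde V\subset\tilde{V'}$ and the real-completely positive map $\hat\phi$: there is a real-completely positive extension $\Psi:\tilde{V'}\to B(H)$. Since $\Psi(e)=\hat\phi(e)=I_H$, $\Psi$ is unital, and a unital real-completely positive map is gauge contractive for the order-unit gauges: if $t e\otimes I_n-w\in\tilde{V'}_{ac}^n$ then $tI_H\otimes I_n-\Psi^{(n)}(w)$ is accretive, so $\nu_e^n(\Psi^{(n)}(w))\le t$, whence $\nu_e^n(\Psi^{(n)}(w))\le u_n(w)$ by Lemmas \ref{opSysRep} and \ref{archimedean order unitgauge}. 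Thus $\Psi$ is completely gauge contractive. Finally set $\tilde\phi:=\Psi\circ\iota:V'\to B(H)$. As a composition of completely gauge contractive maps, $\tilde\phi$ is completely gauge contractive, and $\tilde\phi(v)=\Psi(v,0)=\hat\phi(v,0)=\phi(v)$ for $v\in V$, so $\tilde\phi$ is the required extension.

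The main obstacle is the second paragraph: repackaging the non-unital gauge data of $\phi$ as unital order data, i.e. verifying that $\hat\phi$ stays gauge contractive for the unitization gauges (and hence becomes a unital real-completely positive map to which Corollary \ref{caExt} applies); once that translation is in hand the remaining steps are bookkeeping about the unitization and the order-unit gauge.
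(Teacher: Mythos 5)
Your proof is correct and follows essentially the same route as the paper's: unitize, show the unital extension to $\tilde{V}$ is real-completely positive, extend via Corollary \ref{caExt}, and restrict back to $V'$. The only difference is that your second paragraph re-derives Lemma \ref{extLemma} from scratch (and you spell out the compatibility of the two unitizations and the final restriction step, which the paper leaves implicit), but the underlying argument is the same.
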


Effros and Winkler's proof of Theorem 6.9 in \cite{Winkler} is
non-trivial, even with the assumption that $\nu$ is
$\complex$-proper and finite.
We would like to demonstrate a short proof of Theorem
\ref{EffrosWinkler} using our results. We first prove the following.

\begin{lemma} \label{extLemma} Let $(V,\nu)$ be a matrix gauge space
and $S$ a unital operator system with order unit $e$, regarded as a
matrix gauge space with the matrix gauge $\nu_e$. If $\phi:V
\rightarrow S$ is completely gauge contractive, then the unique
unital extension $\tilde{\phi}: \tilde{V} \rightarrow S$ is
real-completely positive, where $\tilde{V}$ is the unitization of
$V$. \end{lemma}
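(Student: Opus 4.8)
The plan is to reduce to the unitization $(\tilde V, u)$ of $V$ constructed in Definition \ref{DefUnitize}, whose matrix gauge $u$ satisfies $u_n(z,0) = \nu_n(z)$ and which carries the accretive archimedean matrix-order unit $e = (0,1)$ by Lemma \ref{opSysRep}. Given a completely gauge contractive $\phi \colon V \to S$, the unital extension $\tilde\phi \colon \tilde V \to S$ is defined by $\tilde\phi(z,\lambda) = \phi(z) + \lambda e$; the point is to show $\tilde\phi$ is real-completely positive, i.e. $\tilde\phi^{(n)}(\tilde V_{ac}^n) \subseteq S_{ac}^n$, where $\tilde V_{ac}^n = \{(A,X) : u_n(-A,-X) = 0\}$ and $S_{ac}^n = \{T \in M_n(S) : Re(T) \geq 0\}$ (equivalently, by Lemma \ref{archimedean order unitgauge}, $\{T : \nu_e^n(-T) = 0\}$).

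The key step is to unwind what $(A,X) \in \tilde V_{ac}^n$ means via the formula for $u$. By Lemma \ref{opSysRep}, Equation (\ref{unitizationGaugeEq}), $u_n(-A,-X) = \inf\{t > 0 : te \otimes I_n - (-A,-X) \in \tilde Z_{ac}^n\}$, so $(A,X) \in \tilde V_{ac}^n$ exactly when for every $t > 0$ we have $te \otimes I_n + (A,X) \in \tilde V_{ac}^n$; chasing the definition of $u$, this says that for each $t > 0$ there is a sequence $s_k \downarrow 0$ with $X_{t+s_k} := (t+s_k)I_n - Re(X) \gg 0$ and $\nu_n\big(X_{t+s_k}^{-1/2} A X_{t+s_k}^{-1/2}\big) \leq 1$. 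Applying the completely gauge contractive map $\phi$ and the scalar identity $\phi^{(n)}(X_{t+s_k}^{-1/2} A X_{t+s_k}^{-1/2}) = X_{t+s_k}^{-1/2}\, \phi^{(n)}(A)\, X_{t+s_k}^{-1/2}$ (scalar matrices pull out of $\phi^{(n)}$), we get $\nu_e^n\big(X_{t+s_k}^{-1/2}\phi^{(n)}(A) X_{t+s_k}^{-1/2}\big) \leq 1$ in $S$, i.e. $Re\big(X_{t+s_k}^{-1/2}\phi^{(n)}(A) X_{t+s_k}^{-1/2}\big) \leq I_H \otimes I_n$. Conjugating back by $X_{t+s_k}^{1/2}$ (a positive invertible scalar matrix, so it respects the operator order on $Re$ since $\phi$ is self-adjoint on the relevant elements), this becomes $Re(\phi^{(n)}(A)) \leq X_{t+s_k} = (t+s_k)I_n - Re(X)$; that is, $Re(\phi^{(n)}(A)) + Re(X) \leq (t+s_k) I_H \otimes I_n$. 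Now $\tilde\phi^{(n)}(A,X) = \phi^{(n)}(A) + X \otimes I_H$ (identifying $e = I_H$), so $Re(\tilde\phi^{(n)}(A,X)) \leq (t+s_k) I_H \otimes I_n$. Letting $s_k \downarrow 0$ and then $t \downarrow 0$ gives $Re(\tilde\phi^{(n)}(A,X)) \leq 0$.

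Wait — that produces $\leq 0$, whereas I want $\tilde\phi^{(n)}(A,X) \in S_{ac}^n$, i.e. $Re \geq 0$. The resolution is a sign bookkeeping matter: membership $(A,X) \in \tilde V_{ac}^n$ should be unwound as $u_n(-A,-X) = 0$, which via (\ref{unitizationGaugeEq}) forces $(-X)_{s_k} = s_k I_n - Re(-X) = s_k I_n + Re(X) \gg 0$ and $\nu_n((-X)_{s_k}^{-1/2}(-A)(-X)_{s_k}^{-1/2}) \leq 1$; pushing through $\phi$ and conjugating back yields $Re(-\phi^{(n)}(A)) \leq s_k I_n + Re(X)$, hence $Re(\phi^{(n)}(A)) + Re(X) \geq -s_k I_H \otimes I_n$, and letting $s_k \downarrow 0$ gives $Re(\tilde\phi^{(n)}(A,X)) \geq 0$, as desired. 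I will be careful to run the argument on $u_n(-A,-X)$ from the start rather than converting to an order-unit inequality first.

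The main obstacle is the limiting/approximation argument: the infimum defining $u_n$ need not be attained, so one genuinely works with sequences $s_k \downarrow 0$, and one must check that the chain of inequalities survives the passage to the limit. This is where closedness of the cone $S_{ac}^n$ (equivalently, norm-closedness of $\{T : Re(T) \geq 0\}$ in $B(H^n)$, which is automatic) is used: $Re(\tilde\phi^{(n)}(A,X)) \geq -s_k I_H \otimes I_n$ for all $k$ and all $t$ forces $Re(\tilde\phi^{(n)}(A,X)) \geq 0$. A secondary point to verify carefully is that scalar matrices factor through $\phi^{(n)}$ and that $\phi$, being completely gauge contractive into an operator system, behaves well enough with respect to $Re$ — but since $h_n$-contractivity on the operator-system side is captured entirely by $\nu_e$, and $\nu_e^n(-T) = 0 \iff Re(T) \geq 0$, this is exactly the content of Lemma \ref{archimedean order unitgauge} and needs no separate argument. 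Everything else is routine bookkeeping with the formula for $u$.
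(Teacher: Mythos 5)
Your proposal is correct and, after the sign correction you make midway, is essentially identical to the paper's proof: both unwind $u_n(-A,-X)=0$ via the definition of $u$ to produce a sequence $s_k \downarrow 0$ with $(-X)_{s_k} \gg 0$ and $\nu_n\bigl((-X)_{s_k}^{-1/2}(-A)(-X)_{s_k}^{-1/2}\bigr) \leq 1$, push this through $\phi$ by complete gauge contractivity, conjugate back by the positive invertible scalar matrix $(-X)_{s_k}^{1/2}$, and let $s_k \downarrow 0$ using closedness of the positive cone in $B(H)$.
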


\begin{proof} Suppose that $(A,X) \in \tilde{V}_{ac}^n$, i.e., suppose that
$u_n(-A,-X) = 0$. By Definition \ref{DefUnitize}, there exists a
decreasing sequence $\{t_k > 0\}$ with $t_k \rightarrow 0$, $t_kI_n
+ Re(X) = (-X)_{t_k} \gg 0$ and $\nu_n(-(-X)_{t_k}^{-1/2} A
(-X)_{t_k}^{-1/2}) \leq 1$ for each $k$.

Set $B_k = -(-X)_{t_k}^{-1/2} A (-X)_{t_k}^{-1/2}$. Since $\phi$ is
completely gauge contractive, we have \begin{equation}
\label{RealIneqEq} Re(\phi^{(n)}(B_k)) \leq \nu_n(B_k)e \otimes I_n
\leq e \otimes I_n.\end{equation} Since $Re(\phi^{(n)}(B_k)) =
(-X)_{t_k}^{-1/2}Re(-A)(-X)_{t_k}^{-1/2}$, we may conjugate Equation
(\ref{RealIneqEq}) above by $(-X)_{t_k}^{1/2}$ to obtain $ -Re(A)
\leq (-X)_{t_k} \otimes e = e \otimes Re(X) + t_k e \otimes I_n.$
Hence, $ 0 \leq Re(A) + e \otimes Re(X) + t_k e \otimes I_n =
Re(\tilde{\phi}^{(n)}(A,X)) + t_k e \otimes I_n.$ It follows that
$Re(\tilde{\phi}^{(n)}(A,X)) \geq 0$. Since this holds for all
$(A,X) \in \tilde{V}_{ac}^n$, $\phi$ is real-completely positive.
\end{proof}

We now prove Theorem \ref{EffrosWinkler}.

\begin{proof} Assume $\phi:V \rightarrow B(H)$ is completely gauge
contractive. Write $\tilde{V}$ and $\tilde{V'}$ for the unitizations
of $V$ and $V'$, respectively. By Lemma \ref{extLemma}, the unital
extension of $\phi$ to $\tilde{V}$ is real-completely positive. By
Corollary \ref{caExt}, we may further extend $\phi$ to a
real-completely positive map $\tilde{\phi}: \tilde{V'} \rightarrow
B(H)$. The restriction of $\tilde{\phi}$ to $V'$ is a completely
gauge contractive extension of $\phi$.
\end{proof}

We now consider the extension of real-cpcc maps. We first
demonstrate by example that real-cpcc maps need not have completely
positive completely contractive extensions to their unitizations.
For simplicity, we consider examples involving self-adjoint operator
spaces.

\begin{example} Fix $n > 2$ and let $V = \span \{x:=(-n,0,1)\} \subset \complex^3$, where $\complex^3$ is
regarded as the diagonal of $M_3 = B(\complex^3)$. Define $\phi: V
\rightarrow \complex$ by setting $\phi(x) = n$. Then $\phi$ is
real-cpcc (trivially), but $\phi$ has no completely positive
completely contractive extension to $\complex^3$, as any such
extension $\tilde{\phi}$ must satisfy $\tilde{\phi}(1,1,1) \geq
\phi(x) = n$. \end{example}

For a non-trivial example, consider the following.

\begin{example} Fix $n > 2$, and let $V_n = \span \{
x_n:=(n,n,0), y_n:=(0,n,1) \} \subset \complex^3$. Then the
functional defined by $\phi_n(x_n) = 0$ and
$\phi_n(y_n)=\frac{n}{2}$ is real-cpcc, but has no completely
positive completely contractive extension to $\complex^3$. To see
that $\phi$ is real-cpcc, it suffices to show that $\phi$ is
positive and that $\phi$ is contractive on $(V_n)_{sa}$. It is clear
that $\phi$ is positive. If $\| tx_n + ry_n \| \leq 1$ for some $t,r
\in \reals$, then $|t| \leq \frac{1}{n}$ and $|t + r| \leq
\frac{1}{n}$. Hence, $|r| \leq |r + t| + |t| \leq \frac{1}{n} +
\frac{1}{n}$. Therefore \[
|\phi(tx_n + ry_n)| = \frac{n|r|}{2} \leq 1.
\] So $\phi$ is contractive. However, any extension
$\tilde{\phi}$ of $\phi$ satisfies $$\tilde{\phi}(1,1,1) \geq
\tilde{\phi}(y_n - x_n) = \frac{n}{2} > 1.$$
\end{example}

In fact, a modification of the above example shows that completely
positive maps need not have completely positive extensions in the
absence of a unit.

\begin{example} For each $n > 2$, let $V_n = \span \{ x_n=(n,n,0), y_n=(0,n,1) \} \subset
\complex^3$, and let $V = \bigoplus_{n>2} V_n \subset l^\infty$.
Define $\phi: V \rightarrow B(l^2)$ by $\phi = \oplus_{n>2} \phi_n$,
where the $\phi_n$'s are the functionals from the previous example.
Then $\phi$ has no completely positive extension to $l^\infty$. For, any positive extension $\tilde{\phi}$ of $\phi$ would have to satisfy $\tilde{\phi}(I) \geq \phi_n(y_n - x_n) \oplus 0 \geq \frac{n}{2} \oplus 0$ for each $n > 2$.
\end{example}

The following theorem links the above failures to the non-uniqueness
of inducing gauges. We say that a normal accretive operator space
$(Z,Z_{ac},h)$ has the \textbf{real-cpcc extension property} if for
each operator system $S$ and each completely isometric real-complete
order embedding $j:Z \rightarrow S$, every real-cpcc map $\phi:Z
\rightarrow B(K)$ has a completely positive completely contractive
extension $\tilde{\phi}: S \rightarrow B(K)$ (for each Hilbert space
$K$).

\begin{theorem} \label{BigcpccExt} An operator space $Z$ has the real-cpcc extension property if and only if $(Z,
Z_{ac}, h)$ has a unique inducing gauge, where $Z_{ac}$ is the
matrix cone of accretive operators and $h_n(x)=\|Re(x)\|_n$.
\end{theorem}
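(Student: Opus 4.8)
The plan is to prove both implications by relating the two inducing gauges $\nu_e$ (coming from any unital operator system containing $Z$) and $\nu_{max}$ (the maximal inducing gauge of Proposition~\ref{maxGauge}), and by using the dictionary between completely gauge contractive maps and real-cpcc maps established in Proposition~\ref{cpccExt}, together with the extension machinery of Corollary~\ref{caExt} and Theorem~\ref{EffrosWinkler}.

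For the ``only if'' direction, suppose $Z$ has the real-cpcc extension property, and let $\nu$ be an arbitrary inducing gauge for $(Z,Z_{ac},h)$. By Proposition~\ref{maxGauge} we already have $\nu_n \leq \nu_{max}^n$ for all $n$, so it suffices to show $\nu_{max}^n(z) \leq \nu_n(z)$. The idea is to build, from the gauge $\nu$, a concrete representation: by Theorem~\ref{mainThm} there is a completely gauge isometric embedding $\phi:(Z,\nu) \rightarrow B(K)$ for some Hilbert space $K$; composing with the real-complete order structure, $\phi$ is a real-cpcc map on $Z$ (with its accretive cone $Z_{ac}$ and norm $h$, which $\nu$ induces). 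Viewing $Z$ inside any fixed operator system $S$ with a completely isometric real-complete order embedding $j$ (for instance, take $S$ from Corollary~\ref{caExt} applied to the unitization, or simply use the containing operator system in which $Z$ already sits), the real-cpcc extension property produces a completely positive completely contractive $\tilde\phi: S \rightarrow B(K)$ extending $\phi \circ j^{-1}$. Now $S$ carries the unique inducing gauge $\nu_e$, and $\tilde\phi$, being cpcc, is completely gauge contractive with respect to $\nu_e$ on $S$ and the gauge $\nu_e$ on $B(K)$ (since $B(K)$ is an operator system with unique inducing gauge $\nu_e$). Restricting back to $Z$, completely gauge contractivity of $\tilde\phi|_Z$ forces $\nu_{e,n}(z) \geq$ (the gauge of $\phi^{(n)}(z)$ in $B(K^n)$) $= \nu_n(z)$ for all $z \in M_n(Z)$. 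Since $\nu_e$ is an inducing gauge, Proposition~\ref{maxGauge} also gives $\nu_{e,n} \leq \nu_{max}^n$, but to close the argument I need the reverse: here I use that $\nu_e$ restricted from $S$ is itself an inducing gauge for $Z$ and then run the extension property once more, or — cleaner — observe directly that the real-cpcc extension property applied to the identity-type map forces $\nu_{max}^n(z) \leq \nu_{e,n}(z)$, hence all inducing gauges coincide with $\nu_{max}$.

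For the ``if'' direction, suppose $(Z,Z_{ac},h)$ has a unique inducing gauge, which must then be $\nu_{max}$. Let $j:Z \rightarrow S$ be a completely isometric real-complete order embedding into an operator system $S$, and let $\phi:Z \rightarrow B(K)$ be real-cpcc. By Proposition~\ref{cpccExt}, $\phi$ is completely gauge contractive with respect to $\nu_{max}$ on $Z$ and $\nu_e$ on $B(K)$. Now $S$, as an operator system, carries the gauge $\nu_e$, and since $j$ is a completely isometric real-complete order embedding, the restriction of $\nu_e^S$ to $j(Z)$ is an inducing gauge for $(Z,Z_{ac},h)$ — hence, by uniqueness, it equals $\nu_{max}$. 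Therefore $j$ is completely gauge isometric from $(Z,\nu_{max})$ into $(S,\nu_e)$, and $\phi \circ j^{-1}: j(Z) \rightarrow B(K)$ is completely gauge contractive. By Theorem~\ref{EffrosWinkler} (applied to the inclusion $j(Z) \subset S$ of matrix gauge spaces, noting $\nu_e$ is $\complex$-proper since $S$ is an operator system), $\phi \circ j^{-1}$ extends to a completely gauge contractive $\tilde\phi: S \rightarrow B(K)$. Finally, by Proposition~\ref{cpccExt} again, a completely gauge contractive map on the operator system $S$ (with its gauge $\nu_e$, which induces the accretive structure and norm on $S$) is real-cpcc, in particular completely positive and completely contractive; this $\tilde\phi$ is the desired extension.

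The main obstacle I anticipate is the bookkeeping in the ``only if'' direction: carefully verifying that the restriction of $\nu_e$ (from an arbitrary containing operator system) to $Z$ is genuinely an \emph{inducing} gauge for $(Z, Z_{ac}, h)$ — i.e. that it recovers exactly $Z_{ac}$ via $\{z : \nu_{e,n}(-z)=0\}$ and exactly $h_n$ via $\max\{\nu_{e,n}(z),\nu_{e,n}(-z)\}$ — and then leveraging the extension property to sandwich an arbitrary inducing gauge $\nu$ between $\nu_e$-type gauges and $\nu_{max}$ so that equality propagates. The matrix-level estimates and the interplay between ``completely gauge contractive with respect to $\nu_e$'' and ``cpcc'' on operator systems are routine given Proposition~\ref{cpccExt} and Lemma~\ref{archimedean order unitgauge}, so the crux is purely the logical structure of deducing \emph{uniqueness} from the existence of \emph{arbitrary} operator system extensions.
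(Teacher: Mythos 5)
Your ``if'' direction is correct and is essentially the paper's argument: uniqueness forces the restriction of $\nu_e$ from any containing operator system $S$ to equal $\nu_{max}$, a real-cpcc map is then completely gauge contractive for the $\nu_e$-gauges by Proposition \ref{cpccExt}, it extends by Theorem \ref{EffrosWinkler}, and the gauge-contractive extension is real-cpcc, hence cpcc.

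The ``only if'' direction, however, has a genuine gap which you flag but do not close, and the sketched repairs do not work as stated because the two gauges are assigned to the wrong roles. With your configuration --- $\phi$ the gauge-isometric representation of $(Z,\nu)$ and $j$ an embedding into a \emph{generic} operator system $S$ --- the extension property yields only $\nu_n(z)\leq \nu_e^n(j^{(n)}(z))\leq\nu_{max}^n(z)$, which is already contained in Proposition \ref{maxGauge} and says nothing about whether $\nu=\nu_{max}$. Your ``cleaner'' alternative (extend the ``identity-type'' map to get $\nu_{max}\leq\nu_e|_Z$) would only show that the particular restricted gauge $\nu_e^S|_Z$ equals $\nu_{max}$; it gives no control over the arbitrary inducing gauge $\nu$ unless $\nu$ is itself realized as such a restriction. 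The correct assignment is the reverse of yours: use Theorem \ref{mainThm} to embed $(Z,\nu)$ gauge-isometrically into some $B(H)$ and let $S$ be the operator system generated by the image, so that $\nu_e^S$ restricts to $\nu$ on $Z$ (one must also check that this $j$ is completely isometric and a real-complete order embedding, which holds precisely because $\nu$ induces $(Z,Z_{ac},h)$); then the real-cpcc map to be extended is the gauge-isometric representation of $(Z,\nu_{max})$ in $B(K)$. Any cpcc extension to $S$ is $\nu_e$-gauge contractive and therefore forces $\nu_{max}^n(z)\leq\nu_e^n(j^{(n)}(z))=\nu_n(z)$, which with Proposition \ref{maxGauge} gives $\nu=\nu_{max}$. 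This is exactly the configuration in the paper's proof, run there contrapositively: assuming $\nu\neq\nu_{max}$, one exhibits a positive element $te\otimes I_n - Re(j^{(n)}(x))$ of $S$ whose image under any completely contractive extension fails to be positive.
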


\begin{proof} Assume that $S$ is an operator system and that $j:Z \rightarrow S$ is a
completely isometric real-complete order embedding. Then $$Z_{ac}^n
= \{x \in M_n(Z) : Re(j^{(n)}(x)) \geq 0\}$$ and $$h_n(x) =
\|Re(j^{(n)}(x))\|_n.$$ Consequently, $\nu_e$ induces
$(Z,Z_{ac},h)$, where $\nu_e$ is the unique inducing gauge on $S$.
If $(Z,Z_{ac},h)$ has a unique inducing gauge, then $\nu_e^n(x) =
\nu_{max}^n(x)$ for all $x \in M_n(Z)$. Hence, every real-cpcc map
$\phi:Z \rightarrow B(K)$ has a completely positive completely
contractive extension to $S$, by Theorem 6.4 and Proposition 6.1.
Now, assume that $(Z,Z_{ac},h)$ has an inducing gauge $\nu \neq
\nu_{max}$. By Theorem \ref{mainThm}, there exist Hilbert spaces
$H,K$ and completely gauge isometric maps $\phi:(Z, \nu_{max})
\rightarrow B(K)$ and $j:(Z, \nu) \rightarrow B(H)$. Let $S$ be the
operator system generated by $j(Z)$ in $B(H)$. Note that $\phi$ is
real-cpcc by Proposition 6.1. However, any completely contractive
extension $\tilde{\phi}:S \rightarrow B(K)$ fails to be completely
positive. For, if $\tilde{\phi}$ is a completely contractive
extension of $\phi$ to $S$, then for each $x \in M_n(Z)$,
$$\nu_{I_K}^n(T_x) = \nu_{max}^n(x)$$ for $T_x = \tilde{\phi}^{(n)}(j^{(n)}(x))$. However, the assumption
that $\nu \neq \nu_{max}$ implies that there exists some $x \in
M_n(Z)$ such that $\nu_n(x) < \nu_{max}^n(x)$. Hence,
$\nu_{I_K}^n(T_x) > \nu_e^n(j^{(n)}(x)).$ So for
$\nu_e^n(j^{(n)}(x)) < t < \nu_{I_k}^n(T_x)$, we have $te
\otimes I_n - Re(j^{(n)}(x)) \geq 0$ in $S$, but
$t\tilde{\phi}(e) \otimes I_n - Re(T_x)$ is not positive in $B(K)$ since $tI_K \otimes I_n - Re(T_x)$ is not positive in $B(K)$ and $\tilde{\phi}(e)$ is a contraction. \end{proof}

We note that Arveson's extension theorem is a special case of the
above theorem, as it could be rephrased as ``every operator system
has the real-cpcc extension property".

\section{Final remarks and acknowledgments}

We conclude by noting that much more can be said concerning matrix
gauge representations of operator spaces. For example, objects such
as quotients or tensor products have not been addressed here. We
plan to study these concepts in a future paper. We refer the reader
to an earlier paper \cite{Russell1} for a few details concerning
quotients, as well as ``commutative" versions of several results
presented in this paper.

The author would like to thank the following individuals for helpful
comments and discussions related to this paper and its predecessor
\cite{Russell1}: David Blecher, Allan Donsig, Douglas Farenick,
Rupert Levene, Vern Paulsen, David Pitts, and Mark Rieffel, as well
as Mark Tomforde for making several fruitful conversations possible. The author also extends his gratitude to the Associate Editor and referees for their careful reading of this manuscript and their valuable suggestions.

\bibliographystyle{plain}
\bibliography{references_2}


\end{document}